\documentclass{amsart}

\usepackage{graphicx}
\usepackage{enumitem}
\usepackage{array}
\usepackage{arydshln}
\usepackage[foot]{amsaddr}
\usepackage{ dsfont }
\usepackage[usenames, dvipsnames]{xcolor}
\usepackage{url}
\usepackage{amsmath,amssymb,amsfonts}
\usepackage{mathrsfs}
\usepackage{bbm,upgreek}

\textheight23cm
\textwidth16cm
\oddsidemargin0pt
\evensidemargin0pt
\voffset-1cm

\newtheorem{theorem}{Theorem}[section]
\newtheorem{proposition}[theorem]{Proposition} 
\newtheorem{corollary}[theorem]{Corollary}
\newtheorem{lemma}[theorem]{Lemma}

\newtheorem{remark}{Remark}[section]

\newtheorem{algo}{Algorithm}

\newcommand{\ie}[0]{\textit{i.e.}}
\newcommand{\eg}[0]{\textit{e.g.}}

\def\<{\langle}
\def\>{\rangle}

\DeclareMathOperator{\R}{\mathbb{R}}

\DeclareMathOperator{\N}{\mathbb{N}}

\DeclareMathOperator{\bbS}{\mathbb{S}} 

\newcommand{\norm}[1]{\left\lVert#1\right\rVert}

\DeclareMathOperator{\supp}{supp}
\DeclareMathOperator*{\argmin}{argmin}
\DeclareMathOperator{\diag}{\mathrm{diag}}

\DeclareMathOperator{\sign}{sign}

\DeclareMathOperator{\prox}{prox}


\DeclareMathOperator{\RadonD}{\vec{\mathcal{R}}}
\DeclareMathOperator{\radonSD}{\mathcal{R}^{\text{sd}}}
\DeclareMathOperator{\radonSDth}{\mathcal{R}_{\theta}^{\text{sd}}}
\newcommand{\Npxl}{N_{\text{pxl}}}
\newcommand{\Ndtc}{N_{\text{dtc}}}
\newcommand{\Nth}{N_{\text{ref}}}
\newcommand{\Nsh}{N_{\text{sh}}}

\DeclareMathOperator{\sh}{SH}
\DeclareMathOperator{\shD}{{\bf SH}}


\newcommand{\mbB}{\mathbf{B}} 

\newcommand{\mcP}{\mathcal{P}}  

\newcommand{\msH}{\mathscr{H}} 
\newcommand{\msR}{\mathscr{R}}
\newcommand{\msSC}{\mathscr{SC}} 
\newcommand{\msSH}{\mathscr{S \!\! H}}  

\renewcommand{\vec}[1]{\boldsymbol{#1}} 

\newcommand{\f}{\vec{f}} 
\newcommand{\w}{\vec{w}} 
\newcommand{\vsdiff}{\vec{r}} 
\newcommand{\g}{\vec{g}} 
\newcommand{\Wop}{\vec{W}} 
\newcommand{\Mop}{\vec{M}} 
\newcommand{\x}{\vec{x}} 
\newcommand{\y}{\vec{y}} 
\newcommand{\z}{\vec{z}} 
\newcommand{\thetab}{\vec{\theta}} 
\newcommand{\gNd}{\g_N^\delta} 
\newcommand{\faNd}{\f_{\alpha,N}^\delta} 
\newcommand{\vi}{\vec{v}} 


\newcommand{\wtR}{\widetilde{R}}
\newcommand{\whR}{\widehat{R}}
\newcommand{\bR}{\breve{R}}

\newcommand{\imag}{\text{Im}}

\def\coef{c_{\lambda, p, s, d}}

\newcommand{\nsG}[1]{\left\|#1 \right\|_{\text{sG}}}
\newcommand{\bu}{{\bf u}}
\newcommand{\Abu}{A_{{\bf u}}}
\newcommand{\Sbu}{S_{{\bf u}}}
\newcommand{\E}{\mathbb{E}}
\def\fdan{f^\delta_{\alpha,N}}
\def\fdanp{f^{\delta,p}_{\alpha,N}}
\def\fdano{f^{\delta,1}_{\alpha,N}}
\def\sdiff{r}
\def\pdan{\sdiff^\delta_{\alpha,N}}
\def\Jdan{J^\delta_{\alpha,N}}
\def\Jdanp{J^{\delta,p}_{\alpha,N}}
\def\Jdano{J^{\delta,1}_{\alpha,N}}

\def\gdan{{\bf g}^\delta_N}
\newcommand{\LO}{\mathcal{L}}
\def\coef{c_{\lambda, p, s, d}}
\def\coefb{c_{\lambda, q, -s, d}}


\graphicspath{{./images/}}

\title{Shearlet-based regularization in statistical inverse learning with an application to X-ray tomography}
\author[T. A. Bubba]{Tatiana A. Bubba}
\email{tb715@cam.ac.uk}

\author[L. Ratti]{Luca Ratti}
\email{luca.ratti@unige.it}
\date{}

\address[T.~A.~Bubba]{Department of Applied Mathematics and Theoretical Physics, University of Cambridge, Cambridge, United Kingdom}

\address[L. Ratti]{Department of Mathematics, University of Genoa, Genoa, Italy}

\keywords{Convex regularization, statistical learning, X-ray tomography, wavelets, shearlets, convergence rates, Bregman distance}

\begin{document}

\maketitle

\begin{abstract}
Statistical inverse learning theory, a field that lies at the intersection of inverse problems and statistical learning, has lately gained more and more attention. 
In an effort to steer this interplay more towards the variational regularization framework, convergence rates have recently been proved for a class of convex, $p$-homogeneous regularizers with $p \in (1,2]$, in the symmetric Bregman distance.

Following this path, we take a further step towards the study of sparsity-promoting regularization and extend the aforementioned convergence rates to work with $\ell^p$-norm regularization, with $p \in (1,2)$, for a special class of non-tight Banach frames, called shearlets, and possibly constrained to some convex set. The $p = 1$ case is approached as the limit case $(1,2) \ni p \rightarrow 1$, by complementing numerical evidence with a (partial) theoretical analysis, based on arguments from $\Gamma$-convergence theory.  
We numerically demonstrate our theoretical results in the context of X-ray tomography, under random sampling of the imaging angles, using both simulated and measured data. 
\end{abstract}

\section{Introduction} 

The task of identifying an unknown quantity $f$ from point-wise evaluations of a related observable $g$, understood as the outcome of an indirect and possibly noisy measurement process, is a common ground between inverse problems and statistical learning.
This class of problems, which has recently gained more and more attention (see, for example, \cite{Blanchard18}), is generally referred to as \textit{statistical inverse learning problems}. As the name prompts, it blends learning theory together with inverse problems.

On the one hand, the usual formulation of (deterministic) inverse problems deals with the recovery of an unknown quantity $f$ from some measurements $g$, and the link between $f$ and $g$ is described by a mathematical model based on physical laws. In this paper, we consider a linear observation model (which, in turn, is capable of describing several inverse problems of interest, \eg{}, in medical imaging) and provide the following general formulation
\[
\textbf{(inverse problem)} \quad \textit{given the noisy measurements}\quad g^\delta: \quad g^\delta = A f + \varepsilon, \quad \textit{recover} \quad f,
\]
where $\delta$ describes the noise level, namely $\|\varepsilon\|_Y \leq \delta$.
We suppose $A\colon X \rightarrow Y$ where $X$ and $Y$ are Banach spaces: in typical applications, such a mathematical model is derived in a continuous setting, hence it is natural to consider $X$ and $Y$ as infinite-dimensional spaces. We suppose in particular that $Y$ is a function space, and more precisely a space of functions from $U$ to $V$ (possibly, finite-dimensional spaces). To fix ideas, one can consider X-ray tomographic imaging, which is our guiding example throughout this paper. In this case,
the unknown $f$ represents the X-rays attenuation in a physical object (hence, $X$ is a suitable space of functions on $\Omega \subset \R^d$, $d=2,3$), and the operator $A$ is the well-known Radon transform~\cite{Natterer01}, which associates $f$ with the sinogram $g$, that is, a collection of integrals of $f$ along lines with different angles and offsets. For future reference, the sinogram $g \in Y$ can be also considered as a function associating each angle $u \in U$ with the collection of line integrals  at that angle, and with several different offsets.

On the other hand, the traditional formulation of a (direct) statistical learning problem deals with the task of identifying an underlying function $g\colon U \rightarrow V$ from point-wise evaluations, possibly corrupted by noise:
\[
\textbf{(statistical learning problem)} \quad \textit{given a sample} \quad \{(u_n,v_n)\}_{n=1}^N : v_n = g(u_n)+\varepsilon_n, \quad \textit{recover} \quad g.
\]
In this context, we are not interested in inferring a causal, model-based relation between the variables $u$ and $v$, but rather in describing their connection. Nevertheless, to do so, we can take advantage of a statistical model for $u$ and $v$: in particular, we assume that $\{(u_n,v_n)\}_{n=1}^N$ is an i.i.d.~sample of the random variables $u$ and $v$, whose joint probability distribution is unknown. Moreover, we suppose to have access to some information regarding such a statistical model, and in particular the observation model $v = g(u) + \varepsilon$, the distribution of the noise variable $\varepsilon$ and the fact that the variable $u$ (whose distribution is in general unknown) is independent of $\varepsilon$. 

The definition of a statistical inverse learning problem lies at the intersection of the two previously introduced ones, and reads as follows:
\[
\textbf{(statistical inverse learning problem)} \quad \textit{given} \quad \{(u_n,v_n)\}_{n=1}^N : v_n = (Af)(u_n)+\varepsilon_n, \quad \textit{recover} \quad f.
\]
This task shows some additional difficulties with respect to both the deterministic inverse problems and the direct statistical learning problem, namely, the random nature of the (possibly scarcely-sampled) data and the ill-posedness of the measurement operator $A$. However, we can take advantage of both the statistical model for the noise and the mathematical model $g = Af$, thus combining strategies from both backgrounds to recover a stable reconstruction of $f$. We immediately underline one relevant difference between the two approaches, and one very important analogy.

The difference lies in the description of the noise. In the proposed formulation of inverse problem, denoted by $f^\dag$ the exact solution, the experimental error on the measurements is modeled by an element $\varepsilon \in Y$ added to the noise-free measurements $g^\dag=A f^\dag$. In order to relate this to a statistical description, $\varepsilon$ can be interpreted as a realization of a random variable, but the requirement that $\varepsilon$ belongs to $Y$ and $\| \varepsilon \|_Y \leq \delta$ is well suited only for some possible models (\eg{}, $\varepsilon$ is a uniform random variable), but not for many other models which are extremely common in application. If, for example, we were to consider the noise as a Gaussian variable, we would not have a theoretical guarantee on the boundedness of any single realization of $\varepsilon$, even though a noise with a very large norm might occur with extremely small probability. The treatment of the so-called \textit{large noise} has been the object of several studies in the last decade in the inverse problem literature. In particular, we take advantage of the approach and results obtained in~\cite{Burger18}, which essentially consists of relying on the theoretical tool defined as the \textit{approximate source conditions}.

On the bright side, both inverse problems and statistical learning problems share an important feature, that is the use of \textit{regularization}. We refer the interested reader to~\cite{Benning18,Engl96,Schuster12} for a general overview on variational regularization for inverse problem and to~\cite{Caponnetto07} for an advanced discussion on regularization in statistical learning, particularly related to the scope of this paper. Generally speaking, this technique consists of replacing the solution of the previously outlined problems by the minimization of a functional defined as the sum of a data mismatch term and a suitable regularization functional $R$. A regularization parameter $\alpha$ controls the balance between the two terms. The solution of such minimization problem is denoted by $f_\alpha^\delta$ in inverse problems (where $\alpha$ stresses the dependence on the choice of the regularization parameter and $\delta$ on the noise level) and by $g_{\alpha,N}$ in the statistical learning context (where $N$ denotes the size of the learning sample). In statistical inverse learning problems, it is therefore natural to make use of regularization as well, and to denote the regularized solution as $\fdan$, as it will be more precisely outlined in section~\ref{sec:SettingStage}.

A compelling matter, which is also the main focus of this paper, is the issue of \textit{convergence}. In inverse problems, this consists of ensuring that, for a suitable choice of $\alpha$, as the noise level $\delta \rightarrow 0$, the regularized solution $f_\alpha^\delta$ convergence to $f^\dag$ in a suitable metric in $X$. In statistical learning, this task is instead related to proving that, for a suitable choice of $\alpha$, the reconstructed function $g_{\alpha,N}$ approaches $g^\dag$ as $N \rightarrow \infty$. Since both quantities are random variables, the convergence is measured in mean, or more precisely by means of the expected value of a suitable metric in $Y$. As a consequence, when considering statistical inverse learning problems we are interested in providing a rule to choose $\alpha$ as a function of $\delta$ and $N$ so that $\fdan$ converges to $f^\dag$ with respect to the expected value of a suitable metric in $X$. Notice that, in this context, we can both consider a scenario in which the noise level $\delta$ reduces (as in inverse problems) or in which the sample size $N$ increases (as in statistical learning), or suitable combinations.

In this context, the most relevant and advanced results are reported in a recent paper by Blanchard and M\"{u}cke~\cite{Blanchard18}, where the authors derive optimal convergence rates for a large class of regularization functionals $R$. More recently, in~\cite{Bubba21} such discussion has been extended to a class of convex, $p$-homogeneous regularizers, which are of great interest for applications. Indeed, 
in the latest decade a common paradigm to solve tomographic inverse problems has been variational regularization with penalties other than the usual quadratic ones, and in particular sparsity-enforcing priors. The results in~\cite{Bubba21} connect the recent developments in convex regularization for inverse problems with the framework of inverse statistical learning. A crucial aspect is, for example, the use of the (expected) Bregman distance as a tool to quantify convergence under some source condition assumption on the solution. The estimates in~\cite{Bubba21}
are not proven to be optimal in a minimax sense and leave out the most interesting $p$-homogenous case, $p = 1$. Nevertheless, such results establish a first step towards the study of sparsity-promoting regularization terms: in particular, by means of techniques based on Besov spaces, they allow to treat the case in which $R(f)$ is the $\ell^p$-norm of the wavelet coefficients of $f$, with $1 < p \leq 2$. Moreover, the theoretical estimates are numerically demonstrated with simulated tomographic data.

In recent years, though, the trend has been to model the task of image reconstruction with more general sparsifying transforms than wavelets and mostly using $\ell^1$-minimization. In particular, \textit{shearlets}~\cite{Kutyniok12}, a representation system specifically designed for multivariate data with anisotropic features, often predominant in images, have already been very successfully applied to various inverse problems, including X-ray tomography.
There is a vast literature on this topic (see subsection~\ref{ssec:LitRev}) for the deterministic framework, but no results that bridge the gap between statistical learning and inverse problems in the general case of frame-based $\ell^p$-minimization, with $1 \leq p < 2$,  possibly constrained to some convex set.

The goal of this paper is, therefore, to extend the results in~\cite{Bubba21} to work with a sparsity-enforcing model which is not limited to working with orthonormal bases for the sparsifying system, and applies in particular to shearlet-based regularization, thanks to powerful theory of shearlet coorbit spaces~\cite{Dahlke12}. In addition, our model allows to work also with randomized tomographic data where generally a non-negativity constraint is included in the minimized functional. 
These results constitute an even further step towards studying state-of-the-art sparsity-promoting regularization.

\subsection{Related research in the literature} 
\label{ssec:LitRev}
Our work lies at the intersection of convex regularization and statistical learning in the context of tomographic inverse problems. Apart from~\cite{Bubba21} which constitutes the foundation for our work (see also references therein), general convex regularization is not considered in statistical learning problems, which focuses more on deriving minimax optimal convergence rates, in a suitable metric, for Lasso-like problems~(see, for example, \cite{Blanchard18,Caponnetto07,Devito06} and references therein or~\cite{Hastie2015} for an overview).

Conversely, the sparsifying effect of Tikhonov regularization in terms of the $\ell^p$-norm, with $0 \leq p \leq 2$ has been an active field of research over the last two decades in the inverse problems field, often leading to state-of-the-art results. In particular, the derivation of convergence rates, in various error metrics, has been the object of study in a number of papers. The most relevant to our framework are \cite{Lorenz08}, which considers $p \in [0,2]$, \cite{Grasmair08} where $p \in [1,2]$,  and~\cite{Burger04,Grasmair11,Grasmair11bis,Haltmeier12}, which instead focus on $p=1$, that is, $\ell^1$ regularization.
An in-depth discussion on these papers is carried out in subsection~\ref{ssec:BoundsError}. Incidentally, notice that in these paper the theoretical estimate are generally not demonstrated with numerical simulations.

Under the label \textit{compressed sensing} many powerful recovery guarantees for subsampled random measurements have been derived~\cite{Candes2006,Foucart13}. 
It is generally well-understood that the theoretical guarantees of compressed sensing do not straightforwardly apply to tomographic imaging~\cite{Kaganovsky14,Petra14}. In an effort to close the gap between theory and practice, J{\o}rgensen, Sydky et al.~systematically studied in a series of works~\cite{Jorgensen15tris,Jorgensen15,Jorgensen15bis,Jorgensen13} how recoverability depends on sparsity and sampling. In addition to stardard $\ell^1$-norm regularization, the authors consider total variation (TV) regularization, which enforces gradient sparsity, and consider also real measured data. Their drive to approach a realistic tomographic setting has been of inspiration also for our work, particularly in section~\ref{sec:conclusions}. 

Sparse regularization methods from fewer tomographic measurements than usually required by standard methods (like filtered-back projection) have been widely used in tomographic reconstructions.
Beside the already mentioned TV, also wavelets~\cite{Klann15,Lassas09,Loris06}, curvelets~\cite{Candes00,Frikel13} and - the most relevant for our work - shearlets~\cite{Bubba18,Bubba20,Colonna10,Riis18} have been successfully applied. All these papers deal with deterministic inverse problems, and the relevance to our work is mostly numerical, for their use of sparse regularization to compensate for the subsampled data regime.

Finally, the theory of shearlet coorbit spaces has been developed in a series of paper~\cite{Dahlke09,Dahlke10,Dahlke11}, and traces back to the fundamental work of Feichtinger and Gr\"{o}chenig~\cite{Feichtinger89,Feichtinger89bis}.

\subsection{Our contribution} 
\label{ssec:OurContr}
Our main contributions are as follows:
\begin{itemize}
    \item In theorem~\ref{thm:general_rate_nonneg}, we derive convergence rates on the symmetric Bregman distance for the regularization term $\wtR(f) = \frac{1}{p} \norm{f}_X^p + \iota_+(f)$ where $1<p<2$, $X$ is the Besov space $B_p^s$ and $\iota_+$ is the indicator function of the non-negativity constraint. Under suitable assumptions, our rates coincide with those of the unconstrained case proved in~\cite{Bubba21}, for both noise regimes.
    \item Theorem~\ref{thm:general_rate_shearlet}, which is our main result, combines some results from the theory of shearlet Banach frames with  theorem~\ref{thm:general_rate_nonneg}. In details, for both noise regimes, we prove the same convergences rates on the symmetric Bregman distance (of theorem~\ref{thm:general_rate_nonneg}) for the regularization term $\wtR(f) = \frac{1}{p} \norm{f}_X^p + \iota_+(f)$, where now $X$ is the shearlet coorbit space $\msSC_{p,m}$ and $1<p<2$. We conjecture that this result could be useful to prove convergence rates also for other frames, under specific assumptions discussed in remark~\ref{rm:BanachFramesExt}.
    \item The convergence rates predicted by our theoretical results are demonstrated by studying the classical inverse problem of X-ray tomography under random sampling of the imaging angles. Our numerical simulations use both Besov and shearlet coorbit space penalties, with $p \in (1,2)$. In sections~\ref{sec:NonnegConstr}, \ref{sec:shearlets} and \ref{sec:Approachingp1} we use simulated data, while in section~\ref{sec:conclusions} we also consider measured data.
    \item In section~\ref{sec:Approachingp1} we leverage numerical intuition, in the case of shearlet coorbit space penalties, to show that the case $p=1$ can be approached  as the limit case $(1,2) \ni p \rightarrow 1$. In particular, in corollary~\ref{cor:minim_converg} we use an argument base on $\Gamma$-convergence to show  the convergence of the $\ell^p$-norm regularized solutions to the $\ell^1$-norm one. This result is especially relevant in the numerical setting, as we outline at the end of subsection~\ref{ssec:Gconv}.
\end{itemize}
Following the path paved by~\cite{Bubba21}, we do not derive any lower bounds and we do not prove any minimax-optimality of our results. As we briefly discuss in section~\ref{sec:conclusions}, in line with~\cite{Bubba21,Burger18} similar techniques can lead to minimax-optimal rates in inverse problems when considered against suitable source conditions.

\subsection{Organization of the paper} 
The remaining of this paper is organized as follows. In section~\ref{sec:SettingStage} we set the notation and  review the main results in~\cite{Bubba21}. The semidiscrete Radon transform is introduced in subsection~\ref{ssec:radon}, and the discrete formulation is described in subsection~\ref{ssec:discretization}.
In section~\ref{sec:NonnegConstr} and \ref{sec:shearlets} we extend the convergence rates proven in~\cite{Bubba21} for $1<p<2$ to the case of constrained shearlet-based regularization, for $1<p<2$. In particular, in section~\ref{sec:NonnegConstr} we prove that the concentrations rates derived in~\cite{Bubba21} for the unconstrained case hold true also when the solution is constrained to the non-negative orthant, in the case of Besov priors.  In section~\ref{sec:shearlets} we extend this result to the case of (constrained) shearlet-based regularization, which is our main result. Then, in section~\ref{sec:Approachingp1}, we move to the case $p=1$: here, some numerical examples are used as stepping stone for further theoretical considerations. Moreover, we make use of $\Gamma$-convergence to show that the regularized solution associated with the
case $p = 1$ can be approached by the regularized solution of $\ell^p$-norm problem when $p \rightarrow 1$. 
Rather than collecting all numerical simulations in a dedicated section, we present them little by little in each section. In particular, numerical simulations for $p \in (1,2)$ and with constrained Besov regularization are in subsection~\ref{ssec:NonnegConstr_numerics}, while those with constrained shearlet-based regularization and $p \in [1,2)$ are collected in subsections~\ref{ssec:ShearletsNumerics} and~\ref{ssec:TMconv}.  Closing remarks and an outlook to the remaining open problems are given in section~\ref{sec:conclusions}.
Finally, in appendix~\ref{app:VMILA}, we explain the practical implementation of VMILA within our numerical framework.

\section{Setting the stage}
\label{sec:SettingStage}
In this section, we set the notation and give a brief overview of some key results from~\cite{Bubba21} which will be useful in the rest of the paper. We also introduce the guiding application for the numerical study: X-ray tomography.
Our assumptions and notations are closely aligned with those of~\cite{Bubba21}.

\subsection{Notations and assumptions}
\label{ssec:ContFramework}
Let us consider a linear inverse problem
\begin{equation}
	\label{eq:IP}
	g = A f,
\end{equation}
where $A: X \rightarrow Y$ is a bounded linear operator between a separable Banach space $X$ and a Hilbert space $Y$.  Consider $Y = L^2(U;V)$, where $U \subset \R^d$ and $V$ is a Hilbert space. Here, we assume that the range of the operator $A$ is contained in a Banach space $Z \subset Y$ such that $Z \subset \mathcal{C}(U;V)$ continuously, and that $A : X \to Z$ is continuous. Notice that $\mathcal{C}(U;V)$ is a Banach space and  $Z \subset Y \subset Z^*$ forms a Gelfand triplet.
The noiseless observation $g^\dag = A f^\dag$ corresponds to our ground truth $f^\dag \in X$. In the following we study properties of a regularized solutions $\fdan$ defined via the variational problem
\begin{equation}
	\label{eq:regularized_sol_R}
	\fdan \in  \argmin_{f\in X} \Jdan(f) := \argmin_{f\in X} \left\{ \frac 12 \norm{\Abu f - \gdan}^2_{V_N} + \alpha R(f) \right\}
\end{equation}
where $V_N$ is the product space $\bigoplus_{n=1}^N V$ equipped with the norm induced by the inner product
\[
\<\mathbf{v},\mathbf{\tilde{v}}\>_{V_N} = \frac{1}{N} \sum_{n=1}^N \< v_n,\tilde{v}_n\>_V
\]
and for any $\mathbf{u} = (u_n)_{n=1}^N \in U_N = \bigoplus_{n=1}^N U$ we define the sampling operator $\Abu \in \LO(Z, V_N)$ such that
\begin{equation}
	\Abu f = (A_{u_n}f)_{n=1}^{N} 
	       = \big( (Af)(u_n) \big)_{n=1}^N
\label{eq:SamplingOperator}	       
\end{equation}
with $A_u$ satisfying the following conditions:
\begin{itemize}
\item[(A1)] there exists $\kappa\leq 1$ such that for all $u\in U$ and for all $f \in X$ we have
$\norm{A_u f}_V \leq \kappa \norm{f}_X$;
\item[(A2)] the mapping	$u \mapsto (Af)(u)$ is measurable for all $f\in X$.
\end{itemize}
In addition, 
\begin{equation}
\label{eq:noisydata}
\gdan := \Abu f^\dag + \delta\epsilon_N    
\end{equation}
where $\delta>0$ is the noise level, $\epsilon_N = (\epsilon_N^n)_{n=1}^N \in V_N$ is a random variable such that $\epsilon_N^n\sim \epsilon$ i.i.d.~with zero-mean and satisfies
\begin{equation}
\label{eq:randomnoise}
\E \big[\norm{\epsilon}_{V}^l \big] < \frac{l!}{2} \, \Sigma^{l-2}
\end{equation}
for all $l\geq 2$ and some constant $\Sigma>0$.
In the following, we will consider the design points $\{u_n\}_{n=1}^N$ as a random sample drawn from a probability distribution $\mu$ on $U$, independent of the noise distribution $\epsilon$. The measure $\mu$ allows to define the space $Y_\mu = L^2(U, \mu; V)$ associated with the inner product
\begin{equation*}
	\< g_1, g_2 \>_{Y_\mu} := \int_U \< g_1(u), g_2(u) \>_V \mu(du).
\end{equation*}
Clearly, $Z \subset {\mathcal C}(U; V) \subset Y_\mu$ and we denote by $A_\mu: X \rightarrow Y_\mu$ the operator such that
\begin{equation*}
	\< A_\mu f, g \>_{Y_\mu} = \int_{U} \< (Af)(u),g(u)\>_V \mu(du) \qquad \forall g \in Y_{\mu}.
\end{equation*}
As a simple example, one can consider $\mu$ as a uniform distribution on a bounded domain $U$: in this case, $\mu$ is a continuous distribution associated with a (constant) density equal to $1/|U|$. Therefore, the inner products of $Y$ and $Y_\mu$ only differ by a multiplicative constant $1/|U|$ and $A_\mu = 1/|U| A$.

The convex functional $R : X \to \R \cup \{ \infty\}$ satisfies the following three condition:
\begin{itemize}
\item[(R1)] the functional $R$ is lower semicontinuous in some topology $\mathscr{T}$ on $X$;
\item[(R2)] the sublevel sets $M_b = \{R \leq b\}$ are sequentially compact in the topology $\mathscr{T}$ on $X$;
\item[(R3)] the convex conjugate $R^\star$ is finite on a ball in $X^*$ centered at zero.
\end{itemize}

Notice that, contrary to~\cite{Bubba21}, we do not require the symmetry condition $R(-f)=R(f)$ for all $f \in X$. As already noted in~\cite{Bubba21}, it is not necessary, and, there, it was only employed to make the results more accessible.

The optimality criterion associated with \eqref{eq:regularized_sol_R} is given by
\begin{equation}
	\label{eq:optimality_criterion}
	\Abu^* ( \Abu \fdan - \gdan) + \alpha \pdan = 0
\end{equation}
for $\pdan \in \partial R(\fdan)$, where $\partial R$ denotes the subdifferential:
\begin{equation*}
	\partial R(f) = \{\sdiff \in X^* \; | \; R(f)-R(\tilde f) \leq \langle \sdiff, f-\tilde f\rangle_{X^* \times X} \; \text{for all} \; \tilde f \in X\}.
\end{equation*}
Moreover, for $\sdiff_f\in \partial R(f)$ and $\sdiff_{\tilde f} \in \partial R(\tilde f)$ we define the symmetric Bregman distance between $f$ and $\tilde f$ as
\begin{equation} \label{eq:BregDist}
	D^{\sdiff_f, \sdiff_{\tilde f}}_R (f,\tilde f) = \langle \sdiff_f-\sdiff_{\tilde f}, f-\tilde f\rangle_{X^*\times X}.
\end{equation}

In the following we consider $R(f) = \frac{1}{p}\norm{f}_X^p$ with $1 < p <2$, which is convex, $p$-homogeneous and differentiable. In addition, in this case there is a unique element $\sdiff_f\in \partial R(f)$: therefore, we drop the dependence on the subgradients in the notation of the symmetric Bregman distance and write simply $D_R(f,\tilde f)$.

\subsection{Compendium of useful preliminary results}
\label{ssec:PrelRes}
We report here results from~\cite{Bubba21} which we will later adjust to work with our different set up. Since we are considering the special case of $R(f) = \frac{1}{p}\norm{f}_X^p$ with $1 < p <2$, assumptions (R1)-(R3) are always satisfied and, therefore, we omit these hypotheses from the statements below.

We start with two results that allow to derive general bounds on the regularization term $R(\fdan)$ and the symmetric Bregman distance between $\fdan$ and $f^\dag$.

\begin{proposition}[proposition 3.1, \cite{Bubba21}]
\label{prop:apriori}
The functional $\Jdan$ has a unique minimizer, $\fdan \in X$, which satisfies
\begin{equation}
\label{eq:apriori2}
	R(\fdan) \leq C\left(R(f^\dag) + \left(\frac{\delta}\alpha\right)^{\frac{p}{p-1}} R^\star( \Abu^* \epsilon_N)\right),
\end{equation}
for some constant $C>0$.
\end{proposition}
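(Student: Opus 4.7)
My plan is to prove existence and uniqueness by the direct method of the calculus of variations, and then obtain the a priori bound by comparing the minimization value at $\fdan$ with the value at $f^\dag$ and using a scaled Fenchel--Young inequality that exploits the $p$-homogeneity of $R$.

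For the first part, the quadratic data fidelity is continuous and convex, while $R(f) = \tfrac{1}{p}\|f\|_X^p$ is strictly convex, coercive, $\mathscr{T}$-lower semicontinuous by (R1), and has $\mathscr{T}$-sequentially compact sublevel sets by (R2). The direct method then yields a minimizer, and strict convexity of $R$, inherited by $\Jdan$, gives uniqueness.

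For the a priori bound, from $\Jdan(\fdan) \leq \Jdan(f^\dag)$ and the data model $\gdan = \Abu f^\dag + \delta\epsilon_N$, I would expand the squared norm, cancel the common term $\tfrac{1}{2}\delta^2\|\epsilon_N\|_{V_N}^2$ on both sides, and discard the nonnegative residual $\tfrac{1}{2}\|\Abu(\fdan - f^\dag)\|_{V_N}^2$ to arrive at
\[
\alpha R(\fdan) \leq \alpha R(f^\dag) + \delta \langle \fdan - f^\dag,\, \Abu^*\epsilon_N\rangle_{X\times X^*}.
\]
Since $R$ is $p$-homogeneous, $R^\star$ is $q$-homogeneous with $q = p/(p-1)$, so for any $\lambda>0$ a scaled Fenchel--Young inequality gives
\[
\langle h,w\rangle \leq \lambda^p R(h) + \lambda^{-q} R^\star(w).
\]
I would apply this once with $(h,w) = (\fdan,\, \delta\Abu^*\epsilon_N)$ and $\lambda^p = \alpha/2$, so that half of $\alpha R(\fdan)$ is absorbed on the left-hand side, and once with $(-f^\dag,\, \delta\Abu^*\epsilon_N)$ and $\lambda^p = \alpha$, so that the resulting term merges cleanly with the existing $\alpha R(f^\dag)$. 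Using $R^\star(\delta w) = \delta^q R^\star(w)$ and rearranging leads to
\[
\frac{\alpha}{2} R(\fdan) \leq 2\alpha R(f^\dag) + C\, \alpha^{-q/p} \delta^q R^\star(\Abu^*\epsilon_N).
\]
Dividing by $\alpha/2$ and using the identity $1 + q/p = q$ (which is simply $1/p + 1/q = 1$ rewritten), the factor $\alpha^{-1-q/p}\delta^q$ collapses to $(\delta/\alpha)^q = (\delta/\alpha)^{p/(p-1)}$, yielding the claim.

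The delicate step is the choice of the two scaling parameters: they must be compatible so that (i) the $R(\fdan)$ contribution is absorbable, (ii) the coefficient in front of $R(f^\dag)$ remains an absolute constant rather than blowing up as $\alpha \to 0$, and (iii) the factor multiplying $R^\star(\Abu^*\epsilon_N)$ reduces to exactly $(\delta/\alpha)^{p/(p-1)}$. The identity $q(p-1)=p$ is precisely what makes all three requirements simultaneously achievable.
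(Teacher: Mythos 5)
Your argument is correct and reproduces essentially the same proof as the paper's source: the statement is quoted from \cite[proposition 3.1]{Bubba21} rather than re-proved here, and the underlying argument is exactly your comparison $\Jdan(\fdan)\le\Jdan(f^\dag)$ followed by a scaled Fenchel--Young inequality exploiting the $p$-homogeneity of $R$ and the $q$-homogeneity of $R^\star$ (the paper's own proof of the constrained analogue, proposition~\ref{prop:apriori_nonneg}, confirms that this is the mechanism), with the exponent bookkeeping $1+q/p=q$ handled correctly. The only point worth flagging is that strict convexity of $\tfrac1p\norm{\cdot}_X^p$, and hence uniqueness of the minimizer, requires the norm of $X$ to be strictly convex; this holds for the $\ell^p$-type Besov and coorbit norms with $1<p<2$ used throughout, but is not automatic for an arbitrary Banach space.
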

Notice that \cite[proposition 3.1]{Bubba21} provides two different \textit{a priori} bounds. We only report the second one, which is the relevant one for our purposes. The next result is instead derived from \cite[proposition 3.2]{Bubba21}, where for simplicity we consider the choice $\Gamma_1 = \gamma_1 I$ and $\Gamma_2 = \gamma_2 I$, where $\gamma_1, \gamma_2 \in \R$ are positive constants and $I$ is the identity operator. 

\begin{proposition}[proposition 3.2, \cite{Bubba21}]
\label{prop:aux_convex2}
The regularized solution $\fdan$ given by \eqref{eq:regularized_sol_R} satisfies
\begin{multline}
	\label{eq:aux_breg2}
	D_R(\fdan, f^\dag) \
	\leq  \inf_{\bar{w} \in V_N} \left(R^\star\left(\frac{1}{\gamma_1}(\sdiff^\dag - \Abu^* \bar w)\right) + \frac \alpha 2 \norm{\bar w}_{V_N}^2\right) + R(\gamma_1(f^\dag - \fdan)) \\
	 + \frac{1}{\alpha} \left(R^\star\left(\frac{\delta} {\gamma_2} \Abu^* \epsilon_N\right)  +  R\left(\gamma_2(f^\dag - \fdan)\right)\right),
\end{multline}
where $\sdiff^\dag = \nabla R(f^\dag)$ and $\gamma_1, \gamma_2 \in \R$ are positive constants.
\end{proposition}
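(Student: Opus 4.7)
The plan is to start directly from the definition of the symmetric Bregman distance, substitute the subgradient via the optimality condition~\eqref{eq:optimality_criterion}, and then decompose the resulting expression so that Fenchel--Young and a scalar Young inequality can be applied componentwise. First, since $R(f)=\frac{1}{p}\|f\|_X^p$ is differentiable, the subgradient of $R$ at $\fdan$ is the unique element $\sdiff^{\delta}=-\frac{1}{\alpha}\Abu^*(\Abu \fdan-\gdan)$, so that writing $\gdan=\Abu f^\dag+\delta\epsilon_N$ gives
\[
D_R(\fdan,f^\dag)=\langle \sdiff^{\delta}-\sdiff^\dag,\fdan-f^\dag\rangle = -\frac{1}{\alpha}\|\Abu(\fdan-f^\dag)\|_{V_N}^2+\frac{1}{\alpha}\langle \delta\Abu^*\epsilon_N,\fdan-f^\dag\rangle+\langle \sdiff^\dag,f^\dag-\fdan\rangle.
\]
This identity is the backbone of the proof; the rest is a matter of bounding the last two inner products in terms of quantities depending on $R^\star$ and $R$ with the help of the free scaling parameters $\gamma_1,\gamma_2$.

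Next I would introduce the auxiliary element $\bar w\in V_N$ by the trivial decomposition $\sdiff^\dag=(\sdiff^\dag-\Abu^*\bar w)+\Abu^*\bar w$, which splits $\langle\sdiff^\dag,f^\dag-\fdan\rangle$ into a ``source-condition gap'' $\langle\sdiff^\dag-\Abu^*\bar w,f^\dag-\fdan\rangle$ plus a discrepancy contribution $\langle\bar w,\Abu(f^\dag-\fdan)\rangle_{V_N}$. To the first piece I would apply the Fenchel--Young inequality with scale $\gamma_1>0$, namely
\[
\langle \sdiff^\dag-\Abu^*\bar w,f^\dag-\fdan\rangle=\Big\langle \tfrac{1}{\gamma_1}(\sdiff^\dag-\Abu^*\bar w),\gamma_1(f^\dag-\fdan)\Big\rangle\le R^\star\!\Big(\tfrac{1}{\gamma_1}(\sdiff^\dag-\Abu^*\bar w)\Big)+R\bigl(\gamma_1(f^\dag-\fdan)\bigr),
\]
and to the noise term the analogous bound with scale $\gamma_2>0$, exploiting that for $R=\frac{1}{p}\|\cdot\|_X^p$ one has $R(-x)=R(x)$ so that the sign in $\fdan-f^\dag$ can be flipped freely. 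The cross term $\langle \bar w,\Abu(f^\dag-\fdan)\rangle_{V_N}$ is handled by the scalar Young inequality
\[
\langle \bar w,\Abu(f^\dag-\fdan)\rangle_{V_N}\le \tfrac{\alpha}{2}\|\bar w\|_{V_N}^2+\tfrac{1}{2\alpha}\|\Abu(\fdan-f^\dag)\|_{V_N}^2,
\]
which is designed precisely so that the quadratic piece cancels (up to a factor one half) with the negative term $-\tfrac{1}{\alpha}\|\Abu(\fdan-f^\dag)\|_{V_N}^2$ coming from the optimality identity. Collecting everything and dropping the remaining non-positive term yields the bound \eqref{eq:aux_breg2} for an arbitrary $\bar w\in V_N$; since the left-hand side is independent of $\bar w$, one finally passes to the infimum over $\bar w$.

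The conceptually delicate point is the choice of the splitting $\sdiff^\dag=(\sdiff^\dag-\Abu^*\bar w)+\Abu^*\bar w$ together with the calibrated Young inequality on the cross term: it is this calibration that produces the $\frac{\alpha}{2}\|\bar w\|_{V_N}^2$ contribution in the final bound and, more importantly, cancels the squared data-fidelity term so that no unbounded quantity survives on the right-hand side. Everything else is bookkeeping: carrying along the scaling constants $\gamma_1,\gamma_2$ through Fenchel--Young and relying on the $p$-homogeneity (hence symmetry) of $R$ so that $R(\gamma_2(\fdan-f^\dag))$ and $R(\gamma_2(f^\dag-\fdan))$ coincide. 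No assumption on the statistics of $\epsilon_N$ is used at this stage, which is why the proposition holds in the strong (pointwise) form stated, and the expectation-level estimates only enter when the term $R^\star(\delta\Abu^*\epsilon_N/\gamma_2)$ is eventually averaged in subsequent results.
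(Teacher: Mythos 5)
Your proposal is correct and follows essentially the same route as the paper's argument (reproduced in the proof of proposition~\ref{prop:aux_convex2_nonneg}): testing the optimality criterion~\eqref{eq:optimality_criterion} against $\fdan-f^\dag$ to obtain the identity $\norm{\Abu(\fdan-f^\dag)}_{V_N}^2+\alpha D_R(\fdan,f^\dag)=\alpha\langle \sdiff^\dag,f^\dag-\fdan\rangle+\delta\langle\Abu^*\epsilon_N,\fdan-f^\dag\rangle$, then splitting $\sdiff^\dag=(\sdiff^\dag-\Abu^*\bar w)+\Abu^*\bar w$, applying Fenchel--Young with the scalings $\gamma_1,\gamma_2$, and calibrating Young's inequality on the cross term so that the data-fidelity square is absorbed. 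The only cosmetic difference is that you start from the Bregman distance and substitute the subgradient rather than pairing the optimality condition directly, which is the same computation.
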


The following lemma, derived from the Xu-Roach's inequalities~\cite{Xu91}, is a key ingredient for the convergence analysis when $R(f) = \frac{1}{p}\norm{f}_X^p$.

\begin{lemma}[lemma 4.1, \cite{Bubba21}]
\label{lem:xuroach}
Let $f,\tilde f\in X$. For $1<p<2$ it holds that
\begin{equation*}
	\gamma^p R(f-\tilde f) \leq C \left(1-\frac p2\right) \gamma^{\frac{2p}{2-p}} \max\left\{R(f), R(\tilde f)\right\} + \frac p2 D_R(f,\tilde f),
\end{equation*}
for some $C>0$ depending on $p$ with any $\gamma>0$.
\end{lemma}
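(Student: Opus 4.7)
The plan is to reduce the lemma to the Xu--Roach inequality for $p$-uniformly smooth Banach spaces combined with a carefully tuned Young's inequality. Under the choice $R(f) = \frac{1}{p}\|f\|_X^p$ with $1 < p < 2$, the subdifferential is single-valued and coincides with the duality map $J_p$ of gauge $t \mapsto t^{p-1}$, so $D_R(f,\tilde f) = \langle J_p(f) - J_p(\tilde f), f - \tilde f\rangle$. The relevant Xu--Roach bound (valid, in particular, for Besov spaces $B_p^s$ and, by frame-induced isomorphism with weighted $\ell^p$ sequence spaces, for the shearlet coorbit spaces $\msSC_{p,m}$, all of which are $p$-uniformly smooth for $1<p<2$) reads, after dividing by $p$ and using $\max\{\|f\|_X^p,\|\tilde f\|_X^p\} = p\max\{R(f),R(\tilde f)\}$,
\begin{equation*}
R(f - \tilde f) \leq K \, \max\{R(f), R(\tilde f)\}^{(2-p)/2} \, D_R(f, \tilde f)^{p/2},
\end{equation*}
for a constant $K$ depending only on $p$.

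The rest is one-line bookkeeping. Multiply both sides by $\gamma^p$ and apply Young's inequality, with conjugate exponents $r = 2/(2-p)$ and $r' = 2/p$ (so that $1/r + 1/r' = 1$), to the product
\begin{equation*}
u \cdot v := \bigl(K \gamma^p \max\{R(f), R(\tilde f)\}^{(2-p)/2}\bigr) \cdot D_R(f, \tilde f)^{p/2}.
\end{equation*}
This yields $uv \leq \tfrac{2-p}{2} u^{2/(2-p)} + \tfrac{p}{2} v^{2/p}$, and after simplifying the two exponents one obtains exactly
\begin{equation*}
\tfrac{2-p}{2}\, K^{2/(2-p)} \, \gamma^{2p/(2-p)} \, \max\{R(f), R(\tilde f)\} \,+\, \tfrac{p}{2} D_R(f, \tilde f).
\end{equation*}
Setting $C = K^{2/(2-p)}$ and using $(2-p)/2 = 1 - p/2$ produces the stated inequality.

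The hardest step — and the place where the $p$-dependence of $C$ is really baked in — is invoking the correct Xu--Roach inequality, which requires the underlying norm to be $p$-uniformly smooth with modulus of the right order. The design of the product $u \cdot v$ is also what makes the scheme work: placing the entire $\gamma^p$-factor inside $u$ forces the Young splitting to produce exactly a $\gamma^{2p/(2-p)}$ weight on the $\max\{R(f),R(\tilde f)\}$-term and no $\gamma$-dependence on the $D_R$-term, which is precisely the form required by the downstream convergence-rate arguments that rely on this lemma.
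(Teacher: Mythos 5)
Your argument is correct and follows the same route the paper points to for this lemma (which is quoted from \cite{Bubba21} and only said there to be ``derived from the Xu--Roach inequalities''): a characteristic inequality for the Bregman distance of $\frac1p\|\cdot\|_X^p$, followed by Young's inequality with conjugate exponents $2/(2-p)$ and $2/p$. The exponent bookkeeping and the decision to place the whole $\gamma^p$ inside the factor $u$ are exactly right. One correction on what you yourself flag as the hardest step: the intermediate bound $R(f-\tilde f)\leq K\,\max\{R(f),R(\tilde f)\}^{(2-p)/2}D_R(f,\tilde f)^{p/2}$ is equivalent to the lower bound $D_R(f,\tilde f)\gtrsim \|f-\tilde f\|_X^{2}\,\max\{\|f\|_X,\|\tilde f\|_X\}^{p-2}$, and this is the Xu--Roach characteristic inequality of \emph{uniform convexity of power type $2$}, not of $p$-uniform smoothness. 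The smoothness-side Xu--Roach inequality yields an \emph{upper} bound of the form $D_R(f,\tilde f)\lesssim \|f-\tilde f\|_X^{p}$, which points the wrong way for this lemma. The spaces at stake ($\ell^p$, $B_p^s$ with the equivalent wavelet norm, and $\msSC_{p,m}$ identified with $\ell^p_m$ via the Banach frame) are simultaneously $p$-smooth and $2$-convex for $1<p<2$, so the inequality you actually wrote down is true and the rest of the proof goes through; only the property you credit it to is the wrong one, and someone following your pointer to the literature would retrieve the wrong estimate.
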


To derive convergence rates, it is common practice in inverse problems to assume some source condition on the solution. As an alternative, we introduce the following object
\begin{equation}
\label{eq:Rbeautiful}
	\msR(\beta, \bu; f^\dag) = \inf_{\bar w\in V_N} 
	\bigg\{
	R^\star\left(\sdiff^\dag - \Abu^* \bar w\right) + \frac \beta 2 \norm{\bar w}_{V_N}^2
	\bigg\}
\end{equation}
where $\sdiff^\dag = \nabla R(f^\dag)$. As we will show later, via $\msR$ we can state a more general requirement to prove convergence rates. 

The following results combines all the previous estimates to provide a deterministic bound on the Bregman distance between $\fdan$ and $f^{\dag}$.

\begin{theorem}[theorem 4.3, \cite{Bubba21}]
\label{thm:bregman_dist_gen}
The regularized solution $\fdan$ given by \eqref{eq:regularized_sol_R} satisfies
the following inequality:
\begin{equation}
\label{eq:bregman_dist_gen2}
	D_R(\fdan, f^\dag) 
	\leq   \widetilde C_p\left[\gamma_1^{-q} \msR(\alpha \gamma_1^q, \bu; f^\dag)  + 
	 H(\alpha, \delta, \gamma_1, \gamma_2) R^\star(\Abu^* \epsilon_N) +
	 \left(\gamma_1^p	+ \frac{\gamma_2^p}{\alpha}\right)^{\frac{2}{2-p}} R(f^\dag)\right]
\end{equation}
for arbitrary $\gamma_1,\gamma_2 >0$, where $\widetilde C_p>0$ is a constant dependent on $p$, $q$ is the H\"{o}lder conjugate of $p$ and 
\begin{equation}
	\label{eq:paramH}
	H(\alpha, \delta, \gamma_1, \gamma_2) = \frac{\delta^q}{\alpha \gamma_2^q} + \left(\gamma_1^p	+ \frac{\gamma_2^p}{\alpha}\right)^{\frac{2}{2-p}} \left(\frac \delta \alpha\right)^{q}.
\end{equation}
\end{theorem}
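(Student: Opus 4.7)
The plan is to start from the bound in Proposition~\ref{prop:aux_convex2} and massage each term, using the $p$-homogeneity of $R$ (hence $q$-homogeneity of $R^\star$), the Xu--Roach inequality in Lemma~\ref{lem:xuroach}, and the \textit{a priori} estimate in Proposition~\ref{prop:apriori}, until the right-hand side takes the desired form. The key identity is that for any $\lambda > 0$ and any $z \in X^*$, one has $R^\star(\lambda z) = \lambda^q R^\star(z)$, and similarly $R(\lambda f) = \lambda^p R(f)$, which allows one to pull out the dilation parameters $\gamma_1, \gamma_2$.

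First, I would rewrite the infimum term in \eqref{eq:aux_breg2}: pulling out $\gamma_1^{-q}$ from $R^\star\bigl(\gamma_1^{-1}(\sdiff^\dag - \Abu^* \bar w)\bigr)$ and absorbing it into the quadratic penalty gives exactly
\[
\inf_{\bar w \in V_N}\!\left( R^\star\!\left(\tfrac{1}{\gamma_1}(\sdiff^\dag - \Abu^* \bar w)\right) + \tfrac \alpha 2 \norm{\bar w}_{V_N}^2\right) = \gamma_1^{-q}\, \msR(\alpha \gamma_1^q, \bu; f^\dag),
\]
which produces the first summand on the right-hand side of \eqref{eq:bregman_dist_gen2}. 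Similarly, $\tfrac{1}{\alpha}R^\star(\tfrac{\delta}{\gamma_2}\Abu^* \epsilon_N) = \tfrac{\delta^q}{\alpha \gamma_2^q}R^\star(\Abu^*\epsilon_N)$, which accounts for the first piece of the factor $H(\alpha,\delta,\gamma_1,\gamma_2)$.

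Next, I would collect the two $R(f^\dag - \fdan)$ terms, obtaining $\bigl(\gamma_1^p + \tfrac{\gamma_2^p}{\alpha}\bigr)R(f^\dag-\fdan)$ by $p$-homogeneity. Setting $\gamma^p := \gamma_1^p + \tfrac{\gamma_2^p}{\alpha}$, I would apply Lemma~\ref{lem:xuroach} to this quantity and move the resulting $\tfrac{p}{2}D_R(\fdan,f^\dag)$ to the left-hand side. After dividing by $1 - \tfrac{p}{2} > 0$, this produces a constant depending only on $p$ and introduces the factor $\bigl(\gamma_1^p + \tfrac{\gamma_2^p}{\alpha}\bigr)^{2/(2-p)} \max\{R(\fdan), R(f^\dag)\}$ on the right. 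To replace $\max\{R(\fdan), R(f^\dag)\}$ by something that does not involve $\fdan$, I would invoke Proposition~\ref{prop:apriori} (noting $p/(p-1)=q$), which yields $R(\fdan) \lesssim R(f^\dag) + (\delta/\alpha)^q R^\star(\Abu^* \epsilon_N)$. Substituting this bound splits the expression into exactly the $R(f^\dag)$ term and the additional $\bigl(\gamma_1^p + \tfrac{\gamma_2^p}{\alpha}\bigr)^{2/(2-p)}(\delta/\alpha)^q R^\star(\Abu^*\epsilon_N)$ contribution, which completes the definition of $H$ in \eqref{eq:paramH}.

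The main obstacle I anticipate is purely bookkeeping: the $p$-dependent constants from Xu--Roach, the factor $1-p/2$ crossing the inequality, and the constant from Proposition~\ref{prop:apriori} all need to be absorbed into a single $\widetilde C_p$, and one must check that every rescaling by $\gamma_1$ or $\gamma_2$ acts on $R$ and $R^\star$ with the \emph{correct} homogeneity exponent ($p$ versus $q$) so that the exponents in the final statement line up. There is no delicate analytical point here: once Propositions~\ref{prop:apriori} and~\ref{prop:aux_convex2} and Lemma~\ref{lem:xuroach} are in hand, the result follows by an algebraic combination. The only conceptually non-trivial choice is tying $\gamma$ in the Xu--Roach step to the combination $\gamma_1^p + \gamma_2^p/\alpha$, which is what makes the two terms produced by Proposition~\ref{prop:aux_convex2} collapse into a single clean exponent $2/(2-p)$.
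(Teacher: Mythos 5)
Your proposal is correct and follows exactly the route the paper indicates: this theorem is imported from \cite{Bubba21} without a reprinted proof, but the text explicitly describes it as the combination of Proposition~\ref{prop:apriori}, Proposition~\ref{prop:aux_convex2} and Lemma~\ref{lem:xuroach}, which is precisely your argument. All the homogeneity exponents, the identification $p/(p-1)=q$, and the choice $\gamma^p=\gamma_1^p+\gamma_2^p/\alpha$ in the Xu--Roach step check out, so the constants absorb into $\widetilde C_p$ as you describe.
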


The estimates below provide the convergence rate, for $N \rightarrow \infty$, of the expected value of the Bregman distance associated with the optimal choice of $\alpha$ and for $1<p<2$. 
In the theorem below, we reported the estimates in theorem 4.11 of \cite{Bubba21} for the choice $Q=q/2$ which is the relevant one for, \eg{}, Besov spaces and the purposes of this paper.

\begin{theorem}[theorem 4.11 with $Q=q/2$, \cite{Bubba21}]
\label{thm:general_rate}
Suppose that assumptions (A1)-(A2) are satisfied and that
\begin{equation}
\label{eq:ass_on_phomog_rate1}
\E \left[ \msR(\beta, \bu; f^\dag) \right] \lesssim \beta + N^{-\frac{q}{2}}
\end{equation}
and 
\begin{equation}
\label{eq:ass_on_phomog_rate2}
\E \left[R^\star(\Abu^* \epsilon_N)\right] \lesssim N^{-\frac q2}.
\end{equation}
Then, as $N \rightarrow \infty$, we have the following convergence rates:
\begin{itemize}
    \item if $\delta N \rightarrow \infty$ (and $\delta^2/N \rightarrow 0$), then
    \begin{equation}
	\label{eq:param_choice_p_fixed_Tapio}
	\E \left[D_R(\fdan, f^\dag) \right] \lesssim  \left( \frac{\delta^2}{N}\right)^{\frac{1}{3}} \quad \text{for} \quad \alpha \simeq  \left( \frac{\delta^2}{N}\right)^{\frac{1}{3}};
    \end{equation}
    \item if $\delta N$ is bounded, then 
    \begin{equation}
	\label{eq:param_choice_p_fixed_Tapio}
	\E \left[D_R(\fdan, f^\dag) \right] \lesssim  N^{-1} \quad \text{for} \quad \alpha \simeq N^{-1}. 
    \end{equation}
\end{itemize}
\end{theorem}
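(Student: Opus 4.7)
My plan is to combine the deterministic bound \eqref{eq:bregman_dist_gen2} with the two probabilistic hypotheses and then optimize the free parameters $\gamma_1,\gamma_2,\alpha$ in each of the two regimes. I start by taking expectations on both sides of \eqref{eq:bregman_dist_gen2}, substituting $\beta=\alpha\gamma_1^q$ into \eqref{eq:ass_on_phomog_rate1} and using \eqref{eq:ass_on_phomog_rate2}. This produces the deterministic estimate
\begin{equation*}
\E[D_R(\fdan,f^\dag)] \lesssim \alpha + \gamma_1^{-q}N^{-q/2} + H(\alpha,\delta,\gamma_1,\gamma_2)\,N^{-q/2} + \Bigl(\gamma_1^p + \tfrac{\gamma_2^p}{\alpha}\Bigr)^{2/(2-p)},
\end{equation*}
in which $R(f^\dag)$ has been absorbed as a multiplicative constant.

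The next step is to pick $\gamma_1,\gamma_2$ so that the composite factor degenerates to a single power of $\gamma_1$. The natural ansatz $\gamma_2=\alpha^{1/p}\gamma_1$ forces $\gamma_2^p/\alpha=\gamma_1^p$; combined with the H\"older identity $1+q/p=q$, it reduces the noise-propagation piece of $H\cdot N^{-q/2}$ to $\gamma_1^{-q}(\delta/\alpha)^qN^{-q/2}$. The bound then becomes $\alpha$ plus two elementary monomials in $\gamma_1$, of the form $A\gamma_1^{-q}+B\gamma_1^{2p/(2-p)}$, whose minimum is achieved in closed form using the exponent identity $q+2p/(2-p)=pq/(2-p)$. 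One finally tunes $\alpha$ in each of the two regimes.

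In the regime $\delta N\to\infty$ (with $\delta^2/N\to 0$), I verify that $\alpha\simeq(\delta^2/N)^{1/3}$ with the corresponding $\gamma_1\simeq(\delta^2/N)^{(2-p)/(6p)}$ makes every term $\lesssim(\delta^2/N)^{1/3}$: the critical cancellation is $\gamma_1^{-q}(\delta/\alpha)^qN^{-q/2}\simeq(\delta^2/N)^{1/3}$, which is uniform in $p\in(1,2)$ and uses $\delta/\alpha\simeq(\delta N)^{1/3}\to\infty$; the condition $\delta N \geq 1$ is exactly what makes the $\gamma_1^{-q}N^{-q/2}$ contribution subordinate. In the regime where $\delta N$ is bounded, $(\delta/\alpha)^q$ is $O(1)$, the expression collapses (after optimizing $\gamma_1\simeq N^{-(2-p)/(2p)}$) to $\alpha+N^{-1}$, and the choice $\alpha\simeq N^{-1}$ delivers the rate $N^{-1}$.

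The main obstacle is the simultaneous tuning of the three parameters: the exponents mix $p$ and $q=p/(p-1)$, and several competing contributions must be forced, at once, to lie below the target rate. Once the ansatz $\gamma_2=\alpha^{1/p}\gamma_1$ and the right scaling of $\gamma_1$ as a power of $\delta^2/N$ are identified, the remaining work is a careful but mechanical exponent-tracking substitution that recovers precisely the specialization of \cite[theorem~4.11]{Bubba21} to $Q=q/2$.
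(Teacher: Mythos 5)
Your argument is correct and follows exactly the route the paper sets up: it takes expectations in the deterministic bound of theorem~\ref{thm:bregman_dist_gen}, inserts \eqref{eq:ass_on_phomog_rate1} with $\beta=\alpha\gamma_1^q$ and \eqref{eq:ass_on_phomog_rate2}, and then optimizes $\gamma_1,\gamma_2,\alpha$ (the ansatz $\gamma_2=\alpha^{1/p}\gamma_1$ and the exponent identities $1+q/p=q$ and $q+2p/(2-p)=pq/(2-p)$ all check out, and the two regimes are handled correctly). The paper itself gives no proof of this statement -- it is imported verbatim from \cite{Bubba21} -- but your reconstruction matches the intended derivation there.
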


Notice that the case $\delta^2/N \not\rightarrow 0$ (when $N \rightarrow \infty$) is not relevant: indeed, the scenarios we are interested in are when $\delta$ is bounded (this is the framework of statistical learning) and when  $\delta \rightarrow 0$ (this is the framework of inverse problems). 

Finally, we consider the case of sparsity promoting regularization with respect to an orthonormal wavelet basis for $L^2(\R^d)$. This amounts to consider the following regularizer:
\begin{equation}
\label{eq:BesovRegu}
	R(f) := \frac{1}{p} \sum_{\lambda=1}^\infty 2^{\varrho |\lambda|} |\langle f, \psi_\lambda\rangle|^p,
\end{equation}
where $\varrho \in \R$ and $|\lambda|$ denotes the scale of the wavelet basis function $\psi_{\lambda}$.
Notice that this choice fits the proposed framework by choosing $X$ as the Besov space $B_p^s(\R^d)$ for a suitable $s$, equipped with the norm
\begin{equation}
\label{eq:BesovReguEquiv}
\norm{f}_X = \left( \sum_{\lambda=1}^\infty \coef |\langle f, \psi_\lambda\rangle|^p \right)^{1/p}
\qquad \text{with} \qquad 
\coef = 2^{|\lambda| d \big(p(\frac{s}d + \frac{1}{2}) -1 \big)}.
\end{equation}
Indeed, in~\cite{Daubechies04}, the authors show that this provides an equivalent norm on the Besov space, namely, for some positive constants $D,D'$, it holds $D \norm{f}_X \leq \norm{f}_{B_{p}^s} \leq D' \norm{f}_X$. In particular, for the choice $s = \frac{\varrho}{p} + d\left(\frac{1}{p} - \frac{1}{2} \right)$ it holds $\coefb = 2^{\varrho |\lambda|}$ and so the functional $R(f)$ appearing in \eqref{eq:BesovRegu} is equal to $\frac{1}{p}\| f \|_X^{p}$.
In the case of~\eqref{eq:BesovReguEquiv}, propositions 5.4 and 5.5 of \cite{Bubba21} show that the assumption of theorem 4.11 in \cite{Bubba21} are satisfied with $Q=q/2$. Both propositions hold true under the following assumptions:
\begin{itemize}
    \item[(B1)] The ground truth $f^\dag\in X$ satisfies a \emph{classical source condition},
    \begin{equation}
	 \exists \; w \in Z \; \text{ s.t. } \;  r^\dag = A_\mu^* w
	 \qquad \text{ where } \; r^\dag = \partial R(f^\dag).
	\label{eq:SC_Besov}
    \end{equation}
    \item[(B2)] The wavelet basis and the operator $A$ satisfy
    \begin{equation*}
	\sum_{\lambda=1}^\infty \coefb \norm{A \psi_\lambda}_{\infty}^q < \infty,
    \qquad 
    \text{where } \quad 
    \coefb = 2^{|\lambda| d \big(q(-\frac{s}d + \frac{1}{2}) -1 \big)}.
    \end{equation*}
\end{itemize}

\begin{remark}
\label{rm:SourceCondNames}
In the following, condition \eqref{eq:ass_on_phomog_rate1} related to the quantity \eqref{eq:Rbeautiful} will be referred to as \textit{approximate source condition}, as opposed to the \textit{strong source condition} represented by (B1).
\end{remark}

Before moving to extending the results reported in this subsection to our new framework, as a complement to our theoretical analysis, we introduce the guiding application for the numerical experiments, \ie{}, X-ray
tomography.

\subsection{Main application: semidiscrete Radon transform}
\label{ssec:radon}

While the theory we present applies to any operator satisfying assumptions (A1)-(A2) and~\eqref{eq:ass_on_phomog_rate1}-\eqref{eq:ass_on_phomog_rate2}, we are particularly interested in the case of (semidiscrete) Radon transform and its application in 2D tomographic imaging. 
We start by recalling the classical definition of Radon transform $\mathcal{R}$:
\[
\mathcal{R}f (\theta,\tau) = \int_{\R} f(\tau \theta + t \theta^\perp) dt \qquad \theta \in S^1, \tau \in \R.
\]  
We consider the operator $\mathcal{R}$ acting on square-integrable functions $f \in L^2(\Omega)$, with $\Omega =[0,1]^2$. 
In this case, the so-called \textit{sinogram} $\mathcal{R}f$ belongs to the space $L^2([0,2\pi)\times(-\bar{\tau},\bar{\tau}))$ for a suitable $\bar{\tau} > 0$. 
We would like to define the sampling operator as a function associating an angle $\theta \in U = [0,2\pi)$ to the sinogram related to that direction, namely, $\mathcal{R}(\theta) = \mathcal{R}(\theta,\cdot) \in L^2(-\bar{\tau},\bar{\tau})$. Unfortunately, the sinogram space $L^2([0,2\pi)\times(-\bar{\tau},\bar{\tau})) \cong L^2(U; L^2(-\bar{\tau},\bar{\tau}))$ does not show sufficient regularity to perform pointwise evaluations with respect to the angles. 
\par
One way to overcome this difficulty is to rely on a semidiscrete version of the Radon transform. In particular, we set the variable $\tau$ in a discrete space, which corresponds to modeling the X-ray attenuation measurements performed with a finite-accuracy detector, consisting of $\Ndtc$ cells. To this end, we introduce a uniform partition $\{I_1, \ldots, I_{\Ndtc}\}$ of the interval $(-\bar{\tau}, \bar{\tau})$, where we denote by $\tau_i$ the midpoint of each interval $I_i$ and take a smooth positive function $\rho$ of compact support within $(-1,1)$ such that $\int_{-1}^1 \rho(x) dx = 1$. The semidiscrete Radon transform is a function $\radonSD \colon L^1(\Omega) \rightarrow L^2([0,2\pi); \R^{\Ndtc})$ such that, for any $f \in L^1(\Omega)$ and $\theta \in [0,2\pi)$, each component of the vector $\radonSD f(\theta) \in \R^{\Ndtc}$ can be written as
\begin{equation}
	\label{eq:Radon_model_eqL}
	[(\radonSD f)(\theta)]_i = 
	\int_{I_i} \int_{\R} f(\tau \theta + t \theta^\perp) \rho\left(\frac{\tau-\tau_i}{|I_i|}\right) dt d\tau.
\end{equation}
Notice carefully that, according to the formalism of subsection \ref{ssec:ContFramework}, 
$Y = L^2(U;V)$ with $U = S^1\cong [0,2\pi)$ and $V = \R^{\Ndtc}$.
If $f \in L^1(\Omega)$, each component of $\radonSD f(\theta)$ can be interpreted as a suitable average of $\mathcal{R}f (\theta,\tau)$ in a subinterval $I_i$.
By the change of variables $x = \tau \theta + t \theta^\perp$ in equation \eqref{eq:Radon_model_eqL} we can rewrite the previous equation as 
\[
	[(\radonSD f)(\theta)]_i = \int_{\R^2} f(x) \rho_i(x,\theta) dx,
\]
being $\rho_i(x,\theta) = \rho\left(\frac{x\cdot\theta-\tau_i}{|I_i|}\right)$. 
\par 
As a consequence, from the continuity of $\rho$ we can deduce that each component of $\radonSD f(\theta)$ is well defined for any $f \in L^1(\Omega)$, and continuously depend on $\theta$. Hence, we can consider $\radonSD: L^1(\Omega) \rightarrow Z$ being $Z = \mathcal{C}(U;V)$ and the sampled operator $\radonSDth : L^1(\Omega) \rightarrow V$ is well defined for every $\theta \in U$. Moreover, the following bound holds uniformly in $\theta$:
\[
\norm{\radonSDth f}_{V}^2 = \norm{\radonSD f(\theta)}_{V}^2 = \sum_{i = 1}^{\Ndtc} \left|\int_{\Omega} f(x)\rho_i(x,\theta)dx\right|^2 \leq \Ndtc \norm{f}_{L^1(\Omega)}^2 \norm{\rho}_{\infty}^2,
\]
and therefore we conclude that $A$ is a bounded operator from $L^1(\Omega)$ to $Z$. This accounts to say that Assumptions (A1) and (A2) are satisfied for any function space $X$ such that $X$ is continuously embedded into $L^1(\Omega)$.

\subsection{Discrete framework}
\label{ssec:discretization}

In order to perform numerical simulation, we will need a fully discrete counterpart of~\eqref{eq:regularized_sol_R} in the case $A=\radonSD$. To this end, we replace the functional space $X$ with $\R^{\Npxl}$, where $\Npxl$ denotes the total number of pixels involved in the discretization of $\Omega = [0,1]^2$, and consider the following discrete model:
\begin{equation}
\gNd = \g_N^\dag + \delta \vec{\epsilon}_N = \RadonD_{\thetab} \f^\dag + \delta \vec{\epsilon}_N
\label{eq:DiscrRadonSampl}
\end{equation}
where  $\f^\dag \in \R^{\Npxl}$ denotes the (unknown) discrete and vectorized image, $\RadonD_{\thetab} \in \R^{\Ndtc N \times \Npxl}$ represents the sampled version of the Radon operator corresponding to the $N$ randomly sampled angles $\thetab$, $\g_N^\dag \in \R^{\Ndtc N}$ is the subsampled sinogram and $\vec{\epsilon}_N \in \R^{\Ndtc N}$ is the noise. In the implementation, we consider a normal distribution for the noise vector, $\vec{\epsilon} \sim \mathcal{N}(\vec{0},\mathbbm{1}_{\Ndtc N})$, where $\mathbbm{1}_{\Ndtc N}$ is the identity matrix in $\R^{\Ndtc N \times \Ndtc N}$.

In what follows we will only consider regularizers of the form:
\begin{equation}
\vec{R}(\f) = \frac{1}{p} \norm{\Mop \f}_p^p
\label{eq:BesovReguDiscr}
\end{equation}
where $1 < p < 2$ and $\Mop \in \R^{\sigma\Npxl \times \Npxl}$ depends on the sparsifying transform. For example, when we consider wavelet-based regularization, then  $\Mop = \Wop \in \R^{\Npxl \times \Npxl}$ (with $\sigma=1$) is the matrix representation of an orthonormal wavelet transform. 
In particular, according to \eqref{eq:BesovReguEquiv}, $\vec{R}(\f)$ is equivalent to the $B_p^s(\Omega)$ norm, provided that $s = d\left( \frac{1}{p} -\frac{1}{2} \right)$.
With these notations, the discrete counterpart of \eqref{eq:regularized_sol_R} reads as:
\begin{equation}
\faNd = \argmin_{\f \in \R^{\Npxl}} \left\{ \frac{1}{2N} \norm{ \RadonD_{\thetab} \f -\gNd }_2^2 + \alpha \vec{R}(\f)  \right\}.   
\label{eq:minsDiscr}
\end{equation}

To solve~\eqref{eq:minsDiscr}, we use the variable metric inexact line-search algorithm (VMILA)~\cite{Bonettini16} (see, in particular, equations~\eqref{eq:VMILAiter} and \eqref{eq:VMILADualProbl_lp} in appendix~\ref{app:VMILA}).  
Compared to the proximal gradient descent (PGD) algorithm used in~\cite{Bubba21}, VMILA allows for more freedom with respect to the objective function to minimize. Firstly, we can consider any $p \geq 1$, while with PGD we were limited to values of $p$ which allowed for an explicit, analytic formula for the associated proximal operator. Moreover, VMILA admits in the regularization term also other sparsifying transforms than those forming an orthonormal basis. Finally, VMILA can easily handle minimization problems constrained over convex sets. 

\subsubsection{Implementing the source condition}
\label{subsec:SourceCond}
To verify numerically the convergence rates in theorem~\ref{thm:general_rate}, the test image $\f^{\dag}$ should satisfy the source condition (B1). To formulate (B1) in the discrete setting, we use a sufficiently refined discretization of the space $Y$ of full sinograms, namely, we consider the matrix $\RadonD \in \R^{\Ndtc \Nth \times \Npxl}$ representing the Radon transform acting from $\R^{\Npxl}$ to $\R^{\Ndtc \Nth }$, with a fixed number of imaging angles $\Nth \gg N$.
This leads to:
\begin{equation}
\exists  \; \w \in \R^{\Ndtc\Nth} \qquad \text{s.t.} \quad \Wop^{\text{T}} (\Wop \f^{\dag})^{[p-1]} = \RadonD^{\text{T}} \w  
\label{eq:SourceCondDiscr}
\end{equation}
where $\x^{[p]}$ is the component-wise signed $p$-th power, that is, $[\x^{[p]}]_i = \sign (x_i) |x_i|^{p}$.

In practice, a generic phantom of interest $\f_0$ does not necessarily satisfy~\eqref{eq:SourceCondDiscr}. Therefore, in the numerical simulations, to build a phantom that satisfies~\eqref{eq:SourceCondDiscr}, we follow the same strategy of~\cite{Bubba21}: 
\begin{itemize}
    \item[a)] Determine a vector $\w \in  \R^{\Ndtc\Nth}$ solution of the regularized problem
    \begin{equation}
    \w = \argmin_{\widetilde{\w} \in \R^{\Ndtc\Nth}}  \left\{ \frac{1}{2} \norm{\RadonD^{\text{T}} \widetilde{\w} - \Wop^{\text{T}} (\Wop \f_0)^{[p-1]}}_2^2 +  \alpha_{SC}   \norm{\widetilde{\w}}_2^2 \right\},
    \label{eq:SourceCondTik}
    \end{equation}
    for a suitable $\alpha_{SC}>0$.
    \item[b)] Compute $\f^{\dag} = \Wop^{\text{T}}(\Wop \RadonD^{\text{T}} \w)^{[1/(p-1)]}$.
\end{itemize}
As a result, $\f^{\dag}$ satisfies the source condition associated with $\w$, and $\|\f^{\dag} - \f_0\|_2$ is expected to be small. To solve~\eqref{eq:SourceCondTik}, we use the scaled gradient projection (SGP) algorithm~\cite{Bonettini09} (see equations~\eqref{eq:ObjFunctSGP}-\eqref{eq:SGPiter} in appendix~\ref{ssec:VMILAnonneg}).


\section{Warm-up: constraining the problem to the non-negative orthant}
\label{sec:NonnegConstr}
In X-ray tomography applications, it is known a priori that the desired image $f^\dag$ is non-negative and including this information is fundamental to obtain superior reconstruction results. This leads to the following minimization problem:
\begin{equation}
	\label{eq:regularized_nonneg}
	\fdan \in  \argmin_{f\in X} \Jdan(f) := \argmin_{f\in X} \left\{ \frac 12 \norm{\Abu f - \gdan}^2_{V_N} + \alpha R(f) + \iota_{+}(f)\right\}
\end{equation}
where $R(f) = \frac{1}{p} \norm{f}_X^p$ and $\iota_{+}$ is the indicator function of the non-negativity constraint:
\[
\iota_{+}(f) = \begin{cases}
0 &\text{if } \; f \geq 0 \; \text{a.e.,}\\
\infty &\text{elsewhere.}
\end{cases}
\]
This, in particular, implies that $\fdan$ is non-negative. It is moreover natural to introduce the following assumption:
\begin{equation} \label{eq:nonneg_fdaga}
    f^\dag \geq 0 \quad \text{a.e.}
\end{equation}
In the following, we denote by $\wtR(f) = R(f) + \iota_{+}(f)$ the constrained regularization functional. Clearly,  $\wtR(f) = R(f)$ if $f \geq 0$ a.e., and $\wtR(f)=\infty$ otherwise. As in the previous section, the choice $R(f) = \frac{1}{p}\| f \|_X^p$ ensures that $R$ satisfies (R1)-(R3). We do not need to guarantee the same for $\wtR$.

\subsection{Convergence rates for constrained regularization}
\label{ssec:NonnegConstr_theory}

We now characterize the Bregman distance associated with $\wtR$. Notice that this only makes sense when computed between non-negative functions. Also, since $\iota_{+}$ is not differentiable,
$\partial \wtR(g)$ is not single-valued. 
Nevertheless, since $R$ is differentiable we have
\[
\partial \wtR(g) = \{ \nabla R (g)\} \oplus 
    \partial \iota_{+} (g).
\]
For our scopes it is enough to consider the distance $D_{\wtR}$ associated with the choice $0 \in \partial \iota_{+} (g)$, that is,
\begin{equation}
D_{\wtR} (f,\tilde{f}) = 
    \< \nabla R(f) - \nabla R(\tilde{f}), f - \tilde{f} \>, 
    \qquad \forall \; f, \tilde{f} \geq 0 \quad \text{a.e.}
\end{equation}
This amounts to saying that
\begin{equation}
\label{eq:DistBreg_nonneg}
D_{\wtR} (f,\tilde{f}) = D_R (f,\tilde{f}) 
\qquad \forall \; f, \tilde{f} \geq 0 \quad \text{a.e.}    
\end{equation}

\bigskip

We are now ready to show that the results in subsection~\ref{ssec:PrelRes} hold true for the non-negative constrained case. First, we show that, thanks to~\eqref{eq:DistBreg_nonneg}, the general bounds on the regularization term and the symmetric Bregman distance between $\fdan$ and $f^\dag$ are the same as in the unconstrained case.

\begin{proposition}
\label{prop:apriori_nonneg}
The functional $\Jdan$ has a unique minimizer, $\fdan \in X$, which satisfies
\begin{equation}
\label{eq:apriori2_nonneg}
	R(\fdan) \leq C\left(R(f^\dag) + \left(\frac{\delta}\alpha\right)^{\frac{p}{p-1}} R^\star( \Abu^* \epsilon_N)\right)
\end{equation}
for some constant $C>0$.
\end{proposition}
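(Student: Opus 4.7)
The plan is to reduce the claim to the already-established unconstrained version of the bound, i.e. proposition~\ref{prop:apriori}, by exploiting the fact that the non-negativity constraint is inactive both at the ground truth (by assumption~\eqref{eq:nonneg_fdaga}) and, trivially, at the minimizer (by construction).

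First I would handle existence and uniqueness. The objective $\Jdan$ is the sum of a strictly convex (indeed, strongly convex in the image of $\Abu$) quadratic term, the strictly convex $p$-homogeneous regularizer $\alpha R$ (with $1<p<2$), and the convex indicator $\iota_{+}$. Strict convexity of the sum gives at most one minimizer. For existence, the sublevel sets of $\wtR$ are intersections of the sublevel sets of $R$ (sequentially compact in $\mathscr{T}$ by (R2)) with the closed convex cone $\{f \geq 0\}$, which is preserved under the topology $\mathscr{T}$ in all the cases of interest (the non-negativity constraint is a.e.\ closed for any reasonable convergence), so they are still sequentially compact. Combined with the lower semicontinuity of $\Jdan$, the direct method yields a unique minimizer $\fdan \in X$, which by feasibility satisfies $\fdan \geq 0$ a.e.

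For the bound, I would compare $\Jdan(\fdan)$ with $\Jdan(f^\dag)$. Since $f^\dag \geq 0$ by~\eqref{eq:nonneg_fdaga} and $\fdan \geq 0$ by construction, both indicator terms vanish, so the optimality inequality reduces to
\begin{equation*}
    \frac{1}{2}\norm{\Abu \fdan - \gdan}_{V_N}^2 + \alpha R(\fdan) \;\leq\; \frac{1}{2}\norm{\Abu f^\dag - \gdan}_{V_N}^2 + \alpha R(f^\dag).
\end{equation*}
This is identical to the starting inequality used in the proof of \cite[proposition 3.1]{Bubba21}. From here, I would substitute $\gdan = \Abu f^\dag + \delta \epsilon_N$, expand the squared norms so that the $\frac12 \delta^2 \|\epsilon_N\|^2$ terms cancel, and obtain
\begin{equation*}
    \tfrac{1}{2}\norm{\Abu(\fdan-f^\dag)}_{V_N}^2 + \alpha R(\fdan) \;\leq\; \alpha R(f^\dag) + \delta \<\fdan - f^\dag,\, \Abu^* \epsilon_N\>_{X\times X^*}.
\end{equation*}
The cross term is handled by a scaled Fenchel--Young inequality, using the $p$-homogeneity of $R$ (which implies $R^\star(\lambda\cdot)=\lambda^{q}R^\star(\cdot)$ with $q=p/(p-1)$), together with a triangle-type bound $R(\fdan-f^\dag)\leq C(R(\fdan)+R(f^\dag))$. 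Choosing the scaling parameter so that the $R(\fdan)$ contribution on the right can be absorbed into the left-hand side yields~\eqref{eq:apriori2_nonneg}.

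I do not anticipate a real obstacle: the entire argument post-reduction is verbatim the one of \cite[proposition 3.1]{Bubba21}, and crucially it never invokes the symmetry condition $R(-f)=R(f)$ or the possibility of sign-changes of $f$. The only point worth double-checking is that the $p$-homogeneity-based manipulations of $R$ and $R^\star$ go through for non-symmetric $R$, which they do since they rely solely on positive rescaling.
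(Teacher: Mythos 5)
Your proposal is correct and takes essentially the same route as the paper: the paper's own proof also reduces everything to \cite[proposition 3.1]{Bubba21} by noting that every element of the sublevel set $M_b=\{f \in X : \Jdan(f)\le \Jdan(f^\dag)\}$ is non-negative, so $\wtR=R$ there and the unconstrained argument (Fenchel--Young with $p$-homogeneous rescaling and absorption of the $R(\fdan)$ term) goes through verbatim. Your phrasing via the feasibility of $\fdan$ and the assumption $f^\dag\ge 0$ a.e.\ is just an equivalent way of making that single critical observation.
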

\begin{proof}
The general structure of the proof follows the structure of~\cite[proposition 3.1]{Bubba21}. The only critical step is the following. Consider $b = \Jdan(f^\dag)$ and the sublevel set $M_b = \{f \in X\; | \; \Jdan(f) \leq b \}$. Now, any $f \in M_b$ must be non-negative: therefore, $\wtR = R$ in $M_b$ and the claim follows as in the rest of the proof of \cite[proposition 3.1]{Bubba21} by using the $p$-homogeneity of $R$.
\end{proof}

\begin{proposition}
\label{prop:aux_convex2_nonneg}
The regularized solution $\fdan$ given by \eqref{eq:regularized_nonneg} satisfies
\begin{multline}
	\label{eq:aux_breg2}
	D_R(\fdan, f^\dag) \
	\leq  \inf_{\bar{w} \in V_N} \left\{ R^\star\left(\frac{1}{\gamma_1}(\sdiff^\dag - \Abu^* \bar w)\right) + \frac \alpha 2 \norm{\bar w}_{V_N}^2\right\} + R(\gamma_1(f^\dag - \fdan)) \\
	 + \frac{1}{\alpha} \left(R^\star\left(\frac{\delta} {\gamma_2} \Abu^* \epsilon_N\right)  +  R\left(\gamma_2(f^\dag - \fdan)\right)\right),
\end{multline}
where $\sdiff^\dag = \nabla R(f^\dag)$ and $\gamma_1, \gamma_2 \in \R$ are positive constants.
\end{proposition}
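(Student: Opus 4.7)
The plan is to mimic the proof of Proposition~\ref{prop:aux_convex2} from~\cite{Bubba21}, modifying only the step that uses the optimality condition so as to absorb the extra subgradient piece contributed by $\iota_+$. The optimality condition for the constrained minimizer $\fdan$ of~\eqref{eq:regularized_nonneg} reads
\[
\Abu^*(\Abu \fdan - \gdan) + \alpha \nabla R(\fdan) + \alpha \xi = 0 \qquad \text{for some } \xi \in \partial \iota_+(\fdan),
\]
where $\nabla R(\fdan)$ is the (well-defined) Fr\'echet derivative of $R(f) = \tfrac{1}{p}\|f\|_X^p$, $1<p<2$. Substituting $\gdan = \Abu f^\dag + \delta \epsilon_N$ into the identity $D_R(\fdan, f^\dag) = \langle \nabla R(\fdan) - \sdiff^\dag, \fdan - f^\dag\rangle$ yields
\[
D_R(\fdan, f^\dag) = -\tfrac{1}{\alpha}\|\Abu(\fdan - f^\dag)\|_{V_N}^2 + \tfrac{\delta}{\alpha}\langle \Abu^*\epsilon_N, \fdan - f^\dag\rangle - \langle \xi, \fdan - f^\dag\rangle - \langle \sdiff^\dag, \fdan - f^\dag\rangle.
\]

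The key new observation is that the $\xi$ term is harmless. Indeed, by~\eqref{eq:nonneg_fdaga} we have $f^\dag \geq 0$ a.e., and $\fdan \geq 0$ a.e.\ as well, so $\iota_+(\fdan) = \iota_+(f^\dag) = 0$. The defining subgradient inequality for $\xi \in \partial \iota_+(\fdan)$ evaluated at $f^\dag$ therefore collapses to $\langle \xi, \fdan - f^\dag\rangle \geq 0$, whence $-\langle \xi, \fdan - f^\dag\rangle \leq 0$ and this term can be dropped from the upper bound. What remains is precisely the starting identity of the unconstrained proof in~\cite{Bubba21}.

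From that point on the argument is routine. For arbitrary $\bar w \in V_N$, I split $-\langle \sdiff^\dag, \fdan - f^\dag\rangle = -\langle \sdiff^\dag - \Abu^*\bar w, \fdan - f^\dag\rangle - \langle \bar w, \Abu(\fdan - f^\dag)\rangle_{V_N}$. Applying Fenchel--Young with scaling $\gamma_1$ to the first summand produces $R^\star(\gamma_1^{-1}(\sdiff^\dag - \Abu^*\bar w)) + R(\gamma_1(f^\dag - \fdan))$; a weighted Young inequality with weight $\alpha$ on the second summand produces $\tfrac{\alpha}{2}\|\bar w\|_{V_N}^2 + \tfrac{1}{2\alpha}\|\Abu(\fdan - f^\dag)\|_{V_N}^2$, whose last piece is absorbed by the standing $-\tfrac{1}{\alpha}\|\Abu(\fdan - f^\dag)\|_{V_N}^2$; finally, a Fenchel--Young with scaling $\gamma_2$ applied to $\tfrac{1}{\alpha}\langle \delta\Abu^*\epsilon_N, \fdan - f^\dag\rangle$ yields $\tfrac{1}{\alpha}\bigl(R^\star(\delta\gamma_2^{-1}\Abu^*\epsilon_N) + R(\gamma_2(f^\dag - \fdan))\bigr)$, where I use that $R$ is even. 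Taking the infimum over $\bar w$ produces~\eqref{eq:aux_breg2}. The only genuine obstacle is the treatment of $\xi$, which is resolved in one line by the a priori non-negativity~\eqref{eq:nonneg_fdaga} of $f^\dag$; all other steps follow~\cite{Bubba21} verbatim.
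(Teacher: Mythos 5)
Your proposal is correct and follows essentially the same route as the paper: the paper phrases the optimality condition as the variational inequality $0 \leq \langle -\Abu^*(\Abu\fdan - \gdan) - \alpha\nabla R(\fdan), \fdan - f\rangle$ for all $f \geq 0$ and specializes to $f = f^\dag$, which is exactly your observation that the subgradient element $\xi \in \partial\iota_+(\fdan)$ satisfies $\langle\xi, \fdan - f^\dag\rangle \geq 0$ and can be dropped. The remaining Fenchel--Young and Young steps coincide with the unconstrained argument of~\cite{Bubba21}, just as in the paper (and your appeal to the evenness of $R$ is harmless here since $R = \tfrac1p\|\cdot\|_X^p$ is even).
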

\begin{proof}
The general structure of the proof follows the structure of~\cite[proposition 3.2]{Bubba21}. We only need to modify some critical steps of the original arguments. Since the subgradient of $\wtR$ is not single-valued, the optimality criterion~\eqref{eq:optimality_criterion} in this case is given by
\[
0 \in \partial \Jdan(\fdan) 
\qquad \Longleftrightarrow \qquad 
0 \in \{ \Abu^* (\Abu \fdan - \gdan) + \alpha \nabla R(\fdan)\}
\oplus \partial \iota_{+}(\fdan).
\]
This is equivalent to
\[
- \Abu^* (\Abu \fdan - \gdan) - \alpha \nabla R(\fdan) 
    \in \partial \iota_{+}(\fdan),
\]
which in turn yields
\begin{equation}
\label{eq:optimality_criterion_nn}
0 \leq \< - \Abu^* (\Abu \fdan - \gdan) - \alpha \nabla R(\fdan), \fdan - f \> 
\qquad \forall \, f \geq 0.
\end{equation}
By choosing $f = f^{\dag}$ (which is non-negative by assumption), applying the definition of $\gdan$ given in \eqref{eq:noisydata} to the optimality criterion
\eqref{eq:optimality_criterion_nn} and by subtracting on both sides $\alpha \< \nabla R(f^\dag), \fdan - f^\dag \>$, we obtain
\begin{equation}
\norm{\Abu(\fdan - f^\dag)}_{V_N}^2 
    + \alpha D_R(\fdan, f^\dag) \; \leq \;
	\alpha \langle \sdiff^\dag, f^\dag - \fdan\rangle_{X^*\times X} + \delta \langle \Abu^* \epsilon_N, \fdan - f^\dag  \rangle_{X^* \times X}.  
\end{equation}
The remainder of the proof is identical to~\cite[proposition 3.2]{Bubba21}. Notice, in particular, that we use the Fenchel-Young's inequality employing (the convex conjugate of) $R$, not $\wtR$.
\end{proof}

Given that proposition~\ref{prop:apriori_nonneg} and~\ref{prop:aux_convex2_nonneg} provide estimates depending on $R$ (and not $\wtR$), and $D_{\wtR} (\fdan,f^{\dag}) = D_R (\fdan,f^{\dag})$, we can directly use lemma~\ref{lem:xuroach} to prove theorems~\ref{thm:bregman_dist_gen} and \ref{thm:general_rate}. We then summarize the main result for the constrained case in the theorem below.  Since we are particularly interested in providing a theoretical backbone for the wavelet-based regularization with non-negativity constraint, we consider the case $X = B_p^s$ and assume that conditions (B1)-(B2) hold true. A more general result can be obtained whenever it is possible to guarantee that \eqref{eq:ass_on_phomog_rate1}-\eqref{eq:ass_on_phomog_rate2} are satisfied.

\begin{theorem}
\label{thm:general_rate_nonneg}
Suppose assumptions (A1)-(A2) are verified. Let $\wtR(f) = R(f) + \iota_{+}(f)$ and assume that $f^\dag \geq 0$ a.e. Let $X = B_p^s$ and suppose that (B1)-(B2) hold true.
Then, we have the following convergence rates, as $N \rightarrow \infty$:
\begin{itemize}
    \item if $\delta N \rightarrow \infty$ (and $\delta^2/N \rightarrow 0$), then
    \begin{equation}
	\label{eq:param_choice_p_fixed}
	\E \left[D_{\wtR}(\fdan, f^\dag) \right] \lesssim  \left( \frac{\delta^2}{N}\right)^{\frac{1}{3}} \quad \text{for} \quad \alpha \simeq  \left( \frac{\delta^2}{N}\right)^{\frac{1}{3}};
    \end{equation}
    \item if $\delta N$ is bounded, then 
    \begin{equation}
	\label{eq:param_choice_p_satur}
	\E \left[D_{\wtR}(\fdan, f^\dag) \right] \lesssim  N^{-1} \quad \text{for} \quad \alpha \simeq N^{-1}.
    \end{equation}
\end{itemize}
\end{theorem}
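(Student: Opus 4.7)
The plan is to reduce the constrained theorem to the unconstrained machinery of \cite{Bubba21} by exploiting the crucial observation~\eqref{eq:DistBreg_nonneg}, namely that $D_{\wtR}(\fdan, f^\dag) = D_R(\fdan, f^\dag)$ whenever both arguments are non-negative a.e. Since $\fdan$ minimizes a functional with the indicator $\iota_+$, it is automatically non-negative a.e., and the assumption on $f^\dag$ guarantees the same for the ground truth, so the symmetric Bregman distance we want to control is in fact the ordinary (differentiable) one associated with $R$.

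First, I would combine propositions~\ref{prop:apriori_nonneg} and \ref{prop:aux_convex2_nonneg} exactly as \cite[theorem 4.3]{Bubba21} does in the unconstrained case: plug the a priori bound on $R(\fdan)$ from \eqref{eq:apriori2_nonneg} into the right-hand side of \eqref{eq:aux_breg2} via Lemma~\ref{lem:xuroach} applied to the terms $R(\gamma_i(f^\dag - \fdan))$, for $i=1,2$, with two judicious choices of the free parameter $\gamma$. The upshot is that the Bregman term $D_R(\fdan, f^\dag)$ on the left can absorb the $D_R$ that reappears on the right via Xu-Roach, leaving a deterministic estimate of exactly the form~\eqref{eq:bregman_dist_gen2}, with the same function $H$ given in \eqref{eq:paramH}, the same structural factors $\gamma_1^{-q}\msR(\alpha\gamma_1^q, \bu; f^\dag)$, $R^\star(\Abu^* \epsilon_N)$, and $R(f^\dag)$. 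All inputs are in terms of $R$ rather than $\wtR$, so no subtlety about the non-differentiability of $\iota_+$ ever enters.

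Next, I would take expectations on both sides of this deterministic bound and verify the two stochastic conditions \eqref{eq:ass_on_phomog_rate1}--\eqref{eq:ass_on_phomog_rate2} in the Besov setting $X = B_p^s$. Because both conditions concern $R$, $R^\star$, $f^\dag$, and the sampling operator $\Abu$ only, they are unchanged by the addition of the indicator $\iota_+$ in the regularizer, so under (B1)--(B2) they follow verbatim from \cite[propositions 5.4 and 5.5]{Bubba21} with $Q = q/2$. Inserting these bounds yields
\[
\E\bigl[D_{\wtR}(\fdan, f^\dag)\bigr] \;\lesssim\; \gamma_1^{-q}\bigl(\alpha\gamma_1^q + N^{-q/2}\bigr) + H(\alpha,\delta,\gamma_1,\gamma_2)\,N^{-q/2} + \Bigl(\gamma_1^p + \tfrac{\gamma_2^p}{\alpha}\Bigr)^{\!\frac{2}{2-p}}.
\]

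Finally, I would optimize over the free parameters exactly as in \cite[theorem 4.11]{Bubba21}: equilibrate $\gamma_1, \gamma_2$ against $\alpha$ so that all contributions are of the same order, then choose $\alpha$ to balance the noise-driven and sampling-driven terms. In the regime $\delta N \to \infty$ with $\delta^2/N \to 0$, this gives the rate $(\delta^2/N)^{1/3}$ with $\alpha \simeq (\delta^2/N)^{1/3}$, while in the bounded $\delta N$ regime one obtains the $N^{-1}$ rate with $\alpha \simeq N^{-1}$. I do not expect any genuine obstacle beyond bookkeeping: the real work has been done in propositions~\ref{prop:apriori_nonneg} and \ref{prop:aux_convex2_nonneg}, whose role is precisely to certify that the constrained problem produces estimates of the same algebraic form as the unconstrained one, so that the subsequent calculus of rates transfers intact.
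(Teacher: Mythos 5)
Your proposal is correct and follows essentially the same route as the paper: the paper likewise observes that propositions~\ref{prop:apriori_nonneg} and~\ref{prop:aux_convex2_nonneg} yield estimates in terms of $R$ alone, that $D_{\wtR}(\fdan,f^\dag)=D_R(\fdan,f^\dag)$ by~\eqref{eq:DistBreg_nonneg}, and that one can then invoke lemma~\ref{lem:xuroach}, theorem~\ref{thm:bregman_dist_gen}, and the rate calculus of theorem~\ref{thm:general_rate} verbatim, with (B1)--(B2) guaranteeing~\eqref{eq:ass_on_phomog_rate1}--\eqref{eq:ass_on_phomog_rate2} via propositions 5.4 and 5.5 of~\cite{Bubba21}. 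Your write-up simply makes explicit the bookkeeping the paper leaves implicit.
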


\subsection{Experiments and results}
\label{ssec:NonnegConstr_numerics}
In this subsection, we verify the expected convergence rates proven in theorem \ref{thm:general_rate_nonneg} for the previously introduced tomographic application (see subsections~\ref{ssec:radon} and \ref{ssec:discretization}), considering the following two scenarios:
\begin{itemize}
\item Fixed noise, \ie{}, $\delta >0$ constant. Since $\delta N \rightarrow \infty$, according to \eqref{eq:param_choice_p_fixed}, we take $\alpha \simeq N^{-1/3}$. In particular, we choose $\delta = c_{\delta} = 0.01 \| \RadonD \f^\dag \|_{\infty}$, namely, the $1\%$ of the peak value of the sinogram of the ground truth computed with a sufficiently fine angle discretization. Then, we let $\alpha = c_{\alpha}N^{-1/3}$, where $c_{\alpha}$ is heuristically determined in each experiment;
\item Decreasing noise, for example, $\delta \simeq N^{-1}$. In this case, the optimal parameter choice is $\alpha \simeq N^{-1}$ (see \eqref{eq:param_choice_p_satur}). In particular, 
we choose $\delta = c_{\delta} N^{-1}$, with $c_{\delta} = 0.02 N_{\min} \| \RadonD \f^\dag \|_{\infty}$, where $N_{\min}$ is the minimum value of $N$ considered for the numerical experiments. Notice that if  $N \in [N_{\min}, N_{\max}]$, the corresponding value of $\delta$ will range from $0.02 \| \RadonD \f^\dag \|_{\infty}$ to $0.02 N_{\min}/N_{\max} \| \RadonD \f^\dag \|_{\infty}$. Then, we set $\alpha = c_{\alpha} N^{-1}$, where $c_{\alpha}$ is heuristically determined in each experiment.
\end{itemize}
In all our experiments, we choose $N_{\min} = 36$ and $N_{\max} = 162$ in the interval $[0,\pi)$. In each noise scenario, $N$ random angles are sampled using Matlab's \texttt{rand} and the Gaussian noise $\vec{\epsilon}_N$ is created by the command \texttt{randn}. 
The forward operators $\RadonD_{\thetab}$ and its adjoint (as well as $\RadonD$ and its adjoint) are implemented using Matlab's \texttt{radon} and \texttt{iradon} routines, with suitable normalization. 
Reconstructions are computed using VMILA as described in appendix~\ref{ssec:VMILAnonneg} (see, in particular,  equations~\eqref{eq:VMILAiter} and \eqref{eq:VMILADualObjConstrained}), where we set $\Mop = \Wop \in \R^{\Npxl \times \Npxl}$.
The operator $\Wop$ is implemented using
SPOT's \texttt{opWavelet2}~\cite{VandenBerg2014}, with Haar filters and five scales.
The expected values appearing in~\eqref{eq:param_choice_p_fixed} and~\eqref{eq:param_choice_p_satur} are approximated by sample averages, computed using $30$ random realizations. This means that, for each number of angles $N$, the reconstruction is performed $30$ times, each time with a different set of $N$ drawn angles and noise vector.

Finally, in all tests we employ a phantom sized $128 \times 128$, hence $\Npxl = 128^2$. Notice that, to verify the converge rates in theorem~\ref{thm:general_rate_nonneg}, 
the phantom should satisfy the source condition. This is generally not a trivial task and we discuss in details two different strategies in the subsection below.

\subsubsection{Source conditions}
In subsection~\ref{subsec:SourceCond}, we described a technique to generate an image $\f^\dag$ satisfying the source condition (B1), starting from a phantom of interest $\f_0$. Unfortunately, even though the original phantom $\f_0$ is non-negative, the resulting $\f^\dag$ may have some negative components, thus violating \eqref{eq:nonneg_fdaga}. A possible solution to this issue is to post-process the image $\f^\dag$ by rescaling it so that its components range between $0$ and $1$. We do so by dividing it by its maximum component and then setting to zero the negative entries, that is
\[
[\f^\dag]_i \leftarrow \max\left\{\frac{[\f^\dag]_i}{\max_j([\f^\dag]_j)} , 0\right\}.
\]
The operation of rescaling is linear, which means that the rescaled image satisfies the source condition (associated with the vector $(\max_j([\f^\dag]_j))^{[1-p]} \w$); nonetheless, taking the positive part of a vector is not linear. As a result, we cannot expect that the post-processed image $\f^\dag$ satisfies the source condition exactly; nevertheless, we expect the quantity $\| \RadonD^T \w - \nabla \vec{R}(\f^\dag)\|_2$ to be small. 
\par
As an alternative, we can verify if the original phantom $\f^\dag = \f_0$ satisfies the approximate source conditions associated with the operator $\vec{R}$ (namely, a discrete version of \eqref{eq:ass_on_phomog_rate1}, see also remark~\ref{rm:SourceCondNames}). In particular, we can fix a sufficiently small value of $\beta$ and study the decay of the quantity $\E[\msR(\beta, \bu; \f^\dag)]$ with respect to $N$ as described in algorithm~\ref{algo:approxSC}.
\begin{algo}[]
(Verification of the approximate source conditions)
\begin{enumerate}
    \item Fix a phantom $\f^\dag$, $1<p<2$ and $\beta>0$;
    \item Compute $\vsdiff^\dag = \Wop^T(\Wop\f^\dag)^{[p-1]}$;
    \item For $N = N_{\min}, \ldots, N_{\max}$
    \begin{enumerate}
        \item For $k = 1,\ldots,K$
        \begin{itemize}
            \item[(a$_1$)] Randomly sample $N$ angles;
            \item[(a$_2$)] Compute $ \msR(\beta,\bu;\f^\dag)$ by solving $\displaystyle \inf_{\bar{\w} \in \R^{\Ndtc N}} 
	\bigg\{
	\frac{1}{q}\norm{\vsdiff^\dag - \RadonD_{\thetab}^T \bar{\w}}_q^q + \frac{\beta}{2N} \norm{\bar{\w}}_{2}^2 \bigg\}$
        \end{itemize}
    \item Approximate $\E[\msR(\beta, \bu; \f^\dag)]$ by the sample average on the $K$ repetitions.
    \end{enumerate}
    \item Verify that $\E[\msR(\beta, \bu; \f^\dag)]$ decays as $N^{-q/2}$.
\end{enumerate}
\label{algo:approxSC}
\end{algo}

\begin{remark}
Notice that the latter strategy (based on verifying the approximate source condition) yields a phantom that does not depend anymore on a particular $p$. Indeed, for the strong source condition (B1) we build a  different phantom $\f^\dag$, for a particular choice of $p$, starting from a phantom image $\f_0$. Instead, for the approximate source condition we study the decay of the quantity $\E[\msR(\beta, \bu; \f^\dag)]$ for a phantom image $\f_0$ and, if verified, we set $\f^\dag = \f_0$ independently of $p$. 

On the other hand, to verify the strong source condition one only needs the operator $\RadonD$, \ie{}, always the same operator regardless of the sampling operation. Instead, the approximate source condition requires the sampled operator $\RadonD_{\thetab}$.
\end{remark}

In all tests, we chose $\f_0$ to be the plant phantom, available on GitHub~\cite{PlantPhantom2020}. For such $\f_0$ we verified both the approximate source condition (with $K=30$) and the strong source condition (with different values of $p$), for which we always have that  $\| \RadonD^T \w - \nabla \vec{R}(\f^\dag)\|_2$ is below the $4\%$ of $\| \RadonD^T \w\|_2$. In this context, we used an operator $\RadonD$ on a refined angle grid with $\Nth = 500$. Similarly to~\eqref{eq:SourceCondTik}, the solution of the minimization problem in (a$_2$) is done by using the SGP algorithm (see equations~\eqref{eq:ObjFunctSGP}-\eqref{eq:SGPiter} in appendix~\ref{ssec:VMILAnonneg}).

\subsubsection{Discussion}
\label{sssec:ResultsNonneg}

\begin{figure}
    \centering
    \begin{tabular}{@{}c@{\;}c@{\;}c@{\;}c@{}}
        \multicolumn{2}{c}{fixed noise}
        & \multicolumn{2}{c}{decreasing noise} \\
        $p=3/2$ & $p=4/3$
        & $p=3/2$ & $p=4/3$ \\ 
        \includegraphics[width=0.25\textwidth]{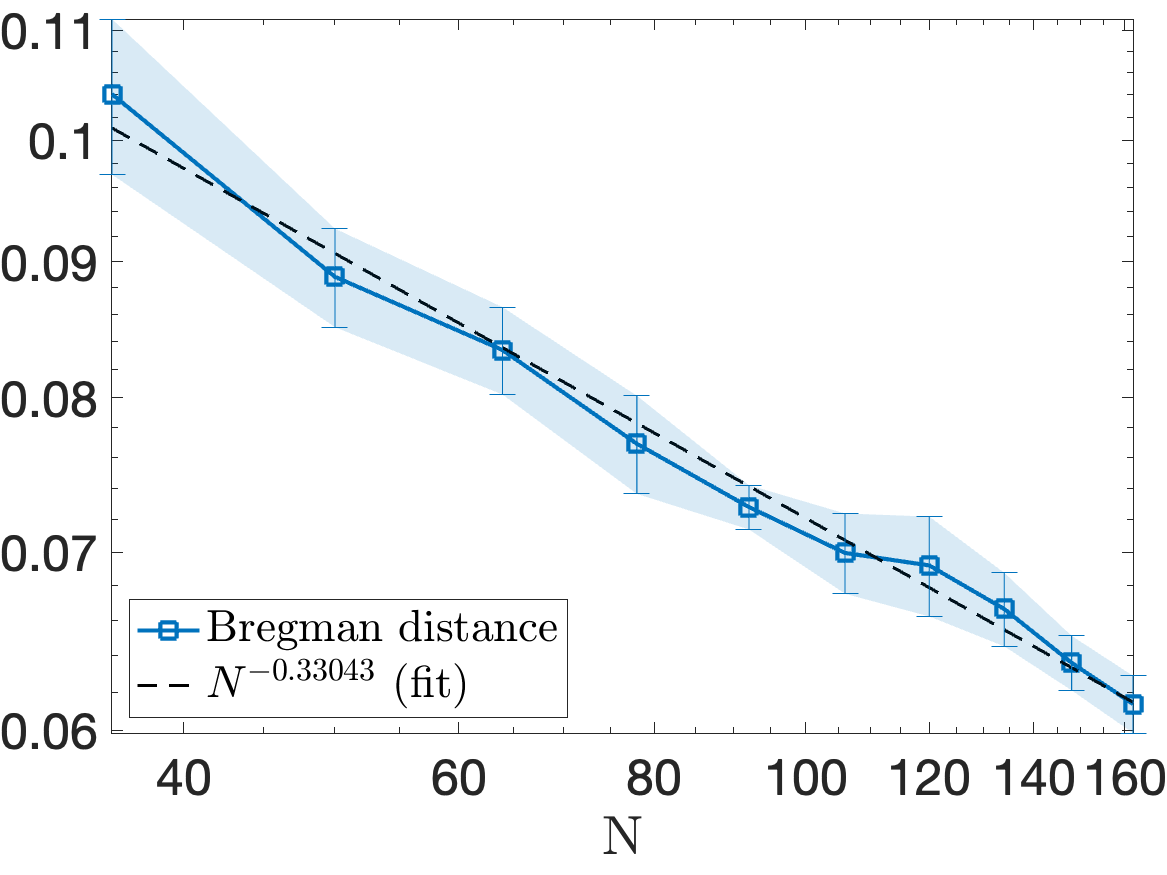} 
        & \includegraphics[width=0.25\textwidth]{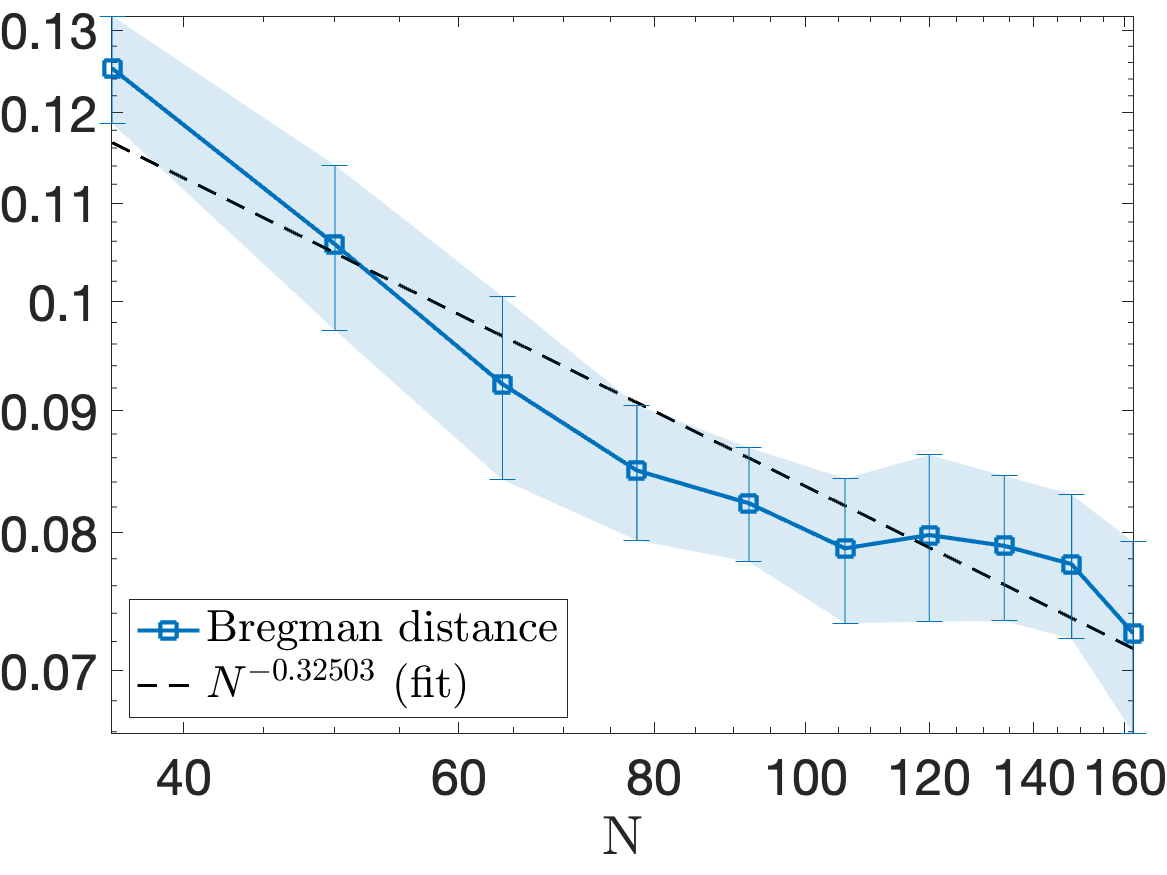} 
        & \includegraphics[width=0.25\textwidth]{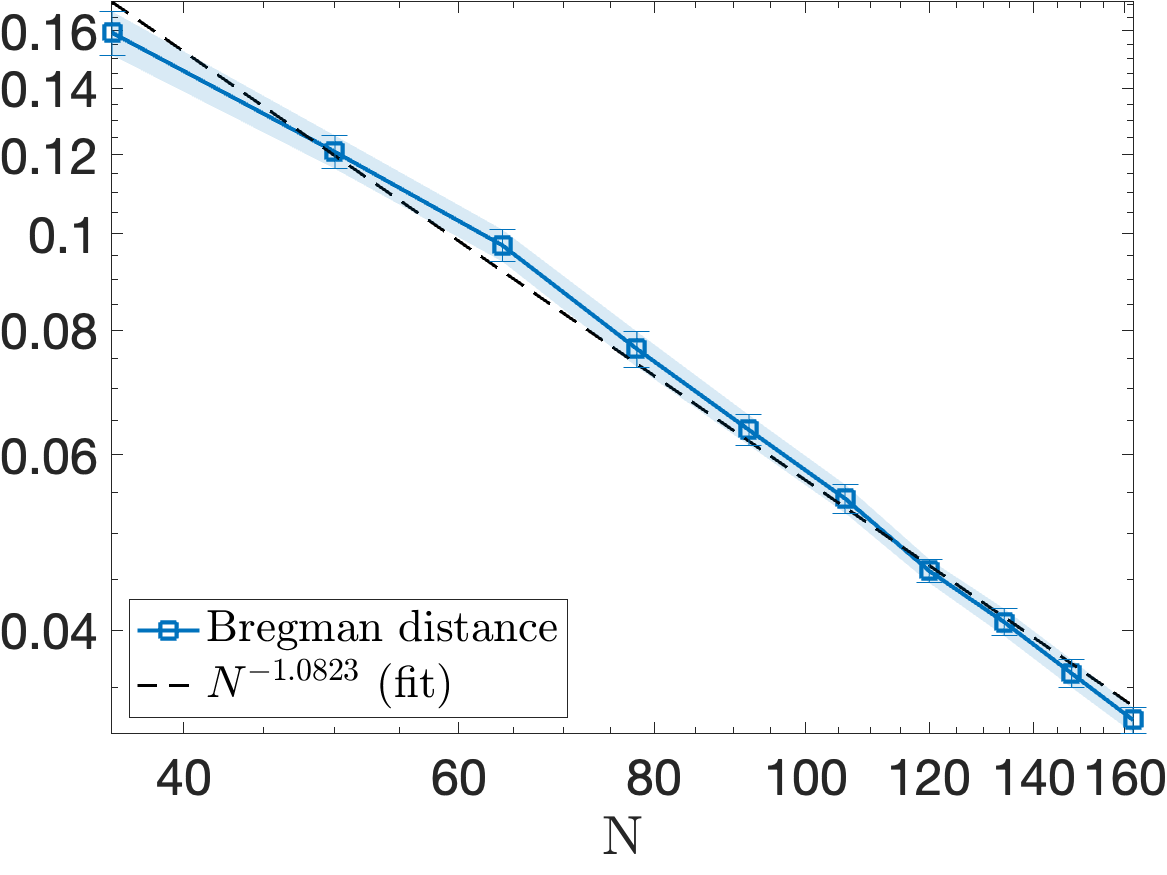}
        & \includegraphics[width=0.25\textwidth]{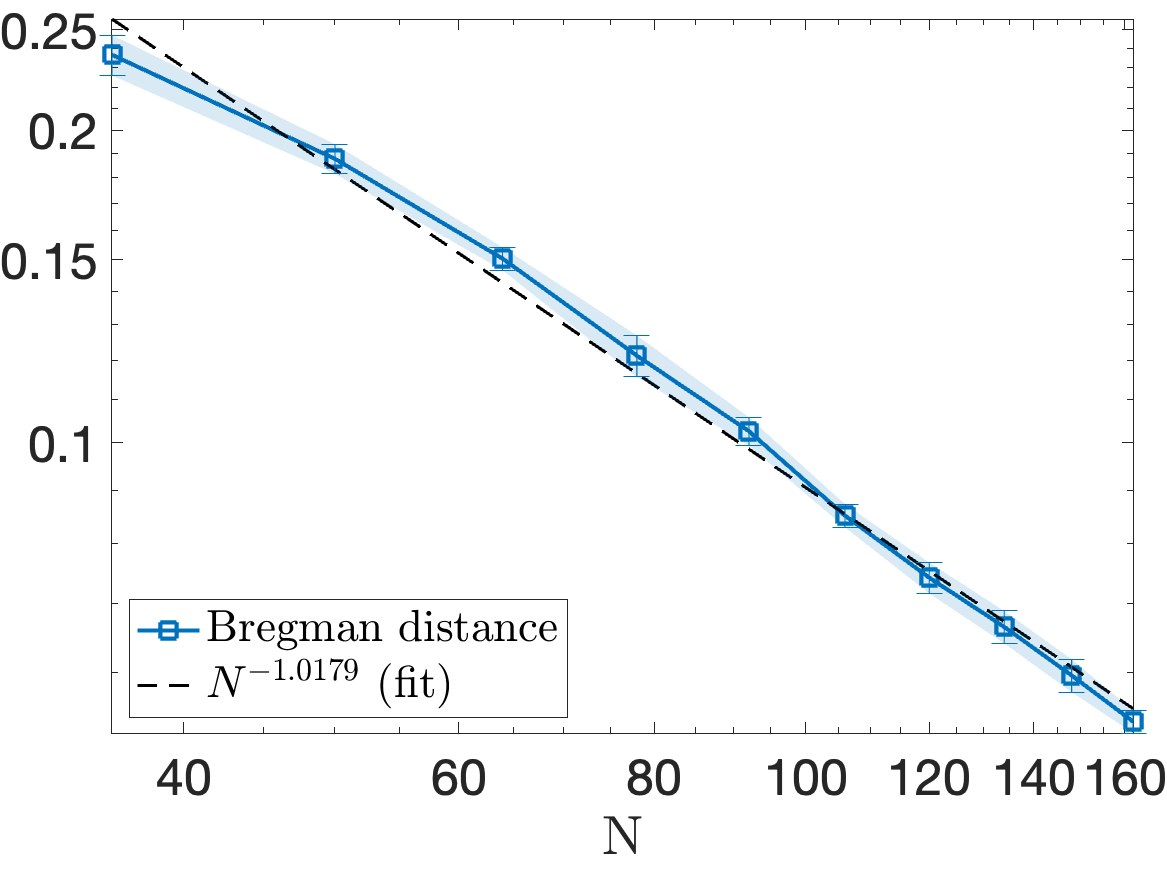} \\
        (a) & (b) & (c) & (d)
    \end{tabular}
    \caption{Approximate decay of the expected value of the Bregman distance, with wavelets-based regularization, for $p=3/2$ ((a) and (c)) and $p=4/3$ ((b) and (d)). The phantom satisfies the strong source conditions. (a)\&(b): Fixed noise regime. (c)\&(d): Decreasing noise regime.}
    \label{fig:Plant_wavelets_strSC}
\end{figure}

\begin{figure}
    \centering
    \begin{tabular}{@{}c@{\;}c@{\;}c@{\;}c@{}}
        \multicolumn{2}{c}{fixed noise}
        & \multicolumn{2}{c}{decreasing noise} \\
        $p=3/2$ & $p=4/3$
        & $p=3/2$ & $p=4/3$ \\
        \includegraphics[width=0.25\textwidth]{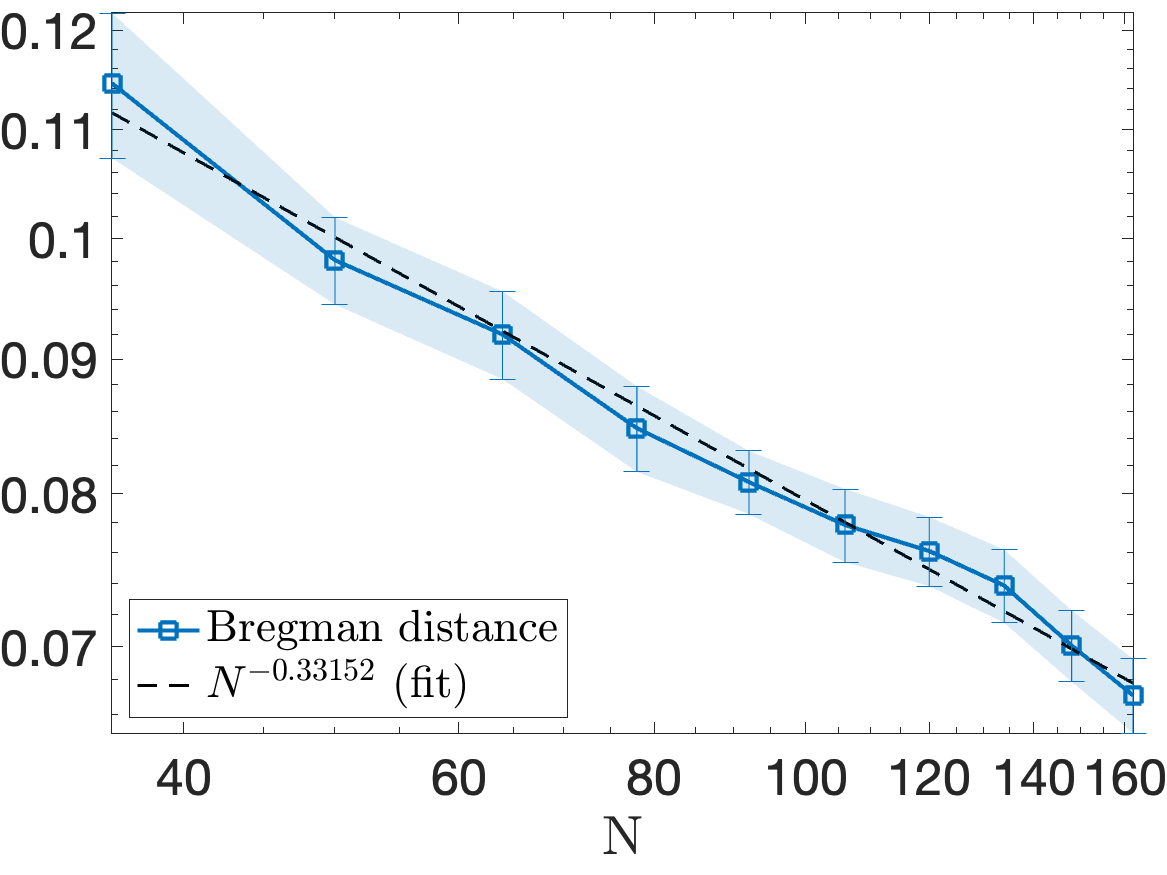} 
        & \includegraphics[width=0.25\textwidth]{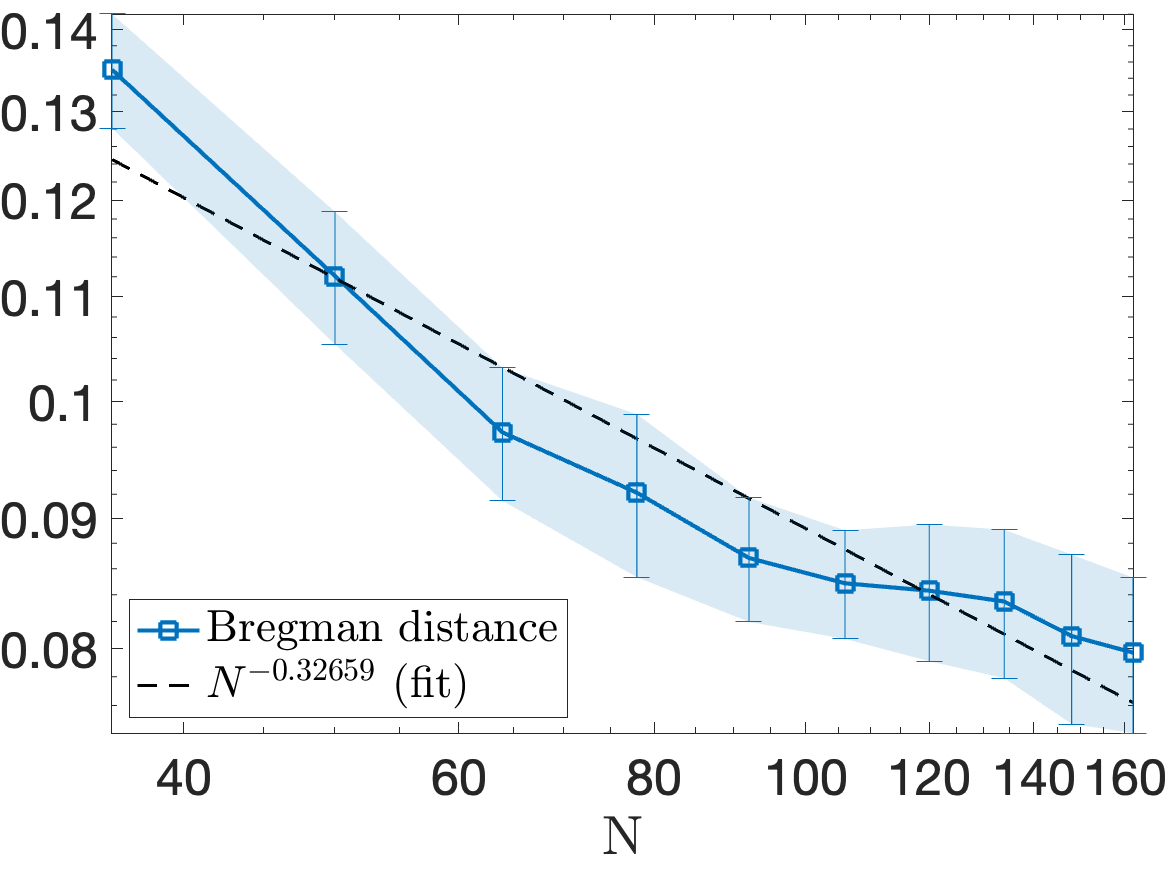} 
        & \includegraphics[width=0.25\textwidth]{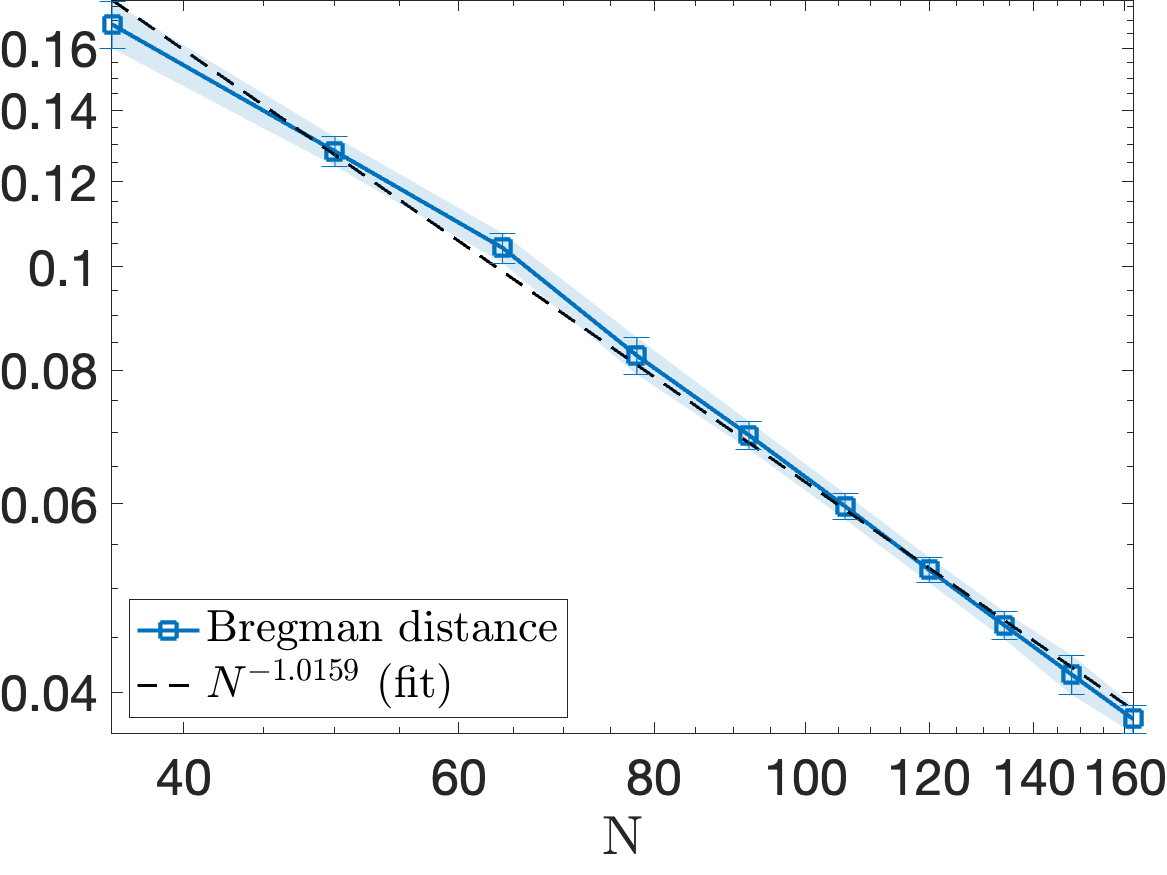}
        & \includegraphics[width=0.25\textwidth]{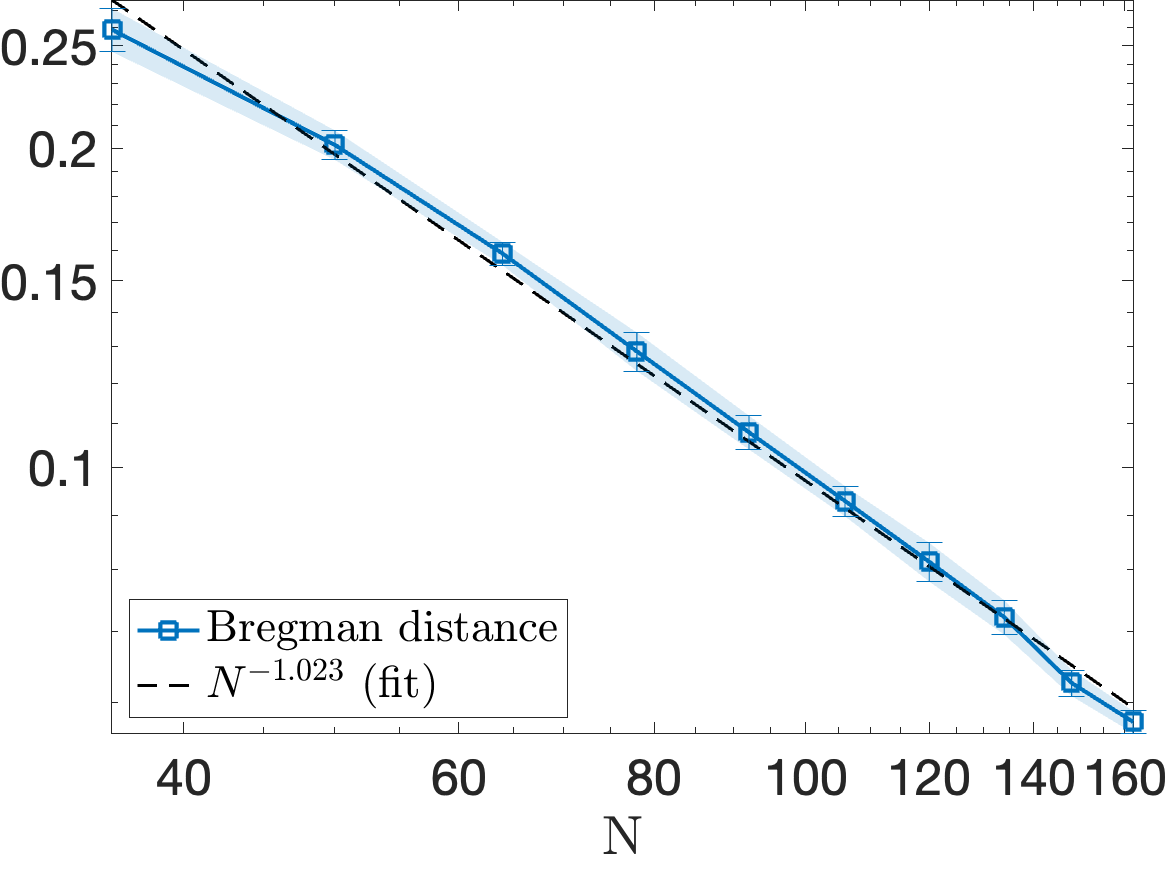} \\
        (a) & (b) & (c) & (d)
    \end{tabular}
    \caption{Approximate decay of the expected value of the Bregman distance, with wavelets-based regularization, for $p=3/2$ ((a) and (c)) and $p=4/3$ ((b) and (d)). The phantom verifies the approximate source conditions. (a)\&(b): Fixed noise regime. (c)\&(d): Decreasing noise regime.}
    \label{fig:Plant_wavelets_appSC}
\end{figure}

In figures \ref{fig:Plant_wavelets_strSC} and \ref{fig:Plant_wavelets_appSC} we report the value of the expected Bregman distance $\E[ D_{\vec{\wtR}}(\f^\delta_{\alpha,N}, \f^\dag)]$ as a function of $N$, with respect to a phantom $\f^\dag$ satisfying either the strong source condition (see figure \ref{fig:Plant_wavelets_strSC}) or the approximate source condition (see figure \ref{fig:Plant_wavelets_appSC}).
In each figure, we consider both decreasing and fixed noise regimes and two different choices for $p$, namely, $p=3/2$ and $p=4/3$.

In order to provide a quantitative assessment, we compare the theoretically predicted decay with the experimental one, which is obtained by computing the best monomial approximation $c N^{\beta}$ of the reported curves. In each plot, the value of the expected Bregman distance is indicated by a blue solid line and its monomial approximation by a black dashed line. We also report the standard deviation error bars (that is, the shaded region in each plots).

According to theorem \ref{thm:general_rate_nonneg}, we expect the same decay of $\E [D_{\vec{\wtR}}(\f^\delta_{\alpha,N}, \f^\dag)]$, independently of $p$: as $N^{-1/3}$ in the fixed noise one, and as $N^{-1}$ in the decreasing noise scenario. We can see in table \ref{tab:decaysWavelets} that the theoretical behaviour is numerically verified. This is also confirmed in figures \ref{fig:Plant_wavelets_strSC} and \ref{fig:Plant_wavelets_appSC}, where we can also gather some further insight on the behaviour of the random variables $D_{\vec{\wtR}}(\f^\delta_{\alpha,N}, \f^\dag)$. In particular, observe that in the fixed noise regime the oscillations around the mean are larger, especially for $p=4/3$. This does not seem to be related to the size of the samples, since even using 100 random realizations delivers a very similar plot. 
Finally, observe that, as expected, the choice of $\f^\dag$ with respect to the source condition (strong or approximate) does not affect at all the verification of the estimates. Actually, once fixed $p$ and the noise regime, the reconstructions providing the estimates in figures \ref{fig:Plant_wavelets_strSC} and \ref{fig:Plant_wavelets_appSC} use very similar (if not the same) values of the regularization parameter $\alpha$.

To summarize, the results reported in this section allow to conclude that the decay of the expected Bregman distance reported in theorem~\ref{thm:general_rate_nonneg} can be verified in the case of the discrete Radon transform, also for the constrained formulation.

\begin{table}[t]
\centering
\caption{Approximate decay of the expected value of the Bregman distance, with wavelets-based regularization, for $p=3/2$ and $p=4/3$.}
\begin{tabular}{c|c|cc|cc}
\multicolumn{1}{c}{} &  & \multicolumn{2}{c|}{strong source condition} 
& \multicolumn{2}{c}{approx.~source condition} \\
scenario & theoretical & $p = 3/2$ & $p=4/3$ & $p = 3/2$ & $p=4/3$  \\
\hline
decreasing noise & $-1$ & $-1.0823$ & $-1.0179$ & $-1.0159$ & $-1.023$ \\
fixed noise & $-1/3$ & $-0.33043$ & $-0.32503$ & $-0.33152$ & $-0.32659$
\end{tabular}
\label{tab:decaysWavelets}
\end{table}

\section{Shearlets come into play}
\label{sec:shearlets}
We now want to extend theorem~\ref{thm:general_rate_nonneg} so that it can be applied to the more general framework of frames, rather than bases, which would allow in particular to consider shearlet-based regularization. This amounts to consider the following regularizer:
\begin{equation}
\label{eq:ShearRegu}
	R(f) := \frac{1}{p} \sum_{\lambda \in \Lambda} m_{\lambda}^p |\langle f, \psi_\lambda\rangle|^p,
\end{equation}
where $\{\psi_{\lambda}\}_{\lambda \in \Lambda}$ is now a shearlet frame and $m_{\lambda}$ plays the role of  weight and will be formally introduced below.
Similarly to the wavelet case, the space $X$ must be chosen carefully: in this case, we consider \textit{shearlet coorbit spaces}, which are generally associated with decay properties of shearlet coefficients of discrete shearlet frames and related to deriving embedding results into Besov spaces~\cite{Dahlke12}.

\subsection{A splash of shearlet coorbit theory}
\label{ssec:shearletstheory}
Shearlets are representation systems specifically designed to provide optimally sparse approximations of a special class of signals, called cartoon-like images~\cite{Kutyniok12introduction}. Here, we give a concise overview of their main properties and the reader is referred to the cited literature for more details.

We introduce the shearlet transform on $L^2(\R^d)$, with $d \geq 2$ even though in the following we will apply it only in the $d=2$ setting.
For $f \in L^2(\R^d)$ we define 
\begin{equation}
    \pi(a,\vec{s},\vec{t})f(x) = f_{a,\vec{s},\vec{t}}(x) = |\det(A_a)|^{-\frac{1}{2}} f(A_a^{-1} S_{\vec{s}}^{-1}(x-t)) 
\end{equation}
where $(a,\vec{s}) \in \R \backslash \{0\} \times \R^{d-1}$ controls the parabolic scaling matrix
$A_a$ and the shearing matrix $S_{\vec{s}}$:
\[
A_a = \begin{pmatrix}
a & \vec{0}^{\text{T}}_{d-1} \\[0.25em]
\vec{0}_{d-1} & \sign(a)|a|^{\frac{1}{d}} \mathbbm{1}_{d-1}
\end{pmatrix}
\qquad \text{and} \qquad
S_{\vec{s}} = \begin{pmatrix}
1 & \vec{s}^{\text{T}} \\[0.25em]
\vec{0}_{d-1} & \mathbbm{1}_{d-1}
\end{pmatrix}
\]
and $\vec{t} \in \R^d$ encodes translations. 
Here, $\pi(a,\vec{s},\vec{t})$ is a unitary representation of the so-called \textit{shearlet group} $\bbS$ defined as the semi-direct product $(\R \backslash \{0\} \times \R^{d-1}) \ltimes \R^d$ with an opportune group operation~\cite{Dahlke12}.

For a well chosen generator function $\psi \in L^2(\R^d)$ (\ie{}, satisfying the admissibility condition of theorem 1 in~\cite{Dahlke12}), the \textit{continuous shearlet transform} is given by:
\begin{equation}
\label{eq:shearCoeff}
    \msSH_{\psi}f(a,\vec{s},\vec{t}) := \<f, \psi_{a,\vec{s},\vec{t}} \> 
    \qquad \text{ for } \; f \in L^2(\R^d)
\end{equation}
where $\psi_{a,\vec{s},\vec{t}} = \pi(a,\vec{s},\vec{t}) \psi$ and $\<\cdot,\cdot\>$ denotes the inner product of $L^2(\R^d)$.
Thus, $\msSH_{\psi}$ analyzes the function $f$ around the location $\vec{t}$ at different resolutions and orientations encoded by the scale and shearing parameters $a$ and $\vec{s}$, respectively. Throughout this work, it is assumed that $\psi$ has compact support.

Shearlet frames have become a well studied research object in the last decade.
Of particular importance for our work are the results in \cite{Dahlke09}, in which shearlet coorbit spaces are introduced. We report here some key definitions and results on shearlet coorbit spaces, which we will later use to prove our main result. Our notation is closely alligned with that of~\cite{Dahlke09}.

Let $\omega(a,\vec{s},\vec{t}) = \omega(a,\vec{s}) \in L^1_{\text{loc}}(\R^d)$ be weight functions and consider the space
\[
    \msH_{1,\omega} = \{ f \in L^2(\R^d): \msSH_{\psi}(f) \in L^1_{\omega}(\bbS) \},
\]
and its anti-dual $\msH_{1,\omega}^{\sim}$. 
Notice that $\msH_{1,\omega} \hookrightarrow L^2(\R^d) \hookrightarrow \msH_{1,\omega}^{\sim}$ forms a Gelfand triplet. In particular, this means that the inner product
$\<\cdot,\cdot\>$ can be extended on the space  $\msH_{1,\omega}^{\sim} \times \msH_{1,\omega}$. Therefore, if $\psi \in \msH_{1,\omega}$ we can extend the definition~\eqref{eq:shearCoeff} of the shearlet transform also to $f \in \msH_{1,\omega}^{\sim}$.

Let 
\[
L^p_m(\bbS) := \{ 
    F \; \text{measurable} \; : \; Fm \in L^p(\bbS)
    \}
\]
where $m$ is a $\omega$-moderate weight on $\bbS$, \ie{}, $m(xyz) \leq \omega(x)m(y)\omega(z)$ for all $x,y,z \in \bbS$, and $1 \leq p \leq \infty$.
For $1 \leq p \leq \infty$ we can introduce the 
shearlet coorbit spaces as Banach spaces defined by
\begin{equation}
    \msSC_{p,m} := \{ 
    f \in \msH_{1,\omega}^{\sim} : \msSH_{\psi}(f) \in L^p_m(\bbS)
    \}
    \label{eq:shearlet_coorbit}
\end{equation}
with norms $\norm{f}_{\msSC_{p,m}} := \norm{\msSH_{\psi}(f)}_{L^p_m(\bbS)}$. 

Starting from the continuous transform, it is possible to derive a discrete shearlet system obtained by sampling the parameter space $\R \backslash \{0\} \times \R^{d-1} \times \R^d$ on a discrete subset $\Lambda$. We denote this system by $\{ \psi_{\lambda} = \pi((a,\vec{s},\vec{t})_{\lambda})\psi \; : \; \lambda \in \Lambda\}$ and introduce the following notation for the \textit{discrete shearlet transform} $\sh_{\psi}: X \rightarrow \ell^p_m$:
    \begin{equation}
    \label{eq:shearCoeffDiscr}
    \sh_{\psi}f := 
    \{ \<f, \psi_{\lambda} \>_{\msH_{1,\omega}^{\sim} \times \msH_{1,\omega}}\}_{\lambda \in \Lambda}.
    \end{equation}

Under suitable (and rather technical, see~\cite[theorem 4]{Dahlke12}) assumptions on $\psi$ and $\{\psi_{\lambda}\}_{\lambda \in \Lambda}$, it is possible to show that $\{\psi_{\lambda}\}_{\lambda \in \Lambda}$ provides a Banach frame for $\msSC_{p,m}$, namely:
\begin{itemize}
    \item[(i)] (\textbf{representation}) $f \in \msSC_{p,m}$  if and only if $\sh_{\psi}f \in \ell^p_m$. 
    \item[(ii)] (\textbf{norm equivalence})There exists two constants $0 < D \leq D' < \infty$ such that
    \[
    D \norm{f}_{\msSC_{p,m}} \leq 
        \norm{\sh_{\psi}f}_{\ell^p_m}
        \leq 
        D' \norm{f}_{\msSC_{p,m}}.
    \]
    \item[(iii)] (\textbf{reconstruction}) There exists a bounded, linear reconstruction operator $\sh_{\psi}^{\dag}$ from $\ell^p_m$ to $\msSC_{p,m}$ such that $\sh_{\psi}^{\dag} \left( \sh_{\psi}f \right) = f$.
\end{itemize}
In view of this characterization, if we now choose $X = \{ f \in \msSC_{p,m}: \supp(f) \subset \Omega\}$, being $\Omega = [0,1]^2$, equipped with the $\ell^p_m$-norm we have
\begin{equation} \label{eq:Brilliant_norm}
    \frac{1}{p} \norm{f}_X^p = 
    \frac{1}{p} \norm{\sh_{\psi}f}_{\ell^p_m}^p = 
    \frac{1}{p} \sum_{\lambda \in \Lambda} m_{\lambda}^p |\< f, \psi_\lambda\>_{\msH_{1,\omega}^{\sim} \times \msH_{1,\omega}}|^p
\end{equation}
which means that the regularization functional~\eqref{eq:ShearRegu} can be written as needed for our scopes. Notice that, with a slight abuse of notation, we use $m$ to denote the weight both in the continuous setting and as the sequence obtained by collecting $m_\lambda = m(a_\lambda,\vec{s}_\lambda,\vec{t}_\lambda)$ over the discrete set $\Lambda$.

\begin{remark}
\label{rm:BanachFramesExt}
The properties (i)-(iii) ensured by~\cite[theorem 4]{Dahlke12} allow us to consider $\{\psi_\lambda\}_{\lambda \in \Lambda}$ as a Banach frame for the Banach space $X = \msSC_{p,m}$, according to the definition provided in the seminal work~\cite{casazza1999frames}. The results contained in this section can be generalized from the shearlet case to any transform associated with the functions $\{\psi_\lambda\}_{\lambda \in \Lambda}$ for which it is possible to find a Banach space $X$ such that $\{\psi_\lambda\}_{\lambda \in \Lambda}$ is a Banach frame for it, in relation to a sequence space $\ell^p$ with $1< p \leq 2$. 
\end{remark}

\subsection{Convergence rates for shearlet regularization} 
\label{ssec:ConvRatesShearlets}
We are now ready to extend the results in~\cite{Bubba21} to shearlet-based regularization, possibly constrained to the non-negative orthant. From now on, we fix $X =\{ f \in \msSC_{p,m} :  \supp(f)\subset \Omega \}$, $\Omega = [0,1]^2$, equipped with the $\ell^p_m$-norm. 
Let $R(f) = \frac{1}{p}\norm{\sh_{\psi} f }_{\ell^p_m}^p$ and, for any $c \in \ell^p_m$, $\whR(c) = \frac{1}{p}\norm{c}_{\ell^p_m}^p$ so that $R(f) = \whR(\sh_{\psi}f)$. 

In view of \eqref{eq:Brilliant_norm}, this choice of $X$ ensures that the machinery of section \ref{sec:SettingStage} can be extended to the shearlet case. In particular, propositions \ref{prop:apriori} and \ref{prop:aux_convex2}, lemma \ref{lem:xuroach} and theorem \ref{thm:bregman_dist_gen} are immediately verified, and in order to prove theorem \ref{thm:general_rate} we only need to ensure that conditions \eqref{eq:ass_on_phomog_rate1}-\eqref{eq:ass_on_phomog_rate2} are verified also in the shearlet coorbit spaces.
We start by characterizing the convex conjugate $R^\star$ of $R$.

\begin{proposition}
Consider $p>1$. Then, for $y \in X^*$, we have
\[
R^{\star}(y) \leq \frac{1}{q} \| (\sh_{\psi}^{\dag})^* y \|_{\ell_{1/m}^q}^q.
\]
\end{proposition}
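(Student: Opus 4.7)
The plan is to exploit the factorization $R(f) = \widehat{R}(\sh_\psi f)$ together with the reconstruction property $\sh_\psi^\dag \sh_\psi = \Id_X$ to reduce the computation of $R^\star$ to the computation of the convex conjugate of $\widehat{R}$, which is a purely sequence-space calculation.

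First, I would rewrite the duality pairing. Since $\sh_\psi : X \to \ell^p_m$ has $\sh_\psi^\dag : \ell^p_m \to X$ as a bounded left inverse, for any $y \in X^\ast$ and any $f \in X$ one has
\begin{equation*}
\langle y, f \rangle_{X^\ast \times X} = \langle y, \sh_\psi^\dag(\sh_\psi f)\rangle_{X^\ast \times X} = \langle (\sh_\psi^\dag)^\ast y, \sh_\psi f \rangle_{\ell^q_{1/m} \times \ell^p_m},
\end{equation*}
where I use that $(\ell^p_m)^\ast = \ell^q_{1/m}$ under the natural pairing. Substituting this into the definition of the conjugate and changing the variable of the supremum to $c = \sh_\psi f \in \ran(\sh_\psi) \subset \ell^p_m$ gives
\begin{equation*}
R^\star(y) = \sup_{f \in X}\bigl\{\langle y,f\rangle - \tfrac{1}{p}\|\sh_\psi f\|_{\ell^p_m}^p\bigr\} = \sup_{c \in \ran(\sh_\psi)}\bigl\{\langle (\sh_\psi^\dag)^\ast y, c\rangle - \widehat{R}(c)\bigr\}.
\end{equation*}

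Enlarging the feasible set from $\ran(\sh_\psi)$ to all of $\ell^p_m$ can only increase the supremum, so $R^\star(y) \leq \widehat{R}^\star((\sh_\psi^\dag)^\ast y)$. It remains to compute $\widehat{R}^\star$ explicitly. Because $\widehat{R}$ is a weighted $p$-th power, the computation separates coordinate-wise: setting $b_\lambda = m_\lambda c_\lambda$ reduces the problem to the standard conjugate pair $\tfrac{1}{p}\|\cdot\|_{\ell^p}^p \leftrightarrow \tfrac{1}{q}\|\cdot\|_{\ell^q}^q$ via Fenchel--Young with equality, yielding
\begin{equation*}
\widehat{R}^\star(d) = \tfrac{1}{q}\|d\|_{\ell^q_{1/m}}^q
\qquad \text{for every } d \in \ell^q_{1/m}.
\end{equation*}
Applying this with $d = (\sh_\psi^\dag)^\ast y$ yields the claim.

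The only mildly subtle point, which I would flag explicitly, is that the inequality (rather than equality) is sharp in the frame setting precisely because $\sh_\psi$ need not be surjective onto $\ell^p_m$; in the orthonormal basis case studied in the wavelet section one recovers equality. Everything else reduces to handling the duality brackets between $(X,X^\ast)$ and $(\ell^p_m, \ell^q_{1/m})$ correctly, and to the textbook conjugate of $\tfrac{1}{p}|\cdot|^p$ on $\mathbb{R}$.
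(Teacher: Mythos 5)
Your proposal is correct and follows essentially the same route as the paper: reduce $R^\star$ to a supremum over $\ran(\sh_\psi)\subset\ell^p_m$ via the reconstruction identity, enlarge the feasible set to obtain $R^\star(y)\leq \whR^\star((\sh_\psi^\dag)^*y)$, and then compute $\whR^\star(d)=\tfrac{1}{q}\|d\|_{\ell^q_{1/m}}^q$ (the paper does this last step by explicit component-wise optimization rather than by rescaling to the standard scalar conjugate pair, but this is only a cosmetic difference). Your closing remark about non-surjectivity of $\sh_\psi$ being the reason for inequality rather than equality matches the paper's own observation.
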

\begin{proof}
Recall the definition of the convex conjugate operator: for $y \in X^*$,
\begin{align*}
R^\star(y) &= 
\sup_{f \in X} \{ \< y,f\>_{X^*\times X} - \whR(\sh_{\psi} f)\} \\  
&= \sup_{c \in \imag(\sh_{\psi})} \left\{ \< y, \sh_{\psi}^{\dag} c \>_{X^*\times X} - \whR \left(\sh_{\psi} (\sh_{\psi}^{\dag} c) \right) \right\} 
\end{align*}
where $\imag(\sh_{\psi})\subset \ell^p_m$ denotes the range of the operator $\sh_{\psi}$ which does not necessarily coincide with $\ell^p_m$ since $\sh_{\psi}$ is not surjective. Nevertheless, if $c \in \imag(\sh_\psi)$, then $\sh_\psi \sh_\psi^\dag c = c$, hence
\[
R^\star(y) =
\sup_{c \in \imag(\sh_{\psi})} 
    \left\{ \< (\sh_{\psi}^{\dag})^* y, c\>_{(\ell^p_m)^*\times \ell^p_m} - 
    \whR \left( c \right) \right\},
\]
where $(\sh_{\psi}^{\dag})^* \colon X^* \rightarrow (\ell^p_m)^*$ is the (Banach) adjoint of the reconstruction operator. Notice also that $(\ell^p_m)^* = \ell^q_{1/m}$ and the pairing product $\< \cdot, \cdot \>_{(\ell^p_m)^*\times \ell^p_m}$ is the extension of the $\ell^2$ inner product. Finally,
\[
R^\star(y) \leq
\sup_{c \in \ell^p_m} 
    \left\{ \< (\sh_{\psi}^{\dag})^* y, c \>_{(\ell^p_m)^*\times \ell^p_m} - 
    \whR \left( c \right) \right\} = \whR^\star((\sh_\psi^\dag)^*y).
\]
For the sake of completeness, we also derive the expression of the convex dual $\whR^\star \colon \ell^q_{1/m} \rightarrow \R$. Consider a sequence $d \in \ell^q_{1/m} = (\ell^p_m)^*$: we have
\begin{align*}
\whR^\star(d) &= \sup_{c \in \ell^p_m} \left\{ 
\< d, c \>_{(\ell^p_m)^*\times \ell^p_m} - \whR(c) \right\} \\
& = \sup_{c \in \ell^p_m} \left\{ 
\sum_{\lambda \in \Lambda} \left( d_\lambda c_\lambda - \frac{1}{p} m_\lambda^p |c_\lambda|^p \right)
\right\}.
\end{align*}
Since the functional $J(c) = \< d, c \>_{(\ell^p_m)^*\times \ell^p_m} - \whR(c)$ is concave and differentiable, its maximum is achieved by the sequence $c^*$ which satisfies $\nabla J(c^*) = 0$. Each component of the gradient of $J$ can be computed as follows: 
\[ 
[\nabla J(c)]_\lambda = \partial_{c_\lambda} J(c) = d_\lambda - m_\lambda^p \sign(c_\lambda)|c_\lambda|^{p-1}.
\]
Therefore,  $\nabla J(c^*) = 0$ can be solved component-wise to get
\[
c^*_\lambda = m_\lambda^{-\frac{p}{p-1}} \sign(d_\lambda)|d_\lambda|^{\frac{1}{p-1}}.
\]
As a consequence 
\[
\begin{aligned}
\whR^\star(d) = J(c^*) &= \sum_{\lambda \in \Lambda} \left(  m_\lambda^{-\frac{p}{p-1}} \sign(d_\lambda)|d_\lambda|^{\frac{1}{p-1}} d_\lambda - \frac{1}{p} m_\lambda^p  m_\lambda^{-\frac{p^2}{p-1}} |d_\lambda|^{\frac{p}{p-1}} \right) \\
& = \sum_{\lambda \in \Lambda} \left(  m_\lambda^{-\frac{p}{p-1}} |d_\lambda|^{\frac{p}{p-1}} - \frac{1}{p} m_\lambda^{-\frac{p}{p-1}} |d_\lambda|^{\frac{p}{p-1}} \right) = \frac{1}{q}  \sum_{\lambda \in \Lambda} m_\lambda^{-q} |d_\lambda|^{q}
\end{aligned}
\]
In conclusion, for any sequence $d \in (\ell^p_m)^*$, $\whR^\star(d) = \frac{1}{q} \norm{d}_{\ell^q_{1/m}}^q$, which allows to conclude the thesis.
\end{proof}

In order to state our main result, we need to make the following assumptions.
\begin{itemize}
    \item[(S1)] The ground truth $f^\dag\in X$ satisfies a \emph{classical source condition},
    \begin{equation}
	 \exists \; w \in Z \; \text{ s.t. } \;  r^\dag = A_\mu^* w
	 \qquad \text{ where } \; r^\dag = \partial R(f^\dag).
	\label{eq:SC_Coorbit}
    \end{equation}
    \item[(S2)] Let $\{e_j\}_{j \in \N}$ be the canonical basis of $\ell^p$, \ie{}, the sequences defined by $[e_j]_i = \delta_{ij}$.   Then, the operator $A$ satisfies
    \begin{equation*}
	\sum_{j=1}^\infty m_j^{-q} \norm{A \sh_{\psi}^\dag e_j}_{\infty}^q < \infty.
    \end{equation*}
\end{itemize}
Notice that the set $\{ \sh_{\psi}^\dag e_j\}_j$ can be interpreted as a dual frame of $\{\psi_\lambda\}_\lambda$ on $X$. Indeed, for every $f \in X$, it holds $f = \sh_\psi^\dag \sh_\psi f$ and, by representing $\sh_\psi f$ in the orthogonal basis $\{e_j\}_j$, we have
\[
f = \sh_\psi^\dag \bigg( \sum_{j =1}^\infty [\sh_{\psi} f]_j e_j \bigg) = \sum_{j =1}^\infty \< f,\psi_j \>_{\msH_{1,\omega}^{\sim} \times \msH_{1,\omega}} \sh_\psi^\dag e_j,
\]
which amounts to saying that $\{\sh_\psi^\dag e_j\}_j \subset X$ is a dual frame of the shearlet frame. We nevertheless point out that $\{\sh_\psi^\dag e_j\}_j $ is not a shearlet frame.
    
We can now replicate the result \cite[theorem 5.4]{Bubba21} regarding the decay of the quantity
\[
\mathscr{R}(\beta,\bu;f^\dag) = \inf_{\bar{w} \in V_N} \left\{ R^\star(r^\dag - \Abu^*\bar{w}) +\frac{\beta}{2}\|\bar{w}\|_{V_N}^2\right\},
\]

\begin{proposition}
\label{prop:shearlet_rate1}
Under assumptions (S1)-(S2), we have
\[
\mathbb{E}[\mathscr{R}(\beta,\bu,f^\dag)] \lesssim N^{-\frac{q}{2}} + \beta.
\]
\end{proposition}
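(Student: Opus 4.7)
The plan is to mimic the strategy used in the wavelet-based proof of \cite[Theorem 5.4]{Bubba21}, now exploiting the dual frame $\{\sh_\psi^\dag e_j\}_j$ to replace the orthonormal wavelet basis. Rather than taking the infimum over $\bar w \in V_N$, I would pick a concrete candidate, namely the pointwise sampling of the source $w$ given by (S1). More precisely, set $\bar w = (w(u_n))_{n=1}^N \in V_N$, which is meaningful because $w \in Z \subset \mathcal{C}(U;V)$. Then
\[
\langle \Abu^*\bar w, f\rangle_{X^* \times X} = \frac{1}{N}\sum_{n=1}^N \langle w(u_n),(Af)(u_n)\rangle_V
\]
is the Monte Carlo approximation of $\langle A_\mu^* w, f\rangle_{X^*\times X} = \langle r^\dag, f\rangle_{X^*\times X}$.

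Next I would bound the residual with the dual formula for $R^\star$ proved just above:
\[
R^\star(r^\dag - \Abu^*\bar w)\leq \frac{1}{q}\sum_{j=1}^\infty m_j^{-q}\bigl|\langle r^\dag - \Abu^*\bar w,\sh_\psi^\dag e_j\rangle_{X^*\times X}\bigr|^q.
\]
For each fixed $j$, the scalar $\xi_j := \langle r^\dag - \Abu^*\bar w,\sh_\psi^\dag e_j\rangle_{X^*\times X}$ is exactly the Monte Carlo error for integrating $\phi_j(u) := \langle w(u),(A\sh_\psi^\dag e_j)(u)\rangle_V$ with respect to $\mu$. Since $u_1,\ldots,u_N$ are i.i.d.\ samples from $\mu$, $\xi_j$ has zero mean and variance $\frac{1}{N}\mathrm{Var}_\mu(\phi_j) \leq \frac{1}{N}\|\phi_j\|_\infty^2 \leq \frac{1}{N}\|w\|_\infty^2\|A\sh_\psi^\dag e_j\|_\infty^2$ by Cauchy--Schwarz in $V$. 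Because $q\in[1,2]$, Jensen's inequality gives $\mathbb{E}|\xi_j|^q \leq (\mathbb{E}|\xi_j|^2)^{q/2}\lesssim N^{-q/2}\|A\sh_\psi^\dag e_j\|_\infty^q$.

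Tonelli and the monotone convergence theorem then let me swap expectation and the sum over $j$, yielding
\[
\mathbb{E}[R^\star(r^\dag - \Abu^*\bar w)] \;\lesssim\; N^{-q/2}\sum_{j=1}^\infty m_j^{-q}\|A\sh_\psi^\dag e_j\|_\infty^q,
\]
where the series is finite precisely by assumption (S2). The penalty term is handled directly: $\mathbb{E}\|\bar w\|_{V_N}^2 = \int_U \|w(u)\|_V^2\,\mu(du) = \|w\|_{Y_\mu}^2 < \infty$, so $\mathbb{E}\bigl[\tfrac{\beta}{2}\|\bar w\|_{V_N}^2\bigr]\lesssim \beta$. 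Combining the two bounds and using that taking the infimum only decreases the value yields $\mathbb{E}[\mathscr{R}(\beta,\bu;f^\dag)]\lesssim N^{-q/2} + \beta$.

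The main obstacle is really just the identification of the right candidate $\bar w$ and the verification that (S2) is the correct summability condition to make the dual frame argument go through in place of the wavelet orthonormality used in \cite{Bubba21}; the Monte Carlo variance estimate and the Jensen step for $q\in[1,2]$ are then essentially routine, provided one has set up $\{\sh_\psi^\dag e_j\}_j$ as a dual frame so that the pairing $\langle\cdot,\sh_\psi^\dag e_j\rangle$ correctly extracts the $j$-th component of $(\sh_\psi^\dag)^*$.
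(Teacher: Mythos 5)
Your overall architecture matches the paper's proof exactly: you choose the same candidate $\bar w = \Sbu w = (w(u_n))_{n=1}^N$, bound $R^\star(r^\dag - \Abu^*\bar w)$ through the dual formula $\frac{1}{q}\|(\sh_\psi^\dag)^*(\cdot)\|_{\ell^q_{1/m}}^q$, identify each component as a zero-mean Monte Carlo error for $\phi_j(u)=\langle w(u),(A\sh_\psi^\dag e_j)(u)\rangle_V$, control the penalty term by $\mathbb{E}\|\Sbu w\|_{V_N}^2=\|w\|_{Y_\mu}^2$, and invoke (S2) for the summability over $j$. All of that is correct and is precisely what the paper does.

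However, there is a genuine gap in the moment estimate. You assert that $q\in[1,2]$ and apply Jensen in the form $\mathbb{E}|\xi_j|^q\leq(\mathbb{E}|\xi_j|^2)^{q/2}$. But here $q$ is the H\"older conjugate of $p$ with $p\in(1,2)$, so $q=p/(p-1)>2$ (e.g.\ $q=3$ for $p=3/2$, $q=11$ for $p=1.1$). For $q>2$ Jensen's inequality runs in the \emph{opposite} direction: $(\mathbb{E}|\xi_j|^2)^{q/2}\leq\mathbb{E}|\xi_j|^q$, so the variance bound alone cannot control the $q$-th moment. The conclusion $\mathbb{E}|\xi_j|^q\lesssim N^{-q/2}\|A\sh_\psi^\dag e_j\|_\infty^q$ is still true, but it requires a genuinely stronger input than the variance: the paper obtains it by noting that the summands $\xi_n^j$ are \emph{uniformly bounded} (via Cauchy--Schwarz, $|\phi_j(u)|\leq\|w\|_\infty\|A\sh_\psi^\dag e_j\|_\infty$, which uses $w\in Z\subset\mathcal{C}(U;V)$), applying Hoeffding's inequality for bounded random variables to the sum $\sum_{n=1}^N\xi_n^j$, and then integrating the tail,
\begin{equation*}
\mathbb{E}\Big[\Big|\sum_{n=1}^N\xi_n^j\Big|^q\Big]=\int_0^\infty t^{q-1}\,\mathbb{P}\Big(\Big|\sum_{n=1}^N\xi_n^j\Big|>t\Big)\,dt\lesssim N^{q/2}\|w\|_\infty^q\|A\sh_\psi^\dag e_j\|_\infty^q,
\end{equation*}
which combined with the prefactor $N^{-q}$ yields the $N^{-q/2}$ rate. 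Alternatively a Rosenthal or Marcinkiewicz--Zygmund inequality would do. Replacing your Jensen step with this concentration argument closes the gap; the rest of your proof then goes through unchanged.
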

\begin{proof}

Thanks to the source condition (S1), $r^\dag = A_\mu^* w$ for some $w \in Z$: hence, we can consider $\bar{w} = \Sbu w$ in~\eqref{eq:Rbeautiful} and deduce
\begin{equation*}
	\mathscr{R}(\beta, \bu; f^\dag) \leq 
	\frac{1}{q} \norm{(\sh_\psi^\dag)^*(A_\mu^* - \Abu^* \Sbu)w}_{\ell^q_{1/m}}^q + \frac{\beta}{2} \norm{\Sbu w}_{V_N}^2.
\end{equation*}
The expectation of the second term coincides with $\frac{\beta}{2} \norm{w}_{Y_\mu}^2$ and can be bounded by (a constant times) $\beta$ due to the continuous embedding of $Z$ into $Y_\mu$.
For the first term, we can write each component of the sequence as
\begin{align*}
\< (\sh_{\psi}^\dag)^* 
    (A_\mu^*-\Abu^* \Sbu)w, e_j \>_{(\ell_m^p)^*\times \ell_m^p} &= 
    \< (A_\mu^*-\Abu^* \Sbu)w, \sh_{\psi}^\dag e_j \>_{X^*\times X} \\
    &= \< w, A_\mu \sh_{\psi}^\dag e_j \>_{Y_{\mu}} 
    - \< \Sbu w, \Abu \sh_{\psi}^\dag e_j \>_{V_N} \\
    &= \frac{1}{N} \sum_{n=1}^N \left( \< w, A_\mu \sh_{\psi}^\dag e_j\>_{Y_{\mu}} - 
    \< w(u_n), (A \sh_{\psi}^\dag e_j)(u_n) \>_{V_N}
    \right) 
\end{align*}
We now set $\xi^{j}_n = \< w, A_\mu \sh_{\psi}^\dag e_j\>_{Y_{\mu}} - 
    \< w(u_n), (A \sh_{\psi}^\dag e_j)(u_n) \>_{V_N}$: then, the random variables $\xi^{j}_n$ are zero-mean and i.i.d.
Furthermore, by applying the Cauchy-Schwarz's inequality we have that
\begin{equation*}
 \left|	\< w(u), (A \sh_{\psi}^\dag e_j)(u)\>_V \right| \leq \norm{w}_\infty \norm{A \sh_{\psi}^\dag e_j}_{\infty}
\end{equation*}
for any $u\in U$. Therefore, for each $j$ the random variables $\xi^j_n$, $n=1,...,N$, are bounded uniformly according to
\begin{equation*}
	\< w, A \sh_{\psi}^\dag e_j \>_{Y_\mu} - \norm{w}_\infty \norm{A \sh_{\psi}^\dag e_j}_{\infty} \leq \xi^{j}_n \leq \langle w, A \sh_{\psi}^\dag e_j \rangle_{Y_\mu} + \norm{w}_\infty \norm{A \sh_{\psi}^\dag e_j}_{\infty},
\end{equation*}
and the terms $\norm{A \sh_\psi^\dag e_j}_{\infty}$ are uniformly bounded thanks to assumption (S2).
As in \cite[theorem 5.4]{Bubba21}, the proof is ended by the Hoeffding's inequality for bounded random variables,
\[
\begin{aligned}
	\E \left[ \frac{1}{q}\norm{(\sh_\psi^\dag)^* 
	    (A_\mu^* - \Abu^* \Sbu)w)}^q_{\ell^q_{1/m}} \right] & = 
	\frac{1}{q} \sum_{j=1}^\infty N^{-q}m_j^{-q} \mathbb{E} \left[ \left|\sum_{n=1}^N \xi^{j}_n\right|^q \right] \\
	&= \frac{1}{q} \sum_{j=1}^\infty N^{-q}m_j^{-q} \int_0^\infty t^{q-1} \ \mathbb{P}\left(\left|\sum_{n=1}^N \xi^{j}_n\right| > t\right) dt \\
	& \leq  \frac{2}{q} \sum_{j=1}^\infty  N^{-q}m_j^{-q} \int_0^\infty t^{q-1} \exp\left( -\frac{t^2}{2N \norm{w}_\infty^2 \norm{A \sh_\psi^\dag e_j}_{\infty}^2 } \right) dt	\\
	& = \frac{2}{q} \sum_{j=1}^\infty  N^{-\frac{q}{2}}m_j^{-q} \norm{w}_\infty^q \norm{A \sh_\psi^\dag e_j}_{\infty}^q \int_0^\infty s^{q-1}\exp \left(-\frac{1}{2} s^2 \right) ds \\
    & \lesssim N^{-\frac{q}{2}}  \sum_{j=1}^\infty  m_j^{-q} \norm{A \sh_\psi^\dag e_j}_{\infty}^q.
\end{aligned}
\]
\end{proof}

Analogously, we can extend \cite[proposition 5.5]{Bubba21} to the current setting as follows.
\begin{proposition}
\label{prop:shearlet_rate2}
Under assumptions (S1)-(S2), we have that
\begin{equation*}
	\E [R^\star (\Abu^* \epsilon_N)] \lesssim N^{-q/2}.
\end{equation*}
\end{proposition}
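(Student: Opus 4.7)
The plan is to mirror the argument used for Proposition 5.5 in \cite{Bubba21} (on which the preceding Proposition~\ref{prop:shearlet_rate1} is also patterned), replacing the wavelet orthonormal basis by the dual frame $\{\sh_\psi^\dag e_j\}_{j \in \N}$ and exploiting the bound on $R^\star$ we just derived. First I would use the inequality $R^\star(\Abu^* \epsilon_N) \leq \frac{1}{q} \|(\sh_\psi^\dag)^* \Abu^* \epsilon_N\|_{\ell^q_{1/m}}^q$ to split the quantity to bound into a sum indexed by $j \in \N$ of $j$-th components $\xi^{j} := \langle (\sh_\psi^\dag)^* \Abu^* \epsilon_N, e_j\rangle_{(\ell^p_m)^* \times \ell^p_m}$. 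Dualizing and then unpacking the sampling operator exactly as in the proof of Proposition~\ref{prop:shearlet_rate1} gives
\[
\xi^{j} = \langle \epsilon_N, \Abu \sh_\psi^\dag e_j \rangle_{V_N} = \frac{1}{N}\sum_{n=1}^N \xi^{j}_n, \qquad \xi^{j}_n := \langle \epsilon_N^n, (A\sh_\psi^\dag e_j)(u_n)\rangle_V.
\]

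Conditionally on $\bu$, the variables $\xi^{j}_n$ are independent and zero-mean (since $\epsilon$ is independent of $u$ and has zero mean), and by Cauchy--Schwarz combined with condition (A1)/the embedding into $\mathcal{C}(U;V)$, $|\xi^j_n| \leq \|A\sh_\psi^\dag e_j\|_{\infty} \|\epsilon_N^n\|_V$. The moment assumption~\eqref{eq:randomnoise} on $\epsilon$ then promotes this pointwise bound to a Bernstein-type estimate on the moments of $\xi^j_n$, with variance proxy controlled by $\|A\sh_\psi^\dag e_j\|_\infty^2$. This is exactly the setting in which the moment version of Bernstein's inequality (as invoked in \cite[proposition 5.5]{Bubba21}) can be applied to deliver
\[
\E\!\left[\Big|\tfrac{1}{N}\textstyle\sum_{n=1}^N \xi^{j}_n\Big|^q\right] \lesssim N^{-q/2}\, \|A\sh_\psi^\dag e_j\|_{\infty}^q,
\]
with an implicit constant depending only on $q$ and on $\Sigma$.

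Summing over $j$, weighted by $m_j^{-q}$, and invoking assumption (S2) to ensure the series $\sum_j m_j^{-q}\|A\sh_\psi^\dag e_j\|_\infty^q$ converges, yields
\[
\E[R^\star(\Abu^*\epsilon_N)] \leq \tfrac{1}{q}\sum_{j \in \N} m_j^{-q} \E[|\xi^{j}|^q] \lesssim N^{-q/2} \sum_{j \in \N} m_j^{-q}\|A\sh_\psi^\dag e_j\|_{\infty}^q \lesssim N^{-q/2},
\]
which is the claim.

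The main obstacle is not the orchestration, which is essentially identical to the wavelet case, but verifying that the Bernstein-type moment inequality with the condition~\eqref{eq:randomnoise} really applies to $\xi^j_n$ after taking the double expectation over $\bu$ and $\epsilon_N$; this relies on the fact that $\|(A\sh_\psi^\dag e_j)(u_n)\|_V$ is uniformly bounded in $u_n$ by $\|A\sh_\psi^\dag e_j\|_\infty$, so that the conditional moment bounds do not depend on $\bu$ and the outer expectation is harmless. Once this is cleanly checked, the term-by-term bound propagates through (S2) exactly as in~\cite[proposition 5.5]{Bubba21}.
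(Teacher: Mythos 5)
Your proposal is correct and follows essentially the same route as the paper: the same reduction of $R^\star(\Abu^*\epsilon_N)$ to the weighted $\ell^q_{1/m}$-norm of $(\sh_\psi^\dag)^*\Abu^*\epsilon_N$, the same component-wise decomposition into the i.i.d.\ zero-mean sums $\frac{1}{N}\sum_n \langle \epsilon_N^n, (A\sh_\psi^\dag e_j)(u_n)\rangle_V$, and the same use of (S2) to sum the resulting term-by-term bounds. The only cosmetic difference is that you invoke a Bernstein-type moment inequality where the paper bounds the sub-Gaussian norm of $\tilde\xi_n^j$ by $\|A\sh_\psi^\dag e_j\|_\infty \, \nsG{\epsilon_N^n}$ and applies Hoeffding's inequality for sub-Gaussian variables, integrating the tail to control the $q$-th moment; both yield the same $N^{-q/2}$ rate.
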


\begin{proof}
Proceeding as above, $R^\star(\Abu^*\epsilon_N) = \frac{1}{q}\norm{(\sh_\psi^\dag)^*\Abu^*\epsilon_N}^q_{\ell^q_{1/m}}$ and we can write each component of the sequence $(\sh_\psi^\dag)^*\Abu^*\epsilon_N$ as follows:
\begin{equation*}
	\< (\sh_\psi^\dag)^* \Abu^* \epsilon_N, e_j\>_{(\ell^p_m)^*\times \ell^p_m} = \< \Abu^* \epsilon_N, \sh_\psi^\dag e_j\>_{X^*\times X}  = 
	\frac{1}{N} \sum_{n=1}^N \< \epsilon_N^n, A_{u_n} \sh_\psi^\dag e_j \>_V =: 
	\frac{1}{N} \sum_{n=1}^N \tilde \xi_n^j.
\end{equation*}
The random variables $\tilde \xi_n^j$ are independent and zero-mean, since $\epsilon_N^n$ is zero-mean and independent of $u_n$. By assumption (A1) and \eqref{eq:randomnoise}, we have that $\tilde \xi_n^j$ are also sub-Gaussian random variables and
\begin{equation*}
	\nsG{\tilde \xi_n^j} := \inf \left\{t>0 \; \big| \; \E \left[ \exp\left(\frac{(\tilde \xi_n^j)^2}{t^2}\right) \right] \leq 2 \right\} \\
	\leq \norm{A \sh_\psi^\dag e_j}_{\infty} \nsG{\epsilon_N^n}.
\end{equation*}
Notice that the terms $\norm{A \sh_\psi^\dag e_j}_{\infty}$ are uniformly bounded thanks to assumption (S2).
By applying the Hoeffding's inequality for sub-Gaussian random variables, we get
\[
\begin{aligned}
	\E [R^\star(\Abu^* \epsilon_N)] &= \frac{1}{q}\sum_{j=1}^\infty N^{-q} m_j^{-q} \, \E \left[ \left|\sum_{n=1}^N \tilde \xi^j_n\right|^q \right] \\
	&\leq  \frac{2}{q} \sum_{j=1}^\infty N^{-q} m_j^{-q} \int_0^\infty t^{q-1} \exp\left(-\frac{C t^2}{N \norm{A \sh_\psi^\dag e_j}_{\infty}^2} \right) dt \\
	&\leq  \frac{2}{q} \sum_{j=1}^\infty C^{-\frac q2} N^{-\frac{q}{2}}m_j^{-q} \norm{A \sh_\psi^\dag  e_j}_{\infty}^q \int_0^\infty s^{q-1}\exp\left(-\frac{1}{2} s^2\right) ds \\ 
	&\lesssim N^{-\frac{q}{2}} \sum_{j=1}^\infty m_j^{-q} \norm{A \sh_\psi^\dag e_j}_{\infty}^q.
\end{aligned}
\]
\end{proof}

Combining propositions \ref{prop:shearlet_rate1} and \ref{prop:shearlet_rate2}, we can finally obtain a version of theorem~\ref{thm:general_rate} suited for the case of shearlet-based regularization. With an argument analogous to the one leading to theorem~\ref{thm:general_rate_nonneg} (see, in particular, \eqref{eq:DistBreg_nonneg}), we directly formulate the convergence rate for the case constrained to the non-negative orthant, that is, $\wtR(f) = R(f) + \iota_{+}(f)$.
\begin{theorem} \label{thm:general_rate_shearlet}
Suppose assumptions (A1)-(A2) are verified. Let $\wtR(f) = R(f) + \iota_{+}(f)$ and assume that $f^\dag \geq 0$ a.e. Let $X = \msSC_{p,m}$ and suppose that (S1)-(S2) hold true.
Then, we have the following convergence rates, as $N \rightarrow \infty$:
\begin{itemize}
    \item if $\delta N \rightarrow \infty$ (and $\delta^2/N \rightarrow 0$), then
    \begin{equation}
	\label{eq:param_choice_p_fixed_s}
	\E \left[D_{\wtR}(\fdan, f^\dag) \right] \lesssim  \left( \frac{\delta^2}{N}\right)^{\frac{1}{3}} \quad \text{for} \quad \alpha \simeq  \left( \frac{\delta^2}{N}\right)^{\frac{1}{3}};
    \end{equation}
    \item if $\delta N$ is bounded, then 
    \begin{equation}
	\label{eq:param_choice_p_satur_s}
	\E \left[D_{\wtR}(\fdan, f^\dag) \right] \lesssim  N^{-1} \quad \text{for} \quad \alpha \simeq N^{-1}.
    \end{equation}
\end{itemize}
\end{theorem}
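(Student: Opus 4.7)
The plan is to combine the three ingredients already in place: the shearlet coorbit machinery of subsection~\ref{ssec:shearletstheory}, the decay estimates provided by propositions~\ref{prop:shearlet_rate1} and \ref{prop:shearlet_rate2}, and the non-negativity treatment of section~\ref{sec:NonnegConstr}. First I would fix $X = \{f\in\msSC_{p,m} : \supp(f)\subset\Omega\}$ equipped with the $\ell^p_m$-norm of the shearlet coefficients, so that $R(f)=\tfrac{1}{p}\|f\|_X^p$ is convex, $p$-homogeneous, and differentiable. Thanks to \eqref{eq:Brilliant_norm}, this is exactly the functional against which the abstract results of subsection~\ref{ssec:PrelRes} were stated, and (R1)--(R3) follow automatically because the norm equivalence (ii) transports the standard lower semicontinuity, coercivity and finite convex conjugate on a ball from $\ell^p_m$ to $X$. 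In particular propositions~\ref{prop:apriori} and \ref{prop:aux_convex2}, lemma~\ref{lem:xuroach} and theorem~\ref{thm:bregman_dist_gen} apply verbatim in this setting.

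Next I would deal with the non-negativity constraint exactly as in section~\ref{sec:NonnegConstr}. The Bregman distance identity~\eqref{eq:DistBreg_nonneg} shows that $D_{\wtR}(\fdan,f^\dag)=D_R(\fdan,f^\dag)$ whenever both arguments are non-negative a.e., which is guaranteed here because the minimizer of \eqref{eq:regularized_nonneg} lies in $\{f\geq 0\}$ and we assume $f^\dag\geq 0$. Repeating the arguments of propositions~\ref{prop:apriori_nonneg} and \ref{prop:aux_convex2_nonneg} (replacing the optimality condition by the variational inequality~\eqref{eq:optimality_criterion_nn} and restricting to the sublevel set of non-negative functions, on which $\wtR=R$) yields the same a priori bound and the same estimate on $D_R(\fdan,f^\dag)$ as in the unconstrained case. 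Consequently theorem~\ref{thm:bregman_dist_gen} holds with the shearlet functional and the non-negativity constraint with no change in the right-hand side.

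It then remains to verify the two probabilistic hypotheses \eqref{eq:ass_on_phomog_rate1} and \eqref{eq:ass_on_phomog_rate2} of theorem~\ref{thm:general_rate}. This is precisely the content of propositions~\ref{prop:shearlet_rate1} and \ref{prop:shearlet_rate2}: under (S1)--(S2) they give
\[
\E[\msR(\beta,\bu;f^\dag)] \lesssim N^{-q/2}+\beta, \qquad \E[R^\star(\Abu^*\epsilon_N)] \lesssim N^{-q/2},
\]
which are exactly the quantitative inputs needed to plug into~\eqref{eq:bregman_dist_gen2}. Taking the expectation of \eqref{eq:bregman_dist_gen2} and optimizing in $\gamma_1,\gamma_2,\alpha$ following the same balancing argument as in \cite[theorem~4.11]{Bubba21} with $Q=q/2$ produces the two asymptotic regimes $\delta N\to\infty$ and $\delta N$ bounded, with the parameter choice $\alpha\simeq(\delta^2/N)^{1/3}$ and $\alpha\simeq N^{-1}$ respectively, and the announced rates $(\delta^2/N)^{1/3}$ and $N^{-1}$ on $\E[D_{\wtR}(\fdan,f^\dag)]$.

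The only genuinely new point is the one that propositions~\ref{prop:shearlet_rate1} and \ref{prop:shearlet_rate2} already take care of: since $\{\psi_\lambda\}_\lambda$ is a Banach frame and not an orthonormal basis, one cannot simply expand $f=\sum_\lambda\langle f,\psi_\lambda\rangle\psi_\lambda$ as in the Besov/wavelet case of \cite{Bubba21}. The substitute is to work through the dual frame $\{\sh_\psi^\dag e_j\}_j$ and to exploit the characterization $R^\star(y)\leq \tfrac{1}{q}\|(\sh_\psi^\dag)^* y\|_{\ell^q_{1/m}}^q$ together with (S2), which is exactly the place where the non-tightness of the frame could have caused trouble but is absorbed once the bound is moved to the dual sequence space. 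After that detour, the remainder of the argument is a transcription of the proofs already given in sections~\ref{sec:SettingStage} and \ref{sec:NonnegConstr}.
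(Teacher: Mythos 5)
Your proposal is correct and follows essentially the same route as the paper: the paper's own proof of theorem~\ref{thm:general_rate_shearlet} consists precisely of observing that \eqref{eq:Brilliant_norm} makes the machinery of section~\ref{sec:SettingStage} applicable, invoking propositions~\ref{prop:shearlet_rate1} and~\ref{prop:shearlet_rate2} to verify \eqref{eq:ass_on_phomog_rate1}--\eqref{eq:ass_on_phomog_rate2}, and transferring the non-negativity constraint through \eqref{eq:DistBreg_nonneg} exactly as in theorem~\ref{thm:general_rate_nonneg}. Your write-up is in fact somewhat more explicit than the paper's, which states this combination without spelling out the intermediate steps.
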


To conclude this subsection, we show that, by choosing $A$ as the semidiscrete Radon transform $A = \radonSD$, the assumptions of theorem~\ref{thm:general_rate_shearlet} are verified. In particular, according to the discussion in subsection \ref{ssec:radon}, assumption (A1) is verified since $X = \msSC_{p,m} \subset L^1(\Omega)$. Assumption (S2) is verified in the following lemma. We recall that in this application we set $Z = \mathcal{C}([0,2\pi);\R^{\Ndtc})$, and therefore $\| \cdot \|_Z = \| \cdot \|_{\infty}$.

\begin{lemma}
\label{lem:Apsi}
The dual frame $\{\sh_\psi^\dag e_j\}_j$ and the  operator $\radonSD$ satisfy
    \begin{equation}
    \sum_{j=1}^{\infty} m_j^{-q} \| \radonSD \sh_\psi^\dag e_j \|_\infty^q < \infty.
        \label{eq:Aframe_inf}
    \end{equation}
\end{lemma}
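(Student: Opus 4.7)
The plan is to use the $L^1$-to-$\mathcal{C}(U;V)$ continuity of the semidiscrete Radon transform to reduce \eqref{eq:Aframe_inf} to a weighted $\ell^q$-summability statement for the $L^1(\Omega)$-norms of the dual shearlet frame elements $\sh_\psi^\dag e_j$, and then to close the latter using the localisation properties of the canonical dual of a shearlet frame on the coorbit space $\msSC_{p,m}$.

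For the reduction, recall from the bound derived in subsection~\ref{ssec:radon} that for every $f\in L^1(\Omega)$ one has $\|\radonSD f\|_\infty \leq \sqrt{\Ndtc}\,\|\rho\|_\infty\,\|f\|_{L^1(\Omega)}$, where $\|\cdot\|_\infty$ is the norm of $Z=\mathcal{C}(U;V)$. Applying this to $f=\sh_\psi^\dag e_j$ yields
\[
\sum_{j=1}^\infty m_j^{-q}\,\|\radonSD \sh_\psi^\dag e_j\|_\infty^q \;\lesssim\; \sum_{j=1}^\infty m_j^{-q}\,\|\sh_\psi^\dag e_j\|_{L^1(\Omega)}^q,
\]
so it is enough to show the finiteness of the right-hand side; this step mirrors the corresponding manipulation used in the wavelet case to verify hypothesis~(B2).

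For the second step, as pointed out just before the lemma, $\{\sh_\psi^\dag e_j\}_j$ is the canonical dual frame of $\{\psi_\lambda\}_{\lambda\in\Lambda}$ viewed in $X$. Although this dual is not itself a shearlet system, the atomic decomposition and molecular characterisations of shearlet coorbit spaces from \cite{Dahlke09,Dahlke11,Dahlke12} guarantee that each $\sh_\psi^\dag e_j$ is a parabolic molecule associated with the index $\lambda_j=(a_j,\vec{s}_j,\vec{t}_j)$. Such a molecule inherits a scale-adapted $L^1$-bound of the form
\[
\|\sh_\psi^\dag e_j\|_{L^1(\Omega)} \;\lesssim\; |\det A_{a_j}|^{1/2},
\]
which is non-trivial only for those $j$ whose effective parabolic window around $\vec{t}_j$ meets $\Omega$. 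Substituting this into the previous display and organising the resulting sum over $\Lambda$ by scale, shear and translation, the number of in-domain translations at each scale grows polynomially in $|a_j|^{-1}$ (because $\Omega$ is compact), the number of admissible shears at each scale grows polynomially, and the factor $m_j^{-q}|\det A_{a_j}|^{q/2}$ decays geometrically in the scale variable under the admissibility assumption on the moderate weight $m$. The sum is therefore convergent.

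The main technical obstacle is exactly the parabolic-molecule bound on $\|\sh_\psi^\dag e_j\|_{L^1(\Omega)}$: passing to the canonical dual destroys the literal compact support of the primal generator $\psi$, so its substitute---the molecule-type localisation---has to be extracted from the coorbit machinery of \cite{Dahlke09,Dahlke11,Dahlke12} rather than read off directly from the structure of $\psi$. Once this localisation is in hand, the closing step is a routine volume/weight comparison on the parabolic shearlet grid restricted by the compact domain $\Omega$, entirely analogous to the computation underlying the wavelet condition~(B2).
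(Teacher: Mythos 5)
Your reduction step is fine: the bound $\norm{\radonSD f}_\infty \lesssim \norm{f}_{L^1(\Omega)}$ from subsection~\ref{ssec:radon} is correct, so it would indeed suffice to show $\sum_j m_j^{-q}\norm{\sh_\psi^\dag e_j}_{L^1(\Omega)}^q<\infty$. The genuine gap is the step you yourself flag as ``the main technical obstacle'': the claim that each canonical dual element satisfies $\norm{\sh_\psi^\dag e_j}_{L^1(\Omega)}\lesssim |\det A_{a_j}|^{1/2}$. This is not available from \cite{Dahlke09,Dahlke11,Dahlke12}: coorbit theory delivers the \emph{boundedness of the reconstruction operator} $\sh_\psi^\dag\colon \ell^p_m\to\msSC_{p,m}$, not elementwise spatial localisation of the dual system, and for compactly supported shearlet frames the canonical dual is precisely the object whose decay is \emph{not} controlled (the dual is not a shearlet system, as the paper itself remarks). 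Asserting that the duals are parabolic molecules is an unproved — and in this generality unknown — statement, so the proof does not close. Moreover, even granting it, your final volume/weight count is not carried out: with the weight $m\equiv 1$ (the choice the paper explicitly uses), the scale-$j$ block of your sum behaves like $2^{\,j(2-3q/4)}$ in $d=2$, which diverges for $q\le 8/3$, i.e.\ for all $p$ sufficiently close to $2$; so the route cannot give the lemma in the stated generality.

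The paper avoids elementwise information about the dual frame entirely. It first upgrades the sup norm via the Sobolev embedding $H^1((0,2\pi);\R^{\Ndtc})\hookrightarrow \mathcal{C}([0,2\pi);\R^{\Ndtc})$, writing $\norm{\radonSD f}_\infty^q$ in terms of integrals over $\theta$ of pairings $\langle \rho_i(\cdot,\theta), f\rangle$ and $\langle \nabla_\theta\rho_i(\cdot,\theta), f\rangle$. After Jensen and Fubini, the sum over $j$ is performed \emph{for each fixed} $\theta$ against a single functional $y=\rho_i(\cdot,\theta)\in X^*$, and then
\[
\sum_{j} m_j^{-q}\,\bigl|\langle y,\sh_\psi^\dag e_j\rangle_{X^*\times X}\bigr|^q
=\norm{(\sh_\psi^\dag)^* y}_{(\ell^p_m)^*}^q
\le \norm{\sh_\psi^\dag}_{\ell^p_m\to X}^q\,\norm{y}_{X^*}^q,
\]
which uses only the Banach-frame property (iii) plus the duality $(\msSC_{p,m})^*$ from \cite[theorem 4.9]{Feichtinger89} to check $\rho_i(\cdot,\theta),\nabla_\theta\rho_i(\cdot,\theta)\in X^*$. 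Note that this trick is unavailable after your reduction: $\norm{\sh_\psi^\dag e_j}_{L^1(\Omega)}$ is a supremum over a \emph{different} test function for each $j$, which is exactly what the Sobolev-embedding step is designed to prevent.
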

\begin{proof}
For any $f \in X$, we have $\radonSD f \in Z = \mathcal{C}(U,V) = \mathcal{C}([0,2\pi);\R^{\Ndtc})$. Therefore, we use~\eqref{eq:Radon_model_eqL} and the Sobolev embedding of $H^1([0,2\pi);\R^{\Ndtc})$ into $\mathcal{C}([0,2\pi);\R^{\Ndtc})$ to conclude that
\begin{equation}
  \| \radonSD f\|_\infty \leq C_{S} \| \radonSD f \|_{H^1((0,2\pi);\R^{\Ndtc})} = C_{S} \left( \sum_{i=1}^{\Ndtc} \left\| \int_{\Omega} f(x) \rho_i(x,\cdot)dx \right\|_{H^1(0,2\pi)}^2 \right)^{1/2}.
\label{eq:aux_ineq}  
\end{equation}
Denoting by $h_i(\theta) = \int_{\Omega} f(x) \rho_i(x,\theta)dx$ and by $\langle \cdot, \cdot \rangle$ the scalar product in $L^2(\Omega)$ we have that
\begin{equation}
\| h_i \|_{H^1(0,2\pi)}^2 =\| h_i \|_{L^2(0,2\pi)}^2 +\| \nabla_\theta h_i \|_{L^2(0,2\pi)}^2 = \int_0^{2\pi} \langle f, \rho_i(\cdot,\theta) \rangle^2 d\theta + \int_0^{2\pi} \langle f, \nabla_\theta \rho_i(\cdot,\theta) \rangle^2 d\theta.
\label{eq:aux_eq}    
\end{equation}
Now, apply \eqref{eq:aux_ineq} and \eqref{eq:aux_eq} in the following sum, each time substituting $f$ with $\sh_\psi^\dag e_j$: as a result,
\begin{align*}
\sum_{j=1}^\infty m_j^{-q}\| \radonSD \sh_\psi^\dag e_j \|_\infty^q &\lesssim \sum_{j=1}^\infty m_j^{-q} \left( \sum_{i=1}^{\Ndtc} \left( \| h_i \|_{L^2(0,2\pi)}^2 +\| \nabla_\theta h_i \|_{L^2(0,2\pi)}^2 \right) \right)^{q/2} \\
&\lesssim \sum_{j=1}^\infty \sum_{i=1}^{\Ndtc} m_j^{-q} \left(
\| h_i \|_{L^2(0,2\pi)}^q +\| \nabla_\theta h_i \|_{L^2(0,2\pi)}^q \right) \\
&= \sum_{j=1}^\infty \sum_{i=1}^{\Ndtc} m_j^{-q} \left( \left( \int_0^{2\pi} \langle \rho_i(\cdot,\theta), \sh_\psi^\dag e_j \rangle^2 d\theta \right)^{q/2} + \left( \int_0^{2\pi} \langle \nabla_\theta \rho_i(\cdot,\theta), \sh_\psi^\dag e_j \rangle^2 d\theta \right)^{q/2} \right) \\
&\lesssim \sum_{j=1}^\infty \sum_{i=1}^{\Ndtc} \left( m_j^{-q} \int_0^{2\pi} |\langle \rho_i(\cdot,\theta), \sh_\psi^\dag e_j \rangle|^q d\theta + m_j^{-q} \int_0^{2\pi} |\langle \nabla_\theta \rho_i(\cdot,\theta), \sh_\psi^\dag e_j \rangle|^q d\theta \right) \\
&= \sum_{i=1}^{\Ndtc} \int_0^{2\pi} \left( \sum_{j=1}^\infty m_j^{-q} |\langle \rho_i(\cdot,\theta), \sh_\psi^\dag e_j \rangle|^q + \sum_{j=1}^\infty m_j^{-q} |\langle \nabla_\theta \rho_i (\cdot,\theta), \sh_\psi^\dag e_j \rangle|^q \right)d\theta 
\end{align*}
where we used Jensen's inequality both for finite sums and for the definite integral, since by assumption $q/2> 1$. Both sums inside the integral can be treated as follows: for a generic element $y \in X^*$,
\[
\begin{aligned}
    \sum_{j=1}^\infty m_j^{-q} |\< y, \sh_\psi^\dag e_j \>_{X^* \times X}|^q &= \sum_{j=1}^\infty m_j^{-q} |\< (\sh_\psi^\dag)^* y, e_j \>_{(\ell^p_m)^* \times \ell^p_m}|^q = \norm{(\sh_\psi^\dag)^*y}_{(\ell^p_m)^*}^q \\
    & \leq \norm{(\sh_\psi^\dag)^*}_{X^* \rightarrow (\ell^p_m)^*}^q \norm{y}_{X^*}^q = \norm{\sh_\psi^\dag}_{\ell^p_m \rightarrow X}^q \norm{y}_{X^*}^q.
\end{aligned}
\]
Finally, it is possible to consider both $\rho_i(\cdot,\theta)$ and $\nabla_\theta \rho_i(\cdot,\theta) \in X$ as elements of $X^*$. Indeed, by the characterization of the dual of coorbit spaces provided in \cite[theorem 4.9]{Feichtinger89}, we have
\[
\msSC_{p,m} = \{ 
    f \in \msH_{1,\omega}^{\sim} : \msSH_{\psi}(f) \in L^p_m(\bbS)\} \quad \Rightarrow \quad
(\msSC_{p,m})^* = \{ 
    f \in \msH_{1,\omega}^{\sim} : \msSH_{\psi}(f) \in L^q_{1/m}(\bbS)\},
\]
and by the smoothness of $\rho$ we deduce that $\| \rho_i(\cdot,\theta) \|_{X^*},\| \nabla_\theta \rho_i(\cdot,\theta) \|_{X^*} \lesssim \| \rho \|_{C^1}$. As a result,
\begin{align*}
\sum_{j=1}^\infty m_j^{-q} \| \radonSD \sh_\psi^\dag e_j \|_\infty^q
& \lesssim \sum_{i=1}^{\Ndtc} \int_0^{2\pi} \left( \| \rho_i(\cdot, \theta) \|_{X^*}^q + \| \nabla_\theta\rho_i (\cdot, \theta) \|_{X^*}^q\right) d\theta\\ 
& \leq C(\Omega, \| \rho\|_{C^1},\{|I_i|\},N_{dtc}) < \infty.
\end{align*}
\end{proof}

\subsection{Experiments and results}
\label{ssec:ShearletsNumerics}
Similarly to subsection~\ref{ssec:NonnegConstr_numerics}, we now verify numerically the expected convergence rates proven in theorem \ref{thm:general_rate_shearlet} for the two noise scenarios of interest: decreasing and fixed noise. Notice that in this case we have $X = \{f \in \msSC_{p,m}: \supp(f) \in \Omega \}$, which we can again discretize with $\R^{\Npxl}$. The shearlet operator $\shD \in \R^{\Nsh \times \Npxl}$, with $\Nsh = \sigma \Npxl  > \Npxl$, is implemented with ShearLab~\cite{Kutyniok16}, using a compactly supported generator function, with three scales, $\sigma = 33$ subbands and weights $m \equiv 1$. This choice of the shearlet parameters is compliant with the theoretical framework used to prove the estimates in theorem \ref{thm:general_rate_shearlet}.
All the other settings (namely, the parameters $\Npxl$, $N_{\min}$, $N_{\max}$, $\delta$ and $c_{\delta}$ and the operators $\RadonD$, $\RadonD_{\thetab}$ with their adjoints) are as in subsection~\ref{ssec:NonnegConstr_numerics}. The regularization parameter $\alpha$ is heuristically determined in each experiment, by means of $c_{\alpha}$. As in the wavelet case, the sample averages are computed using 30 random realizations. 
Reconstructions are computed using VMILA (see, in particular, equations~\eqref{eq:VMILAiter} and \eqref{eq:VMILADualObjConstrained} in appendix~\ref{ssec:VMILAnonneg}), where we set $\Mop = \shD \in \R^{\sigma \Npxl \times \Npxl}$.

Finally, similarly to the wavelet regularization case, the phantom should satisfy the source condition (S1), associated with shearlets. This is an even more challenging task, as the discussion below shows.

\subsubsection{Source conditions} 
\label{ssec:SC_shearlets}
In the case of shearlet regularization, generating a phantom satisfying the source condition (S1) is a complicated task. Indeed, for $p>1$, \eqref{eq:SC_Coorbit} can be formulated in the discrete setting as follows:
\begin{equation}
\exists  \; \w \in \R^{\Ndtc\Nth} \qquad \text{s.t.} \quad \shD^{\text{T}} (\shD \f^{\dag})^{[p-1]} = \RadonD^{\text{T}} \w. 
\label{eq:SourceCondDiscr_shearlets}
\end{equation}
As a result, starting from a phantom of interest $\f_0$, it is still possible to compute an element $\w$ which solves a regularized version of \eqref{eq:SourceCondDiscr_shearlets}, as in \eqref{eq:SourceCondTik}. Nevertheless, it is not possible to straightforwardly invert \eqref{eq:SourceCondDiscr_shearlets} to recover a phantom $\f^\dag$ which would be exactly associated with $\w$. Indeed, we can still split this problem into two linear systems $\shD^{\text{T}}\y = \RadonD^{\text{T}}\w$ and $\shD \f^\dag = \y^{\left[ \frac{1}{p-1}\right]}$, but since the adjoint matrix $\shD^{\text{T}}$ is not left-invertible, the first subproblem can only be solved if $\RadonD^{\text{T}}\w \in \imag(\shD^{\text{T}})$, which is not true in general. Therefore, computing $\f^\dag$ from $\w$ in \eqref{eq:SourceCondDiscr_shearlets} should be addressed as a nonlinear system of equations, whose well-posedness is unclear and whose solution could only be approximated numerically. As a side note, notice that it is actually possible to solve such problem in the case $p=2$, since the system reduces to $\shD^{\text{T}} \shD \f^\dag = \RadonD^{\text{T}}$ which has the unique solution $\f^\dag = (\shD^{\text{T}}\shD)^{-1}\RadonD^{\text{T}}\w = \shD^\dag (\shD^\dag)^{\text{T}} \RadonD^{\text{T}}\w$.

As a consequence, in the case of shearlet regularization, we can only rely on the approximate source condition: namely, for a prescribed phantom $\f^\dag = \f_0$ we should check if a discrete version of condition \eqref{eq:ass_on_phomog_rate1} is satisfied. By means of algorithm \ref{algo:approxSC} (with $K=30$), it is easy to assess that the plant phantom satisfies the approximate source condition for the values $p=3/2$ and $p=4/3$, which we used in the numerical experiments.

\subsubsection{Discussion}

\begin{figure}
    \centering
    \begin{tabular}{@{}c@{\;}c@{\;}c@{\;}c@{}}
        \multicolumn{2}{c}{fixed noise}
        & \multicolumn{2}{c}{decreasing noise} \\
        $p=3/2$ & $p=4/3$
        & $p=3/2$ & $p=4/3$ \\
        \includegraphics[width=0.25\textwidth]{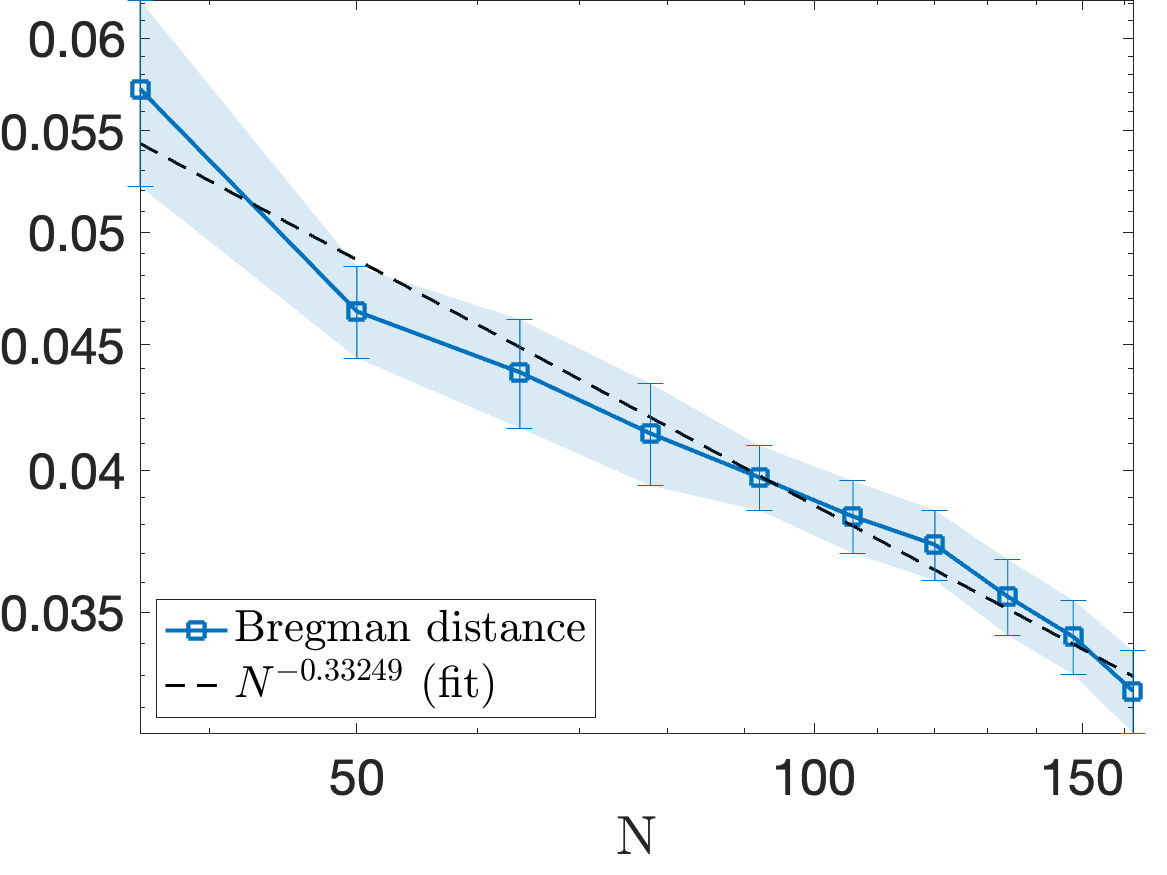} 
        & \includegraphics[width=0.25\textwidth]{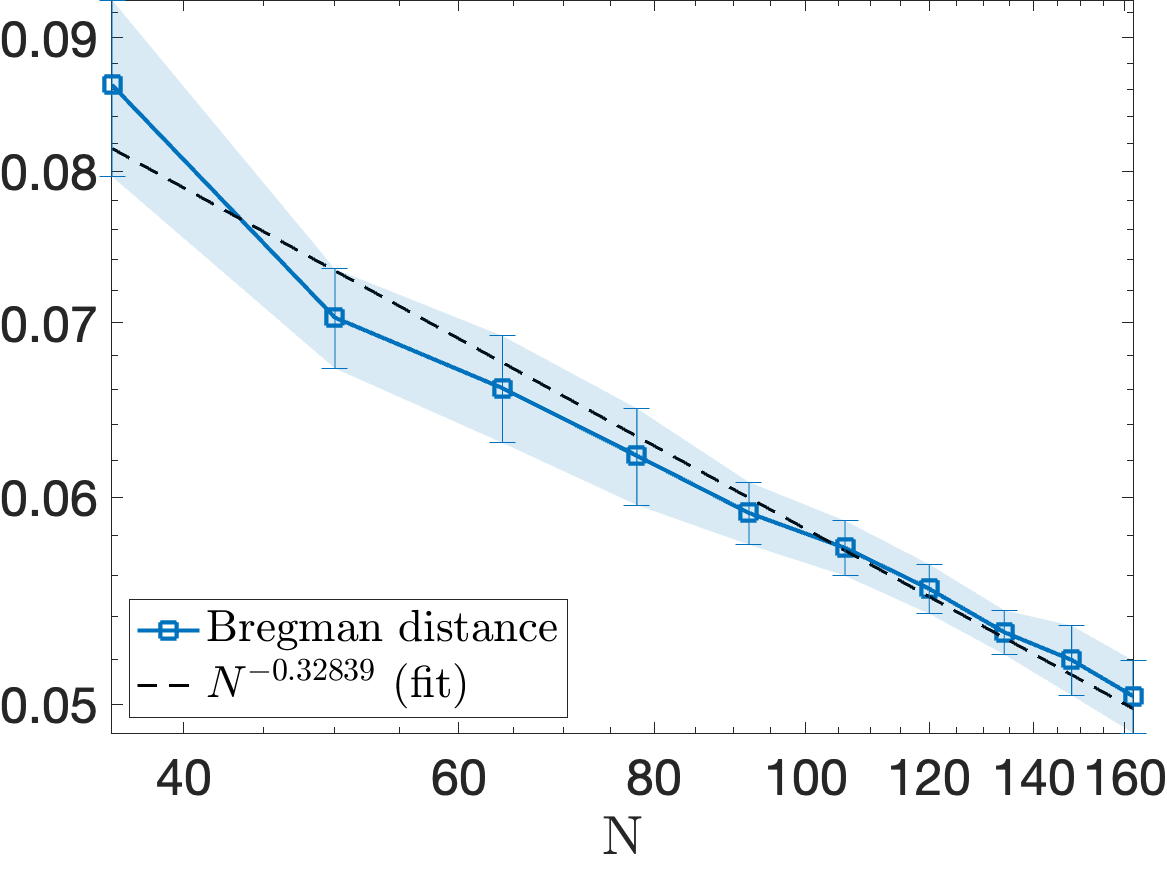} 
        & \includegraphics[width=0.25\textwidth]{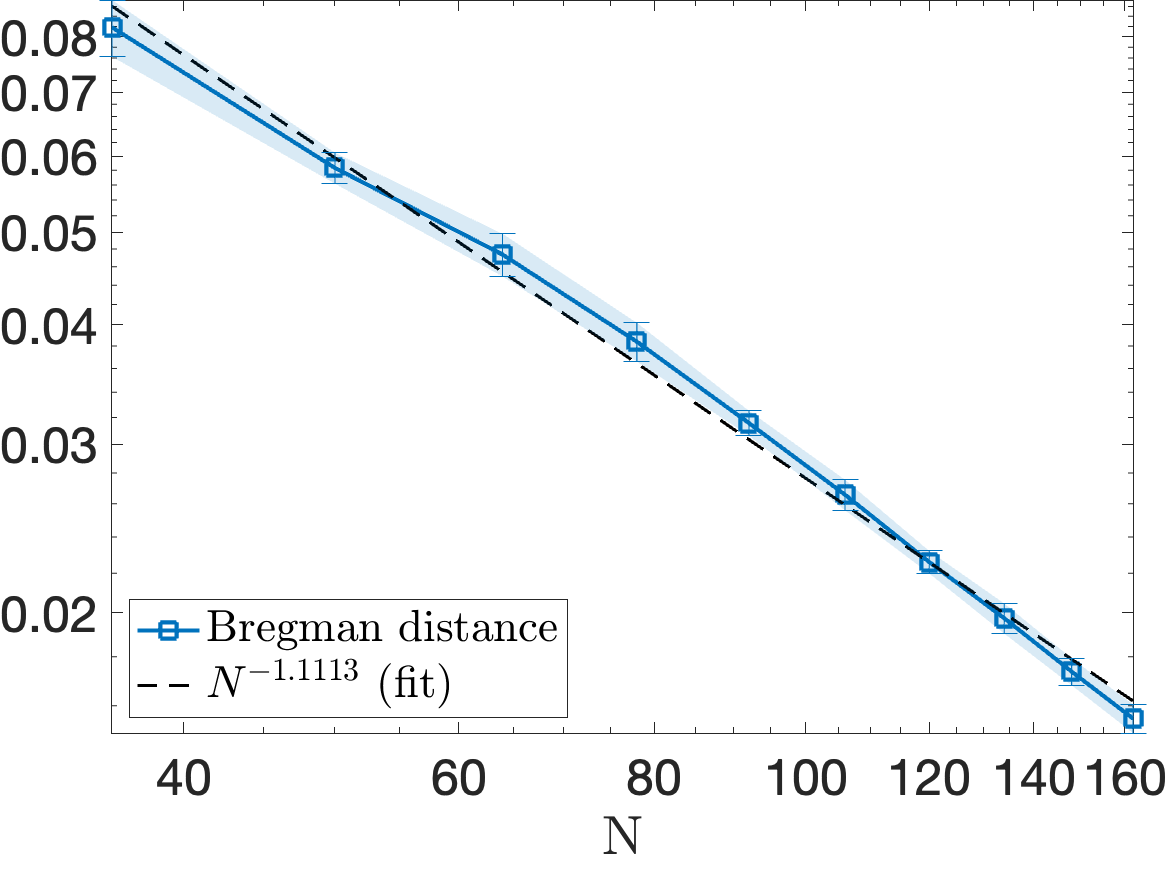}
        & \includegraphics[width=0.25\textwidth]{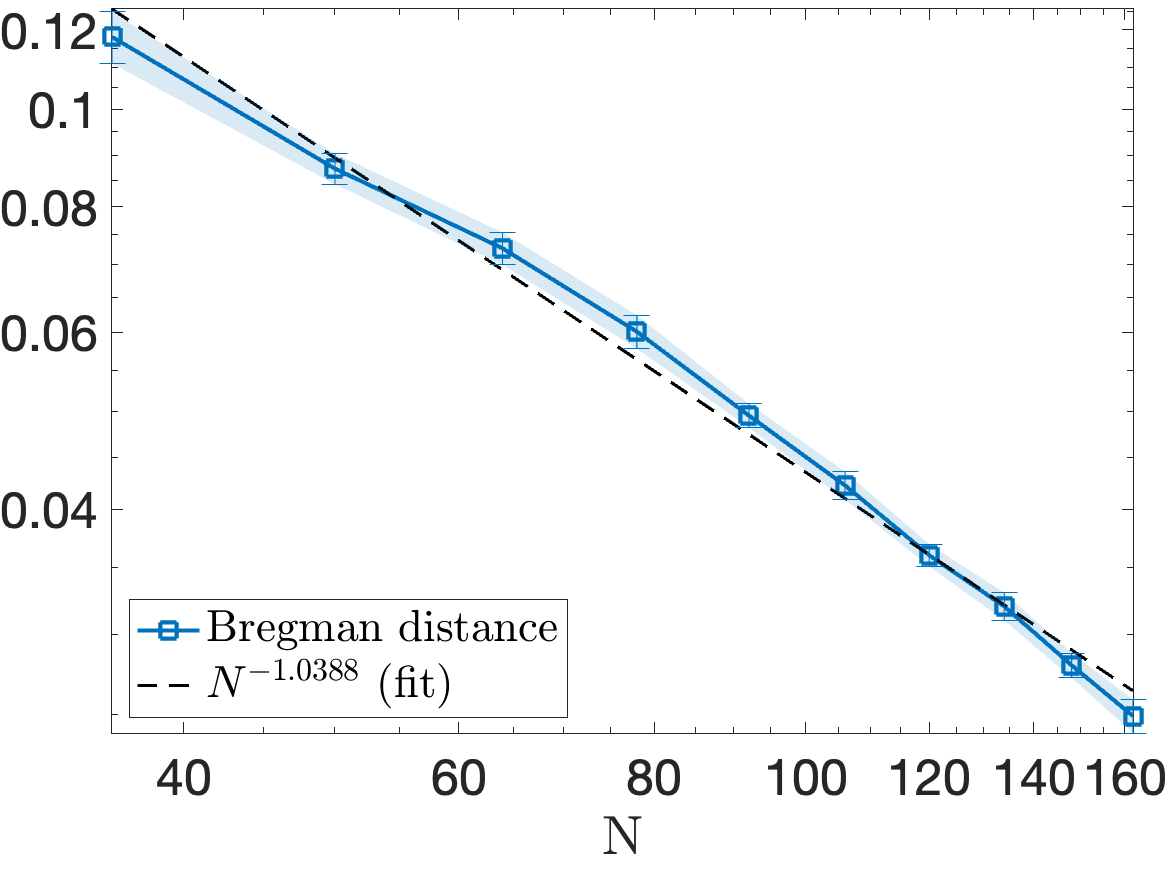} \\
        (a) & (b) & (c) & (d) 
    \end{tabular}
    \caption{Approximate decay of the expected value of the Bregman distance, with shearlets-based regularization, for $p=3/2$ ((a) and (c)) and $p=4/3$ ((b) and (d)). The phantom verifies the approximate source conditions. (a)\&(b): Fixed noise regime. (c)\&(d): Decreasing noise regime.}
    \label{fig:Plant_shearlets_appSC_NoWeights}
\end{figure}
\begin{table}[t]
\centering
\caption{Approximate decay of the expected value of the Bregman distance, with shearlets-based regularization, for $p=3/2$ and $p=4/3$.}
\begin{tabular}{c|c|cc}
scenario & theoretical & $p = 3/2$ & $p=4/3$   \\
\hline
decreasing noise & $-1$ & $-1.1113$ & $-1.0388$  \\
fixed noise & $-1/3$ & $-0.33249$ & $-0.32839$ 
\end{tabular}
\label{tab:decaysShearlets}
\end{table}

In figure \ref{fig:Plant_shearlets_appSC_NoWeights}  we report the value of the expected Bregman distance $\E[ D_{\vec{\wtR}}(\f^\delta_{\alpha,N}, \f^\dag)]$ (blue solid line) as a function of $N$, for the plant phantom $\f^\dag$ satisfying the  approximate source condition. Similarly to the wavelet case, the shaded region in figure \ref{fig:Plant_shearlets_appSC_NoWeights} encompasses the standard deviation error bars, while the black dashed line is the best monomial approximation $c N^{\beta}$ to the expected Bregman distance decay determined numerically.
We consider both decreasing and fixed noise regimes, with both $p=3/2$ and $p=4/3$. 

Analogously to the wavelet case, the theoretically predicted decays~\eqref{eq:param_choice_p_fixed_s} and \eqref{eq:param_choice_p_satur_s} are numerically verified, as the results in table~\ref{tab:decaysShearlets} and figure \ref{fig:Plant_shearlets_appSC_NoWeights} show. For the fixed noise regime, unlike in the wavelet case,  we notice less oscillations around the mean, especially for $p=4/3$.

\medskip

As an incidental remark, notice that the numerical tests in subsections~\ref{ssec:NonnegConstr_numerics} and \ref{ssec:ShearletsNumerics} use $p=3/2$ and $p=4/3$ to allow a comparison with the numerical study in~\cite{Bubba21}. However, with VMILA is rather straightforward to consider \textit{any} other $p \in (1,2)$. This observation was the starting point for the experiments in section~\ref{sec:Approachingp1}.

\section{Approaching the case $p = 1$}
\label{sec:Approachingp1}

Over the last two decades, a common paradigm to solve tomographic inverse problems has been to consider  sparsity-enforcing penalties. In particular, there has been widespread interest in regularization by $\ell^1$-norm of wavelet or shearlet coefficients. 
This corresponds to our setup by setting $p=1$ and 
$X=B_1^s$ for wavelet-based regularization or $X=\msSC_{1,m}$ for shearlet-based regularization.

Notice, however, that in this case a series of complications arise that hinder the straightforward application of the theory developed so far. For example, the Bregman distance is no longer uniquely defined and, therefore, in some cases it might not be the ideal metric to derive concentration rates. Also, despite the fact that our main result is independent of $p$ (and its H\"{o}lder conjugate $q$), all the lemmata and propositions used to prove theorems~\ref{thm:general_rate_nonneg} and~\ref{thm:general_rate_shearlet} depend on $p$ and $q$.

As a consequence, we need \textit{ad hoc} strategies, and possibly a different perspective, to deal with this case. Therefore, in this section, we propose some strategies that numerically show that we can expect the same convergence rates as in the $1 < p < 2$ case and provide a partial theoretical analysis which nicely complement the numerical study.

\subsection{Numerical intuition}
\label{ssec:TMconv}
\begin{figure}
    \centering
    \begin{tabular}{@{}c@{\;}c@{\;}c@{\;}c@{}}
        \multicolumn{2}{c}{fixed noise}
        & \multicolumn{2}{c}{decreasing noise} \\
        $p=1.1$ & $p=1.01$
        & $p=1.1$ & $p=1.01$ \\
        \includegraphics[width=0.25\textwidth]{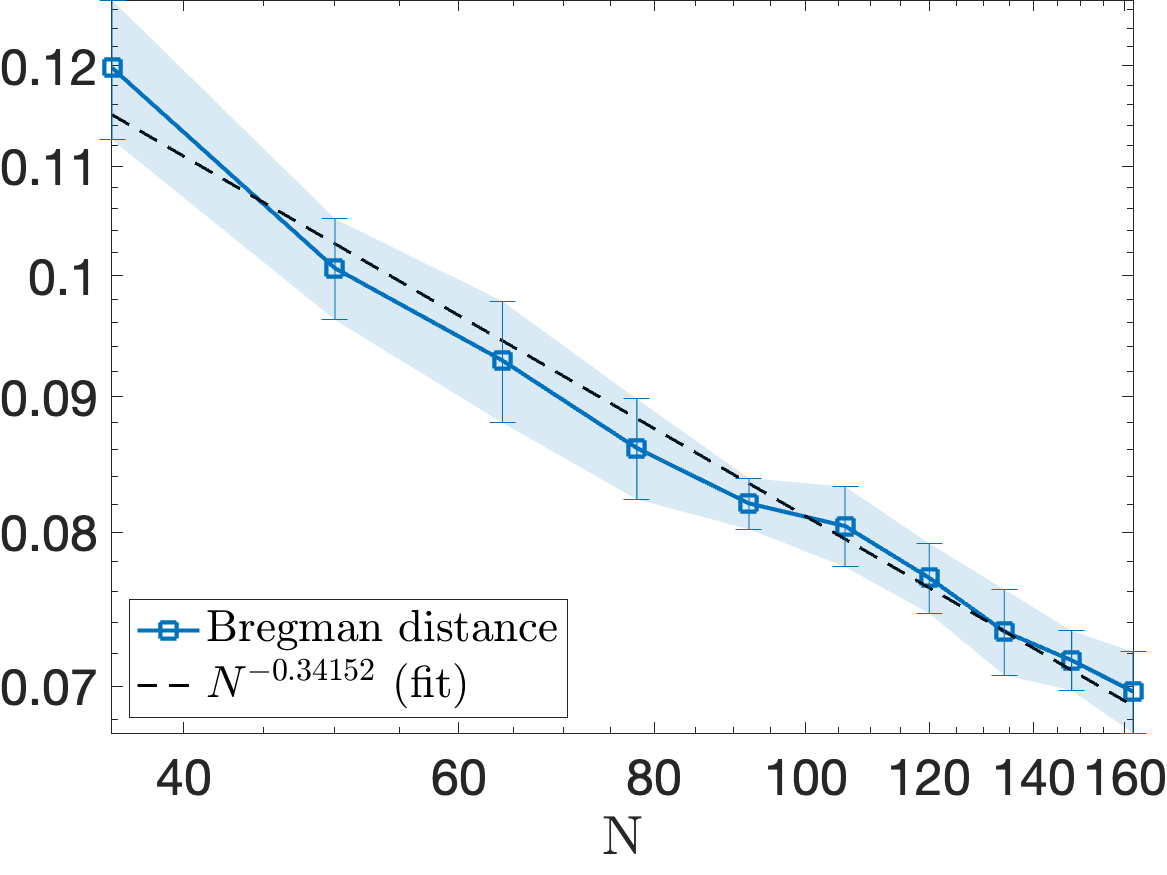} 
        & \includegraphics[width=0.25\textwidth]{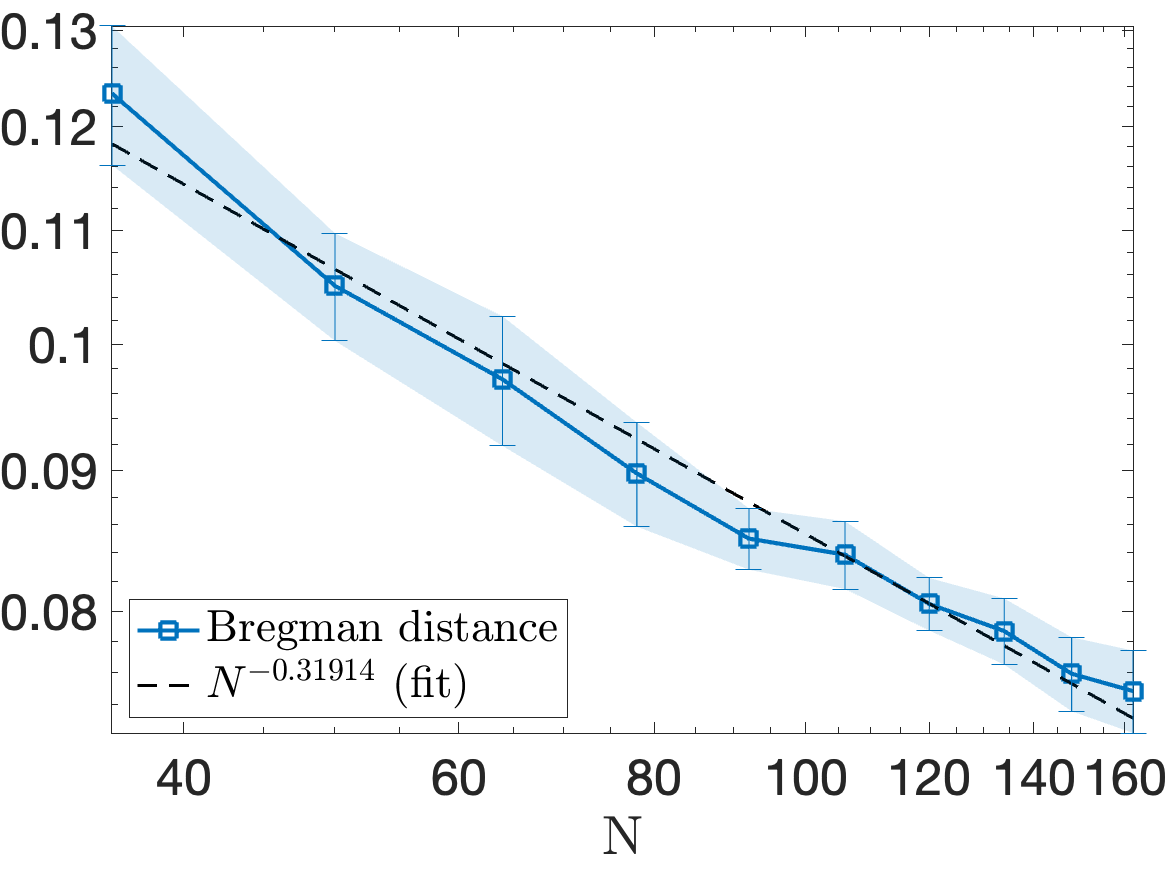} 
        & \includegraphics[width=0.25\textwidth]{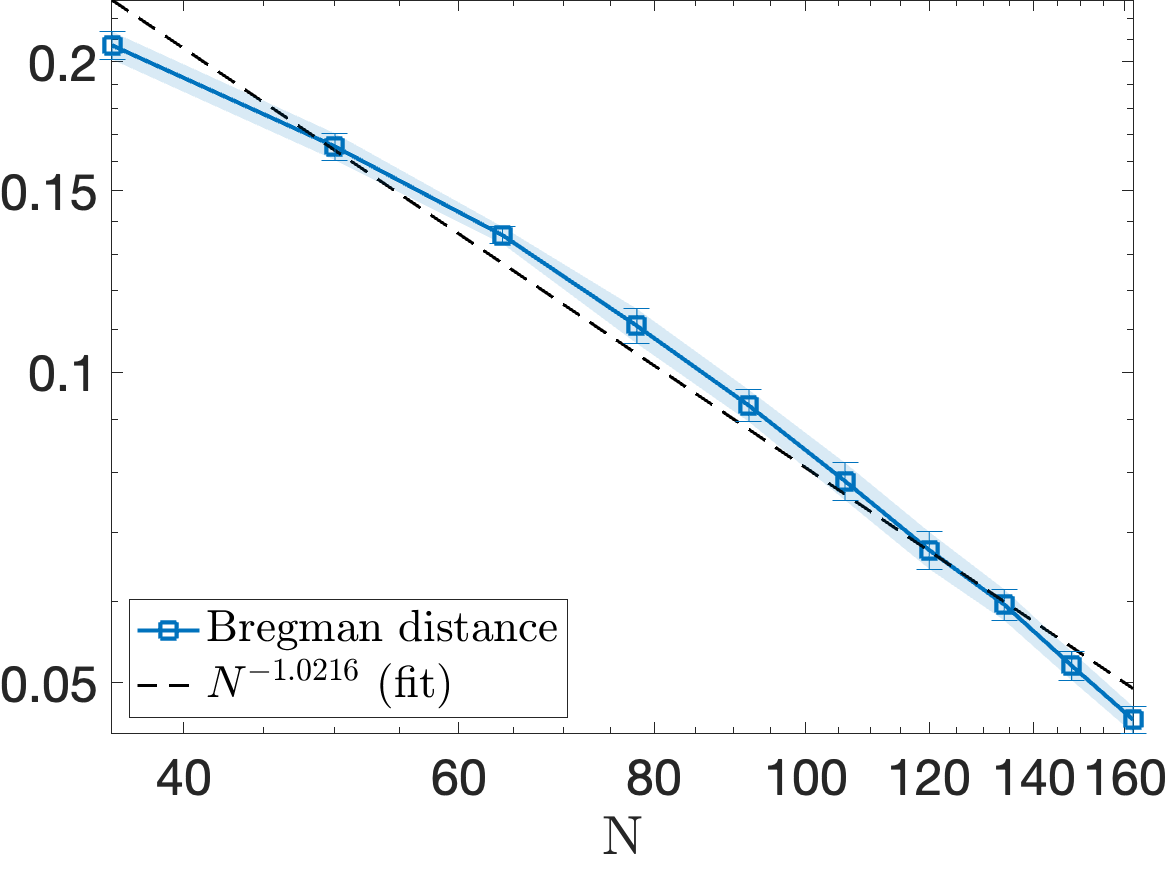}
        & \includegraphics[width=0.25\textwidth]{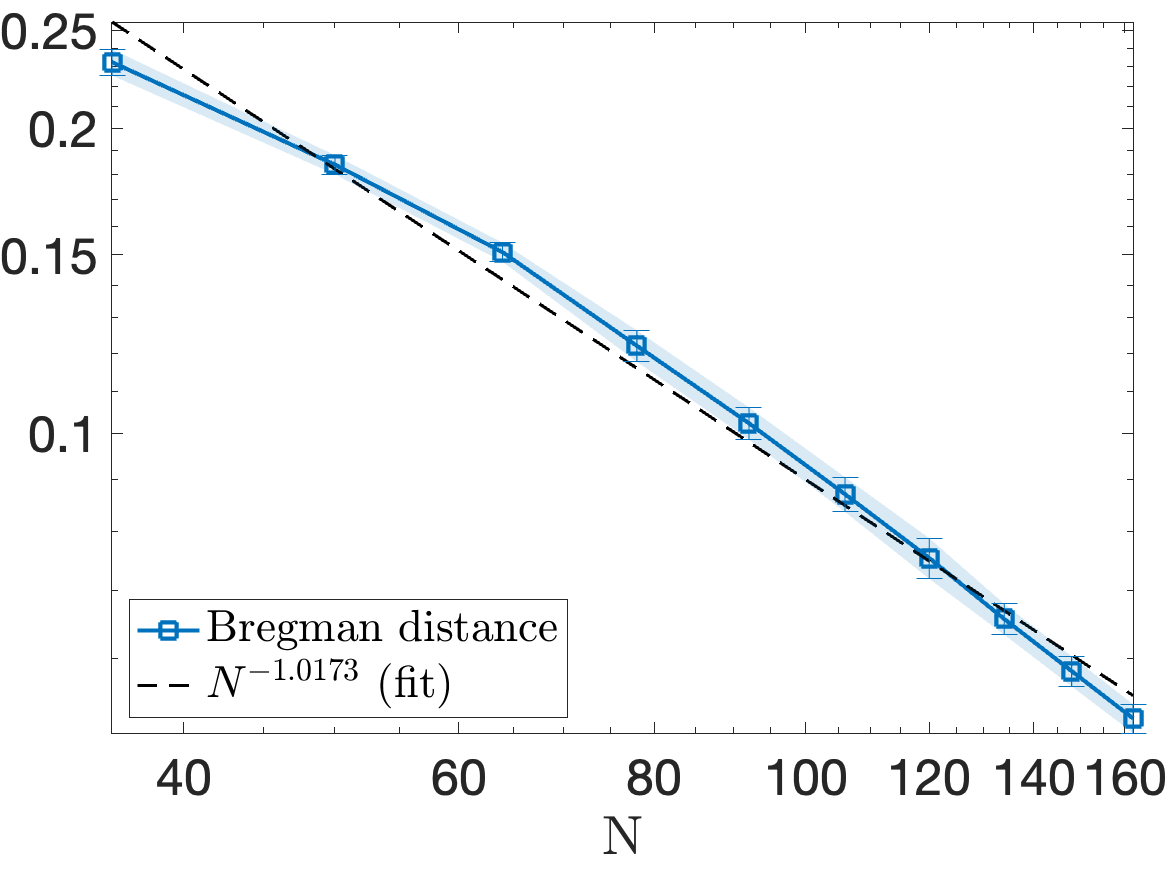} \\
        (a) & (b) & (c) & (d)
    \end{tabular}
    \caption{Approximate decay of the expected value of the Bregman distance, with shearlets-based regularization, for $p=1.1$ ((a) and (c)) and $p=1.01$ ((b) and (d)). (a)\&(b): Fixed noise regime. (c)\&(d): Decreasing noise regime.}
    \label{fig:Plant_shearlets_pTendsTo1_appSC_NoWeights}
\end{figure}
\begin{table}[t]
\centering
\caption{Approximate decay of the expected value of the Bregman distance, with shearlets-based regularization, for $p=1.1$ and $p=1.01$.}
\begin{tabular}{c|c|cc}
scenario & theoretical & $p = 1.1$ & $p=1.01$   \\
\hline
decreasing noise & $-1$ & $-1.0216$ & $-1.0173$  \\
fixed noise & $-1/3$ & $-0.34152$ & $-0.31914$ 
\end{tabular}
\label{tab:decaysShearlets_pTendsTo1}
\end{table}

To build intuition for the $p=1$ case, we start by gathering some evidence of the numerical behaviour of the expected value of the Bregman distance when we let $p$ get arbitrarely close to $1$. As we remarked already, even though the Bregman distance depends on $p$, the concentration estimates do not. Therefore we can expect to observe the same decay of the expected value of the Bregman distance, regardless of the value of $p \in (1,2)$. 

To this end we consider $p=1.1$ and $p=1.01$ as representative values of $p \rightarrow 1$ and repeat the tests carried out in subsection~\ref{ssec:ShearletsNumerics}. We maintain the same numerical set up as in subsection~\ref{ssec:ShearletsNumerics}, except for the values of $p$ (and $c_\alpha$, which is heuristically determined in each experiment). As the results displayed in table~\ref{tab:decaysShearlets_pTendsTo1} and figure~\ref{fig:Plant_shearlets_pTendsTo1_appSC_NoWeights} show, we can draw the same conclusions reached for the $p=3/2$ and $p=4/3$ cases: $\E [D_{\vec{\wtR}}(\f^\delta_{\alpha,N}, \f^\dag)]$ decays as $N^{-1/3}$ in the fixed noise scenario, and as $N^{-1}$ in the decreasing noise one. Furthermore, the behaviour of $D_{\vec{\wtR}}(\f^\delta_{\alpha,N}, \f^\dag)$ as random variable is perfectly in line with what observed so far. 
The only significant difference with respect to the tests run in subsection~\ref{ssec:ShearletsNumerics} concerns verifying the source condition. From a theoretical perspective, we can use the same strategy outlined in subsection~\ref{ssec:SC_shearlets}. 
However, the numerical assessment of the approximate source conditions, \eg{}, by mean of algorithm~\ref{algo:approxSC} is impractical. Indeed, if $p$ is close to $1$, its conjugate exponent $q$ becomes extremely large, and a decay of the orded $N^{-q}$ is difficult to observe due to finite-precision arithmetic and numerical errors.
In practice, already for $p=1.1$ (and $q=11$) it is impossible to verify the source condition. Nonetheless, the numerical evidence gathered with $p=3/2$ and $p=4/3$, with both wavelet-based and shearlet-based regularization, is that the observed decay is not hindered by the (verification of the) source condition. Therefore, we can rely on this to conclude that we can trust the results in table~\ref{tab:decaysShearlets_pTendsTo1} and figure~\ref{fig:Plant_shearlets_pTendsTo1_appSC_NoWeights} even though it has not been possible to verify the source condition.

\begin{remark} \label{rem:constants}
Notice that even though the main results (theorems~\ref{thm:general_rate_nonneg} and~\ref{thm:general_rate_shearlet}) do not depend on $p$, some of the constants
that have been omitted there or in the preliminary results depend on $p$, and their behaviour could in principle be critical as $p$ approaches $1$ (\ie{}, $q$ tends to $\infty$). In particular, the constants omitted from lemma~\ref{lem:xuroach} and from propositions~\ref{prop:shearlet_rate1} and~\ref{prop:shearlet_rate2} (and equivalent results in the wavelet case) show an undesired blow-up as $p\rightarrow 1$. As suggested by the stability of the numerical results, it should be possible to circumvent such theoretical shortcomings, for example introducing alternative assumptions with respect to (S1) or (B1).
\end{remark}

Given that the machinery developed in the previous sections upholds even when we let $p$ tend to $1$, one can take a leap of faith and repeat the tests once more, this time setting $p=1$ and maintaining the same numerical set up of subsection~\ref{ssec:ShearletsNumerics}. Also in this case we can use VMILA (see, in particular, equations~\eqref{eq:VMILAiter} and \eqref{eq:VMILADualProbl_l1constrained} in appendix~\ref{ssec:VMILAnonneg}). Now, the theory from section~\ref{sec:shearlets} does no longer apply: nonetheless, the results in table~\ref{tab:decaysShearlets_p1} and figure~\ref{fig:Plant_shearlets_p1_NoWeights} suggest that, also in this case, we can expect the same decay for the expected value of the Bregman distance. 
Notice that, compared to the $1<p<2$ case, producing the plots in figure~\ref{fig:Plant_shearlets_p1_NoWeights} requires, first and foremost, to choose a representative of the subdifferential $\partial R$ to define the Bregman distance.  Indeed, $R(f) = \norm{f}_X$, with $X$ either $B_1^s$ or $\msSC_{1,m}$, is not differentiable and therefore the subdifferential $\partial R$ is not single-valued. As a consequence, the Bregman distance is not uniquely defined and depending on the choice of an element in the subdifferential, the definition of the Bregman distance changes. 
Starting from \eqref{eq:BregDist}, when $p=1$ the definition of the subdifferential reads as:
\begin{equation*}
\sdiff_f \in \partial R(f) 
\qquad \Leftrightarrow \qquad \sdiff_f = M^* \zeta 
\qquad \text{where} \quad 
\zeta_{l} = \begin{cases}
1 &\text{if } \; [M f]_{l} >0 \\
-1 &\text{if } \; [M f]_{l} <0 \\
\end{cases}
\end{equation*}
and 
\begin{equation}
\zeta_l \in [-1,1] 
\qquad \text{if} \quad [M f]_{l} =0, 
\label{eq:BregDistp1}    
\end{equation}
where $M$ denotes either the wavelet or the shearlet transform. Any different choice of $\zeta_l$ in \eqref{eq:BregDistp1} leads to a different element $r_f \in \partial R(f)$, hence to a different Bregman distance.
In the case of table~\ref{tab:decaysShearlets_p1} and figure~\ref{fig:Plant_shearlets_p1_NoWeights}, we consider the same Bregman distance $D_R(f,\tilde{f})$ as in~\cite{Burger07}, where the element $r_f$ of the subgradient of $R$ in $f$ is selected also according to $\tilde{f}$, and in particular the value of $\zeta_l$ in \eqref{eq:BregDistp1} is chosen as
\begin{equation}
\zeta_l = \sign([M \tilde{f}]_{l}) 
\qquad \text{if} \quad 
[M f]_{l} =0. 
\label{eq:BregDistp1_Burger}      
\end{equation}

\begin{figure}
    \centering
    \begin{tabular}{@{}c@{\quad}c@{}}
        fixed noise 
        & decreasing noise  \\ 
        \includegraphics[width=0.25\textwidth]{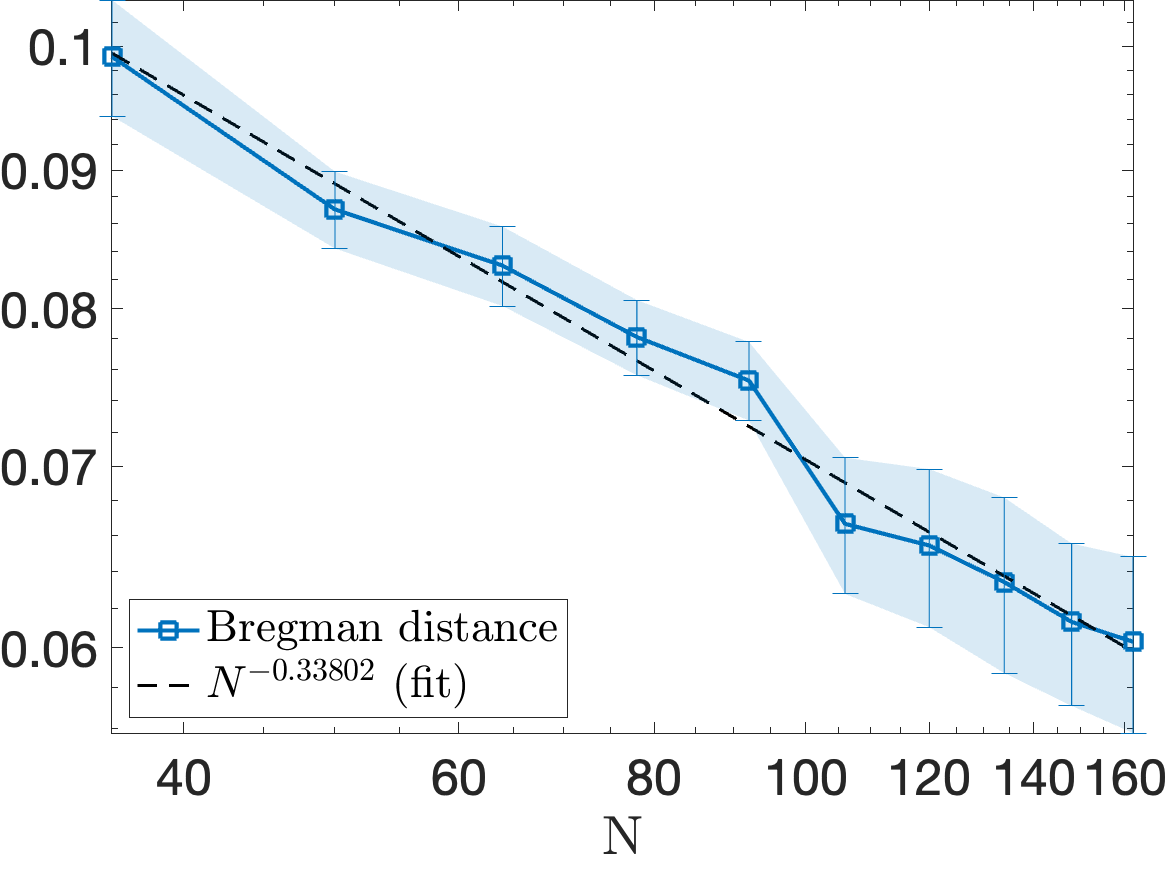} 
        & \includegraphics[width=0.25\textwidth]{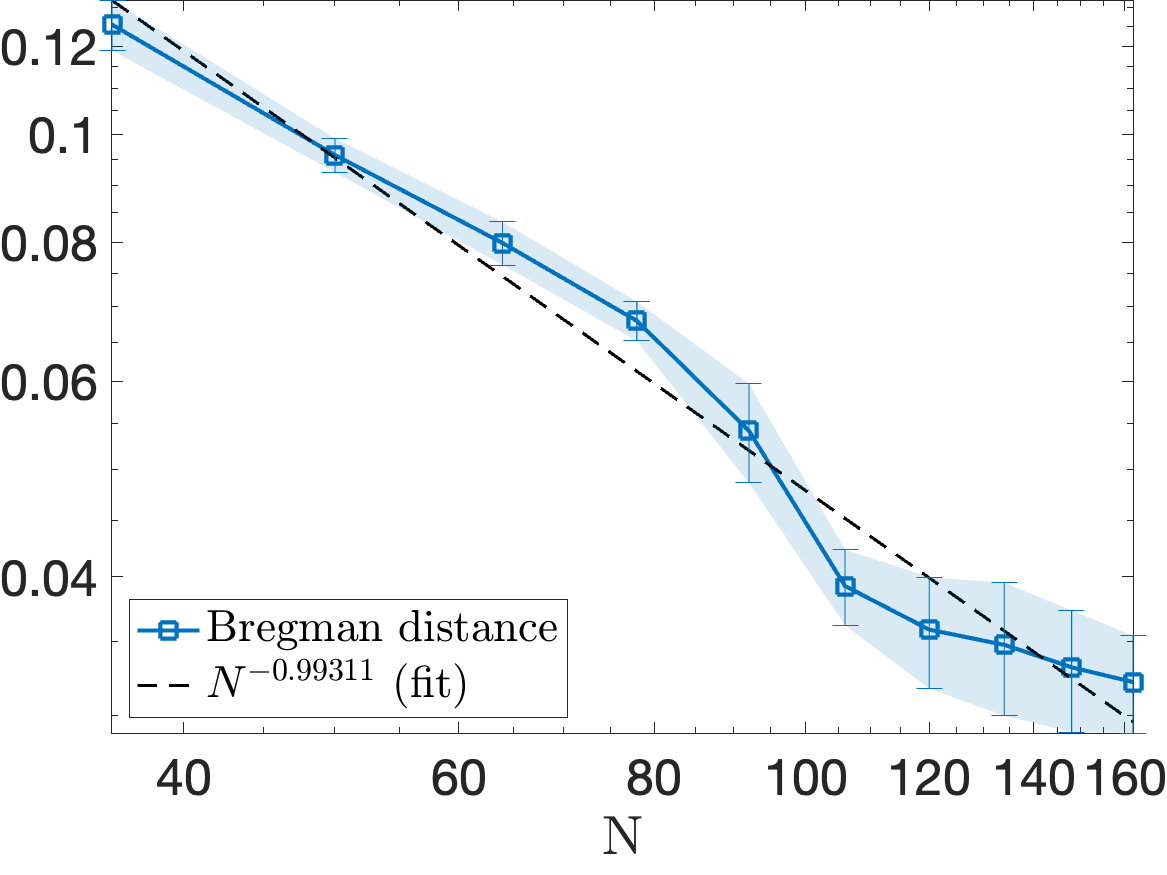} \\
        (a) & (b)
    \end{tabular}
    \caption{Approximate decay of the expected value of the Bregman distance, with $p=1$ and shearlets-based regularization. (a) Fixed noise regime. (b) Decreasing noise regime.}
    \label{fig:Plant_shearlets_p1_NoWeights}
\end{figure}
\begin{table}[t]
\centering
\caption{Approximate decay of the expected value of the Bregman distance, with $p=1$ and shearlets-based regularization.}
\begin{tabular}{c|c|c}
scenario & theoretical & $p = 1$   \\
\hline
decreasing noise & $-1$ & $-0.99311$   \\
fixed noise & $-1/3$ & $-0.33802$  
\end{tabular}
\label{tab:decaysShearlets_p1}
\end{table}

With respect to different alternatives, this choice of $r_f$ (and of $r_{\tilde{f}}$) allows $D_R(f,\tilde{f})$ to be a slightly more informative indicator of the difference between $f$ and $\tilde{f}$. We nevertheless point out that in the case $p=1$, whatever the choice of $r_f$, $D_R$ is actually not a metric: indeed, $D_R(f,\tilde{f})=0$ whenever the corresponding components of $Mf$ and $M\tilde{f}$ have the same signs, regardless of their magnitude. This implies that, depending on the application, $D_R$ can be a non-informative tool to quantify the difference between elements of $X$ (see~\cite[figure 1]{Molinari20}). In figure~\ref{fig:Plant_shearlets_p1_NoWeights}, though, it seems that our choice of $D_R(\fdan,f^\dag)$ is able to capture the desired convergence properties.

\subsection{Theoretical considerations}
\label{ssec:BoundsError}
The non-differentiability of $R$, and the resulting non-unique definition of the Bregman distance, is not the only issue we have to deal with when considering the case $p=1$. We now point out the main critical aspects preventing the extension of our results in theorems~\ref{thm:general_rate_nonneg} and~\ref{thm:general_rate_shearlet}, and discuss some connections with alternative approaches which are already present in the literature.

As already pointed out in remark~\ref{rem:constants}, one first critical issue is the uncontrolled behaviour of the constants omitted in our main results when $p \rightarrow 1$. In addition to this, the terms $\mathbb{E}[\mathscr{R}(\beta,\bu,f^\dag)]$ and $\E [R^\star (\Abu^* \epsilon_N)]$, which are extensively used in the proof of theorems~\ref{thm:general_rate_nonneg} and~\ref{thm:general_rate_shearlet}, must be carefully handled in this context. Indeed, the carachterization of the dual space $X^*$ is more problematic: despite $B_1^s$ and $\msSC_{1,m}$ being still separable Banach spaces, it is impossible to identify $X^*$ with $\ell^\infty$. Moreover, since the function $R$ is now $1$-homogeneous, its conjugate $R^\star$ corresponds to the indicator function of the unit ball in the dual space $X^*$, which may entail that $\E [R^\star (\Abu^* \epsilon_N)]$ diverges. Notice finally that, in the case $p=1$, the subdifferential is not invertible even in the case of the wavelet transform. For this reason, it is impossible to generate a phantom $\f^\dag$ satisfying (B1) or (S1) along the line of subsection \ref{subsec:SourceCond}. In line of principle, one could apply again~\eqref{eq:BregDistp1_Burger} and determine an optimal element $\w$ associated with a desired phantom $\f_0$, but it would be impossible to generate a new phantom $\f^\dag$ from such $\w$. On the other hand, we are not aware of any alternative or approximate source conditions, which could be in spirit equivalent to the one involving~\eqref{eq:Rbeautiful}. Therefore, we cannot follow a strategy resembling algorithm~\ref{algo:approxSC}.

The derivation of convergence rate for Tikhonov regularization in the presence of $\ell^1$-norm has been the object of extended studies in the last two decades, and we can relate and take advantage of a wide varieties of approaches and techniques in the literature. We here discuss the connections with a subset of the most relevant related papers: in all cases, the authors consider the Bregman distance together with other error metrics, such as the $\ell^2$ and $\ell^1$ errors, and of course the variational formulation does not include the sampling operator~\eqref{eq:SamplingOperator}.
Starting from the seminal work from Burger and Osher \cite{Burger04}, the Bregman distance has been considered the natural metric in which to derive convergence rates for convex regularization terms. In this context, under the assumption of a source condition, it is natural to derive convergence rates, as the noise level vanishes, of the Bregman distance associated with a specific choice of the subdifferential element, namely, the one involved in the source conditions. If we were to directly translate this ideas in our framework, nevertheless, due to the presence of the sampled operator, the choice of the source condition element (and of the Bregman distance itself) would vary at each sample. Notice that this was not a problem in our previous results, where we could formulate the source conditions (B1) or (S1) for the non-sampled operator.
Along this line, an interesting direction of investigation is represented by the fundamental work of Grasmair, Scherzer, and Haltmeier\cite{Grasmair11bis}, which showed that combining the source condition with some additional assumptions drawn from compressed sensing, such as the well-known restricted isometry property, allows to recover a result on the convergence rate of the error norm from the one on the Bregman distance. 
Among the results stemmed from this approach (see also \cite{Grasmair11}), the most relevant one to our framework is~\cite{Haltmeier12}, which considers redundant, non-tight frames, that is, the class to which shearlet systems belong to. In~\cite{Haltmeier12}, the author derives an error estimate on the $\ell^2$ error by using upper bounds on the discrepancy error and the Bregman distance. Deriving this result requires several assumptions (see \cite[Assumption III.6]{Haltmeier12}), some of which are rather technical and quite difficult to interpret in light of a practical application such as tomography. In particular, the author requires the injectivity of the forward operator restricted to the linear span of certain elements of the dual frame (whose definition depends also on the choice of the source condition element). This assumption, in addition to suffering from the same limitation of the choice of the source condition element, is also very difficult to interpret when we pick a specific forward operator, such as the the X-ray transform.
Therefore, even though the machinery in~\cite{Haltmeier12} could be very promising, it is arguably inapplicable to our randomized, sampled setting.
Another promising direction is to consider the argument in \cite{Lorenz08}, where a convergence rate with respect to the $\ell^1$-norm is deduced from the one in Bregman distance, introducing the finite basis injectivity property.
Finally, the most relevant and promising approach for our purposes is the one derived in
the paper from Burger, Helin and Kekkonen~\cite{Burger18}. This work, which has already been a source of inspiration to treat the case $1<p\leq 2$ in \cite{Bubba21}, considers also 
the $p=1$ case, and shows a possible strategy to deal with the presence of large noise. Adapting the techniques in~\cite{Burger18} to the case of random sampling seems therefore the most encouraging perspective in this direction. Clearly, this requires to re-build from scratch the theoretical framework and we leave this line of investigation to future work.

\subsection{$\Gamma$-converging to $p=1$}
\label{ssec:Gconv}

To conclude the discussion about $\ell^1$ regularization, we here propose a slightly different strategy.
Starting from the numerical evidence described in subsection~\ref{ssec:TMconv}, we want to provide a theoretical justification of why the solution of the $\ell^p$ regularized problem stably depends on $p$, even in the case $p \rightarrow 1$. 
In sections \ref{sec:NonnegConstr} and \ref{sec:shearlets} we characterized the regularized solutions in the presence of a regularization term $R$ equal to the $\ell^p$-norm of the wavelet or shearlet transform coefficients as the minimizer of \eqref{eq:regularized_nonneg}. To do so, the choice of the Banach space $X$ on which the minimization is carried out is crucial, and corresponds to Besov spaces (\ie{}, $X = B_p^s(\Omega)$) for wavelet regularization and to coorbit spaces (\ie{}, $X = B_p^s(\Omega)$) for shearlet regularization. Since this characterization is valid also for $p=1$, we can study the behaviour of the regularized solutions as $p\rightarrow 1$ taking advantage of several results regarding the convergence of minimizers of functionals. To this end, a preliminary difficulty is represented by the fact that each functional is minimized over a different space $X$ (which in fact depends on $p$). This can be fruitfully handled by means of standard technique based on $\Gamma$-convergence~\cite{Braides02,Dalmaso12}.

In order to avoid ambiguity in the notation, we start by  explicitly remarking the dependence on $p$ of the space we consider. Hence, we let $X_p$ be either $B_p^s(\Omega)$ or $\msSC_{p,m}$ and denote by $M$ the transform operator (either shearlet or wavelet). For simplicity, in this section we consider uniform weights, which amounts to say $s = d \left(\frac{1}{p}-\frac{1}{2}\right)$ or $m \equiv 1$.
Notice that, via $M$, we can identify each space $X_p$ with the sequence space $\ell^p$, and use the norm $\norm{f}_{X_p} = \norm{M f}_{\ell^p}$ on $X_p$. In particular, for $1\leq p \leq 2$, it holds that $X_1 \subset X_p \subset X_2$, and $\norm{f}_{X_{p'}} \leq \norm{f}_{X_{p}}$ for $p'\geq p$ holds for any $f \in X_p$.

For each different $p$, we introduce a different regularization term denoted as follows:
\begin{equation}
\label{eq:Regu_depend_p}
R_p(f)  = \frac{1}{p} \norm{M f}_{\ell_p}^p 
    = \whR_p(M f),
\end{equation}
where $\whR_p = \frac{1}{p} \norm{\cdot}_{\ell_p}^p$.
Finally, we define the functional
\begin{equation}
\label{eq:Funct_depend_p}
\Jdanp(f)  =  \frac{1}{2} 
    \norm{\Abu f - \gdan}^2_{V_N} + \alpha R_p(f) 
    + \iota_{+}(f),
\end{equation}
whose minimizer in $X_p$ is denoted by $\fdanp$. Since the only term depending on $p$ in $\Jdanp$ is the regularization term $R_p$, we start by proving its $\Gamma$-convergence to $R_1$, or, equivalently, the $\Gamma$-convergence of $\whR_p$ to $\whR_1$.
Notice carefully that each $\whR_p$ is defined - and minimized - on different spaces $\ell^p$. Nevertheless, since we are interested in $1 \leq p < 2$, all such spaces are embedded in $\ell^2$: therefore, we can extend all $\whR_p$ to functionals defined on the largest space $\ell^2$ as follows:
\begin{equation}
    \label{eq:extended_whR}
\bR_p(c) = \begin{cases} 
    \frac{1}{p} \norm{c}_{\ell^p}^p &\quad  c \in \ell^p \\[0.25em]
    \infty &\quad c \in \ell^2 \setminus \ell^p.
\end{cases}
\end{equation}
We are interested in proving that the functionals $\bR_p(c)$ converge to $\bR_1(c)$ in the sense of the $\Gamma$-convergence with respect to the $\ell^2$ topology. As it will be shown later, this entails that the regularized functionals $\Jdanp$ $\Gamma$-converge to $\Jdano$ (suitably extended to functionals in $X_2$) with respect to the $X_2$ topology, and ultimately the convergence of their minimizers.
To do so, we first need to prove the following ancillary result.
\begin{lemma}
\label{lem:GammaConvLem2}
Let $p_n \rightarrow 1$. Then, for all $c \in \ell^1$ we have  $\bR_{p_n}(c) \rightarrow \bR_1(c)$, namely,
\[
\frac{1}{p_n} \norm{c}_{\ell^{p_n}}^{p_n} \rightarrow \norm{c}_{\ell^1}.
\]
\end{lemma}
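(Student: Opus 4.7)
The plan is to write $\frac{1}{p_n}\|c\|_{\ell^{p_n}}^{p_n} = \frac{1}{p_n}\sum_{i} |c_i|^{p_n}$ and apply the dominated convergence theorem for the counting measure on $\N$, together with the elementary fact $\frac{1}{p_n}\to 1$.

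First, fix $c \in \ell^1$. For each fixed index $i$, the continuity of the exponential gives the pointwise convergence $|c_i|^{p_n} \to |c_i|$ as $p_n \to 1$. Next, to produce a summable dominating sequence, I would use the basic estimate $|x|^p \leq \max\{|x|,|x|^2\} \leq |x| + |x|^2$ valid for all $x \in \R$ and all $p \in [1,2]$. Without loss of generality, $p_n \in [1,2]$ eventually, so $|c_i|^{p_n} \leq |c_i| + |c_i|^2$ for all $i$ and all sufficiently large $n$.

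The hypothesis $c \in \ell^1$ yields $\sum_i |c_i| < \infty$, and since $\ell^1 \subset \ell^2$ (from $c_i \to 0$, eventually $|c_i|\leq 1$, hence $|c_i|^2 \leq |c_i|$), also $\sum_i |c_i|^2 < \infty$. Therefore the dominating sequence $i \mapsto |c_i| + |c_i|^2$ is summable. Dominated convergence on $(\N,\text{counting measure})$ then gives
\begin{equation*}
\sum_i |c_i|^{p_n} \;\longrightarrow\; \sum_i |c_i| = \|c\|_{\ell^1}.
\end{equation*}

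Finally, since $\frac{1}{p_n} \to 1$, multiplying yields $\bR_{p_n}(c) = \frac{1}{p_n}\sum_i |c_i|^{p_n} \to \|c\|_{\ell^1} = \bR_1(c)$, which is the claim. No step presents a real obstacle; the only subtlety is picking the uniform bound that is both $p_n$-independent and $\ell^1$-summable, which the inequality $|x|^p \leq |x| + |x|^2$ on $[1,2]$ supplies cleanly.
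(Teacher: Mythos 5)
Your proof is correct and is essentially the same dominated convergence argument as the paper's; the only cosmetic difference is that the paper splits off the finitely many indices with $|c_i|>1$ and applies domination (with the bound $\frac{1}{p_n}|c_i|^{p_n}\leq |c_i|$) only on the tail, whereas you absorb those large terms into a single summable dominating sequence via $|x|^{p}\leq |x|+|x|^2$ for $p\in[1,2]$. Both routes are valid and of comparable length.
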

\begin{proof}
Since $c \in \ell^1$ we have $\sum_i |c_i| < \infty$. Consider $I = \{ i \, : \, |c_i|>1\}$: it is clear that card$(I) < \infty$. 

Observe that 
\begin{equation}
\frac{1}{p_n} \norm{c}_{\ell^{p_n}}^{p_n} = 
    \sum_{i \in \N} \frac{1}{p_n} |c_i|^{p_n} = 
    \sum_{i \in I} \frac{1}{p_n} |c_i|^{p_n} + 
         \sum_{i \in \N \setminus I} \frac{1}{p_n} |c_i|^{p_n}.
\end{equation}
Since $\frac{1}{p_n} |c_i|^{p_n} \rightarrow |c_i|$ for all $i$, this yields the convergence of the finite sum
\[
\sum_{i \in I} \frac{1}{p_n} |c_i|^{p_n} \rightarrow 
    \sum_{i \in I} |c_i|.
\]
Finally, by using dominated convergence, 
since for $i \in \N \setminus I$ it holds $\frac{1}{p_n} |c_i|^{p_n} \leq |c_i|$, and $\sum_{i \in \N \setminus I} |c_i| \leq \| c \|_{\ell^1} < \infty$, we recover 
\[
\sum_{i \in \N \setminus I} \frac{1}{p_n} |c_i|^{p_n} \rightarrow 
    \sum_{i \in \N \setminus I} |c_i|.
\]
\end{proof}

The previous lemma allows us to prove the $\Gamma$-convergence. 
\begin{theorem} \label{thm:Gamma_conv_p}
Let $\{p_n\}_{n \in \N} \subset (1,2)$ be such that $p_n \searrow 1$ and let $\bR_n = \bR_{p_n}$. Then, the functionals $\bR_n \rightarrow \bR = \bR_1$ in the sense of the $\Gamma$-convergence, with respect to the $\ell^2$ topology: namely, 
\begin{itemize}
    \item[(i)] For all $\{c_n\}_{n \in \N} \subset \ell_2$ and $c \in \ell^2$ such that $c_n \rightarrow c$ in $\ell_2$ we have
    \[
     \bR(c) \leq \liminf_{n \rightarrow \infty} \bR_n(c_n).
    \]
    \item[(ii)] For all $c \in \ell_2$, there exists  $\{c_n\}_{n \in \N} \subset \ell_2$ such that $c_n \rightarrow c$ in $\ell_2$ and 
    \[
    \limsup_{n \rightarrow \infty} \bR_n(c_n) \leq \bR(c).
    \]
\end{itemize}
\end{theorem}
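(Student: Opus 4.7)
The plan is to verify the two standard conditions for $\Gamma$-convergence (liminf and limsup), exploiting the pointwise character of all the functionals involved and the continuity of the map $(x,p)\mapsto |x|^p$ at $p=1$.

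For the limsup (recovery) inequality (ii), I take the constant recovery sequence $c_n = c$ for every $n$. If $c \notin \ell^1$, then $\bR(c)=\infty$ and there is nothing to prove. If $c \in \ell^1$, then $c \in \ell^{p_n}$ for every $n$ (since $\ell^1 \subset \ell^{p_n}$) and lemma~\ref{lem:GammaConvLem2} directly yields
\[
\limsup_{n\to\infty} \bR_n(c) = \lim_{n\to\infty} \frac{1}{p_n}\|c\|_{\ell^{p_n}}^{p_n} = \|c\|_{\ell^1} = \bR(c).
\]

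For the liminf inequality (i), the key observation is the componentwise continuity: if $[c_n]_i \to c_i$ in $\R$ and $p_n \to 1$, then $\tfrac{1}{p_n} |[c_n]_i|^{p_n} \to |c_i|$. Indeed, $\ell^2$-convergence entails pointwise convergence of every coordinate, and the claim follows by writing $|[c_n]_i|^{p_n} = \exp(p_n \log|[c_n]_i|)$ (when $c_i \neq 0$), while the case $c_i=0$ uses that $|[c_n]_i|^{p_n}\leq \max(|[c_n]_i|,|[c_n]_i|^2)\to 0$. Applying Fatou's lemma with respect to the counting measure on $\N$ — which is allowed since all summands $\tfrac{1}{p_n}|[c_n]_i|^{p_n}$ are non-negative — we obtain
\[
\sum_{i \in \N} |c_i| \;=\; \sum_{i\in\N} \liminf_{n\to\infty} \frac{1}{p_n}|[c_n]_i|^{p_n} \;\leq\; \liminf_{n\to\infty} \sum_{i \in \N} \frac{1}{p_n}|[c_n]_i|^{p_n} \;=\; \liminf_{n\to\infty} \bR_n(c_n).
\]
The left-hand side equals $\bR(c)$ whether $c \in \ell^1$ (finite) or not (in which case both sides are $+\infty$, so the inequality is trivially true and in fact forces the liminf to be $+\infty$). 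This proves (i).

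The only mildly delicate point is the pointwise convergence $\tfrac{1}{p_n}|[c_n]_i|^{p_n}\to |c_i|$ uniformly enough in $i$ to apply Fatou; but since Fatou only demands non-negativity and measurability (trivially satisfied on $\N$), no uniformity is needed, and the proof reduces to the two elementary pieces above.
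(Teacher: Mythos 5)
Your proof is correct, and while your treatment of the limsup inequality (ii) coincides with the paper's (constant recovery sequence plus lemma~\ref{lem:GammaConvLem2}), your argument for the liminf inequality (i) takes a genuinely different and more elementary route. The paper proceeds by compactness: assuming the liminf is finite, it extracts a subsequence with $\bR_n(c_n)$ bounded, deduces a uniform bound on $\norm{c_n}_{\ell^{\overline{p}}}$ for each fixed $\overline{p}=p_{\overline{n}}$, invokes weak compactness of $\ell^{\overline{p}}$ and uniqueness of the weak limit to identify it with $c$, applies weak lower semicontinuity of the $\ell^{\overline{p}}$-norm, and finally lets $\overline{n}\to\infty$ via the pointwise convergence lemma. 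You instead observe that $\ell^2$-convergence gives coordinatewise convergence, that $\tfrac{1}{p_n}|[c_n]_i|^{p_n}\to|c_i|$ for each fixed $i$, and then apply Fatou's lemma for the counting measure to the non-negative summands; the identities $\bR_n(c_n)=\sum_i\tfrac{1}{p_n}|[c_n]_i|^{p_n}$ and $\bR(c)=\sum_i|c_i|$ hold in the extended reals in all cases, so the infinite-liminf and $c\notin\ell^1$ cases are absorbed automatically rather than handled separately. Your route is shorter, avoids any compactness or duality machinery, does not even need lemma~\ref{lem:GammaConvLem2} for part (i), and sidesteps a small delicacy in the paper's chain of inequalities (the step $\norm{c_n}_{\ell^{p_n}}\leq\bR_n(c_n)$ requires the norm to be not too small, though the bound is easily repaired); the trade-off is that it relies essentially on the explicit coordinatewise structure of the $\ell^p$-norms, whereas the paper's lower-semicontinuity-plus-compactness scheme is the one that generalizes to functionals without such a pointwise representation.
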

\begin{proof}
To prove (i), we first exclude the case $\liminf_{n \rightarrow \infty} \bR_n(c_n) = \infty$, which would trivially satisfy the inequality. If instead $\liminf_{n \rightarrow \infty} \bR_n(c_n) < \infty$, then there exists a subsequence (still denoted by $\{c_n\}$) such that $\lim_{n \rightarrow \infty} \bR_n(c_n) = \liminf_{n \rightarrow \infty} \bR_n(c_n) < \infty$. As a consequence, such sequence is bounded: let $b>0$ be such that $\bR_n(c_n) < b$. This also implies, by the definition of $\bR_n$, that $c_n \in \ell^{p_n}$. Fix now an index $\overline{n}>0$ and the corresponding $\overline{p} = p_{\overline{n}}$: since $p_n$ is monotonically decreasing, $1< p_n < \overline{p}$ for all $n>\overline{n}$, hence
\[
\| c_n \|_{\ell^{\overline{p}}} \leq \| c_n \|_{\ell^{p_n}} \leq \bR_n(c_n) \leq b.
\]
Therefore, again up to a subsequence, by the weak compactness of $\ell^{\overline{p}}$, there exists $\widetilde{c} \in \ell^{\overline{p}}$ such that $c_n \rightharpoonup \widetilde{c}$ in $\ell^{\overline{p}}$. Since $\overline{p} < 2$, the weak $\ell^{\overline{p}}$ convergence implies the weak $\ell^2$ one, and by the uniqueness of the weak limit we conclude that $\widetilde{c} = c$. As a results, we have that $c \in \ell^{\overline{p}}$ and, by the weak lower semicontinuity of the $\ell^{\overline{p}}$-norm,
\[
\bR_{\overline{n}}(c) \leq \liminf_{n \rightarrow \infty} \bR_{\overline{n}}(c_n) \leq \liminf_{n \rightarrow \infty} \bR_{n}(c_n).
\]
Since this argument is valid for any $\overline{n}$, we conclude that $c \in \ell^1$ and, by applying lemma \ref{lem:GammaConvLem2}, 
\[
\bR(c) = \lim_{\overline{n} \rightarrow \infty} \bR_{\overline{n}}(c) \leq \liminf_{n \rightarrow \infty} \bR_{n}(c_n).
\]
To prove (ii), instead, we first exclude the trivial case $\bR(c) = \infty$. As a consequence, we can assume that $c \in \ell^1$, and therefore $c \in \ell^{p_n}$ for all $n$. Then, it is possible to consider a constant recovery sequence $c_n = c$ $\forall n$ to conclude that, via lemma \ref{lem:GammaConvLem2},
\[
\lim_{n \rightarrow \infty} \bR_n(c_n) = \lim_{n \rightarrow \infty} \bR_n(c) = \bR(c).
\]
\end{proof}

The most relevant consequence of the $\Gamma$-convergence of functionals is the convergence of the respective minimizers. We therefore use the previous result to show the convergence of the $\ell^p$-norm regularized solutions to the $\ell^1$-norm one.
\begin{corollary} \label{cor:minim_converg}
Let $\{p_n\}_{n \in \N} \subset (1,2)$ be such that $p_n \searrow 1$. Then, denoted by $\fdanp$ the solution of \eqref{eq:Funct_depend_p}, we have that
\begin{equation}
\norm{ \fdanp - \fdano}_{X_2} \rightarrow 0.
    \label{eq:Gamma_minim_converg}
\end{equation}
\end{corollary}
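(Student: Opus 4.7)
The plan is to invoke the fundamental theorem of $\Gamma$-convergence, lifting Theorem~\ref{thm:Gamma_conv_p} from the coefficient side to the full objective $\Jdanp$ on the ambient Banach space $X_2$. I first extend each $\Jdanp$ to $X_2$ by setting $R_{p_n}(f) = \bR_{p_n}(Mf)$, so that $\Jdanp(f) = +\infty$ whenever $Mf \notin \ell^{p_n}$, consistently with \eqref{eq:extended_whR}. The data-fidelity term $\tfrac12 \|\Abu f - \gdan\|^2_{V_N}$ is continuous on $X_2$, since $\Abu \colon X_2 \to V_N$ is a bounded linear operator into a finite-dimensional space; the indicator $\iota_{+}$ is l.s.c.\ on $X_2$; neither depends on $n$.

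From Theorem~\ref{thm:Gamma_conv_p} and the identification $X_p \cong \ell^p$ via $M$, I obtain $R_{p_n} \stackrel{\Gamma}{\to} R_1$ on $X_2$. The liminf inequality for the full functional $\Jdanp$ then follows by adding to the $\Gamma$-liminf for $R_{p_n}$ the continuity of the data term and the l.s.c.\ of $\iota_{+}$, while the limsup is realised by the constant recovery sequence $f_n = f$, exploiting Lemma~\ref{lem:GammaConvLem2} to guarantee $R_{p_n}(f) \to R_1(f)$ whenever $R_1(f) < \infty$. Hence $\Jdanp \stackrel{\Gamma}{\to} \Jdano$ on $X_2$.

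Equi-coercivity of the minimising sequence comes from testing $\Jdanp$ against $f=0$, which yields $\Jdanp(\fdanp) \leq \tfrac12 \|\gdan\|^2_{V_N}$ uniformly in $n$, whence $R_{p_n}(\fdanp)$ is bounded and $\|M\fdanp\|_{\ell^{p_n}} \leq K$. Fixing any $\bar p \in (1,2)$ and using the continuous embedding $\ell^{p_n} \hookrightarrow \ell^{\bar p}$ for $n$ large, $\{M\fdanp\}$ is bounded in the reflexive space $\ell^{\bar p}$, hence admits a weakly convergent subsequence $M\fdanp \rightharpoonup c^\star$. The $\Gamma$-liminf, combined with the bound $\Jdanp(\fdanp) \leq \Jdanp(\fdano) \to \Jdano(\fdano)$ (via Lemma~\ref{lem:GammaConvLem2}), forces $c^\star \in \ell^1$; the fundamental theorem of $\Gamma$-convergence then identifies $f^\star$, with $M f^\star = c^\star$, as a minimiser of $\Jdano$, and $\Jdanp(\fdanp) \to \min \Jdano$.

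The main obstacle is the final upgrade to strong $X_2$-convergence, as required by \eqref{eq:Gamma_minim_converg}: the equi-coercive topology for $\Jdanp$ is weak $\ell^{\bar p}$, whereas $\|\cdot\|_{X_2}$ is the strong $\ell^2$-norm, and the embedding $\ell^{\bar p} \hookrightarrow \ell^2$ is not compact. The way around it is to exploit the convergence of energies together with componentwise convergence coming from weak $\ell^{\bar p}$-convergence: the continuity of $\Abu$ into the finite-dimensional space $V_N$ turns weak $\ell^{\bar p}$-convergence into strong convergence of the discrepancy, which combined with $\Jdanp(\fdanp) \to \Jdano(f^\star)$ delivers $R_{p_n}(\fdanp) \to R_1(f^\star)$. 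A Brezis--Lieb-style splitting adapted to the varying exponent $p_n \to 1$, applied to the componentwise convergent sequence $M\fdanp$ (uniformly bounded in $\ell^2$ via $\ell^{p_n} \hookrightarrow \ell^2$), then produces $\|M\fdanp - Mf^\star\|_{\ell^2} \to 0$; selecting $f^\star = \fdano$ by uniqueness of the minimiser (or by a subsequence-selection argument when uniqueness fails) concludes the proof of \eqref{eq:Gamma_minim_converg}.
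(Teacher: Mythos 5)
Your proposal follows the same overall architecture as the paper's proof --- extend the functionals $\Jdanp$ to $X_2$, deduce their $\Gamma$-convergence to $\Jdano$ from theorem~\ref{thm:Gamma_conv_p} together with the stability of $\Gamma$-limits under continuous perturbations, and then invoke the fundamental theorem of $\Gamma$-convergence --- but you are substantially more careful at the final step, and rightly so. The paper compresses that step into a single citation of \cite[theorem 7.4]{Dalmaso12}, tacitly reading it as delivering convergence of minimizers in the strong $X_2$-norm. That theorem, however, only yields convergence in a topology for which the family is equi-coercive, and the sublevel sets of $\bR_{p_n}$ are bounded in $\ell^{\bar p}$ for a fixed $\bar p\in(1,2)$, hence weakly but not strongly precompact in $\ell^2$; so the citation alone gives weak $X_2$-convergence of (subsequences of) $\fdanp$, not \eqref{eq:Gamma_minim_converg} as stated. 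You identify exactly this obstruction and supply the missing upgrade: convergence of the energies $\Jdanp(\fdanp)\to\Jdano(\fdano)$ together with convergence of the discrepancy forces $R_{p_n}(\fdanp)\to R_1(\fdano)$, and a Brezis--Lieb-type splitting with varying exponent applied to the componentwise convergent sequence $M\fdanp$ then gives $\|M\fdanp - M\fdano\|_{\ell^{p_n}}\to 0$, hence $\|\fdanp-\fdano\|_{X_2}\to 0$ since $p_n\le 2$. This makes your argument more complete than the paper's; the two points still to be fleshed out are (i) the uniform-in-$p$ elementary inequality underlying the variable-exponent Brezis--Lieb step (available because $p_n$ ranges in a compact subset of $[1,2]$), and (ii) the uniqueness of the minimizer $\fdano$, since the $p=1$ functional is convex but not strictly convex --- a genuine issue already implicit in the statement, which you correctly handle via a subsequence-selection argument. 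A last minor caveat: your passage from weak convergence to strong convergence of the residual uses that $V_N$ is finite-dimensional, which holds in the paper's tomographic application but not in the abstract setting where $V$ is an arbitrary Hilbert space; there one should instead argue via weak lower semicontinuity of both terms plus convergence of the total energy.
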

\begin{proof}
We first notice that the functionals $\Jdanp$ (suitably extended to $\infty$ outside $X_p$) $\Gamma$-converge to $\Jdano$ (suitably extended to $\infty$ outside $X_1$) with respect to the $X_2$ topology. This is a direct consequence of the $\Gamma$-convergence of the functionals $\bR_p$ proved in theorem \ref{thm:Gamma_conv_p}, and of the fact that the remaining terms in $\Jdanp$ are independent on $p$ and continuous with respect to $f$ (see \cite[remark 1.7]{Braides02}). Then, the convergence of the minimizer with respect to the $X_2$ norm is a direct consequence of the $\Gamma$-convergence of the functionals $\Jdanp$ (see \cite[theorem 7.4]{Dalmaso12}).
\end{proof}

Corollary~\ref{cor:minim_converg} colud be used to deduce theoretical convergence rates of $\fdano$ to $f^\dag$, using as a metric the (expected) error in $\ell^2$-norm. Nevertheless, this would require to quantify \eqref{eq:Gamma_minim_converg}, providing a convergence rate and an explicit dependence of the constants with respect to $N$, as well as an explicit expression of the omitted constants in theorems~\ref{thm:general_rate_nonneg} and~\ref{thm:general_rate_shearlet} as a function of $p$. This is beyond the scope of this work, and is left for future investigation. 

On the other hand, the result in corollary~\ref{cor:minim_converg} can be considered from a purely practical perspective. Indeed, according to \eqref{eq:Gamma_minim_converg}, in any numerical setup (with a prescribed maximum sample size $N_{\max}$), it is possible to find a value of $p$ such that the regularized solutions $\fdanp$ are equal to $\fdano$ up to a desired tolerance, also inheriting the expected decay of the Bregman distance associated with the $p$ case.

\section{Discussion and outlook}
\label{sec:conclusions}

In this paper, we extended the theoretical and numerical results of~\cite{Bubba21} to the framework of shearlet-based regularization with $p \in (1,2)$, possibly constrained to the non-negative orthant. In particular, we proved that also in this case we obtain the same convergence rates on the Bregman distance, that is, $\E [D_{\wtR}(f^\delta_{\alpha,N}, f^\dag)]$ decays as $N^{-1/3}$ in the fixed noise scenario, and as $N^{-1}$ in the decreasing noise one. Also, we verified numerically the expected decays in the case of simulated tomographic data with randomly sampled imaging angles. For the $p=1$ case, we carried out a numerical study, gathering empirical evidence that we can expect the same theoretical bounds on the Bregman distance as in the $p \in (1,2)$ case. Finally, we used the tools of $\Gamma$-convergence to gain further insight in the $p=1$ case by approaching it as the limit case $(1,2) \ni p \rightarrow 1$.

The rigorous theoretical analysis of the $p=1$ case is left as future work. Clearly, proving convergence rates for sparsity enforcing regularization with respect to non-tight frames entails some theoretical burden, since we loose nice properties such as the orthonormality of the sparsity transform and the smoothness of the penalty term. Therefore, this raises the question whether this is a worthy problem to investigate.  

\begin{figure}
    \centering
    \begin{tabular}{@{}c@{\qquad}c@{}}
        Plant
        & Lotus root \\ 
        \includegraphics[width=0.4\textwidth]{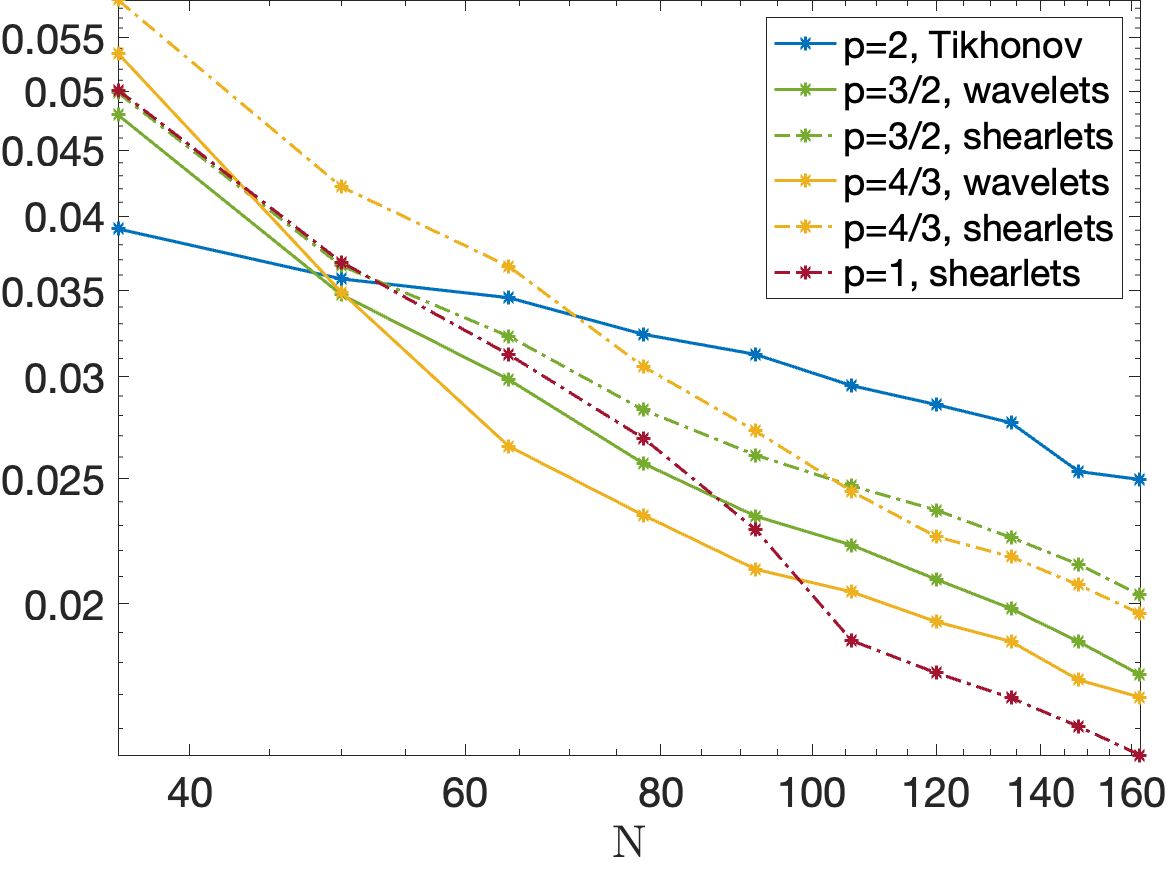} 
        & \includegraphics[width=0.4\textwidth]{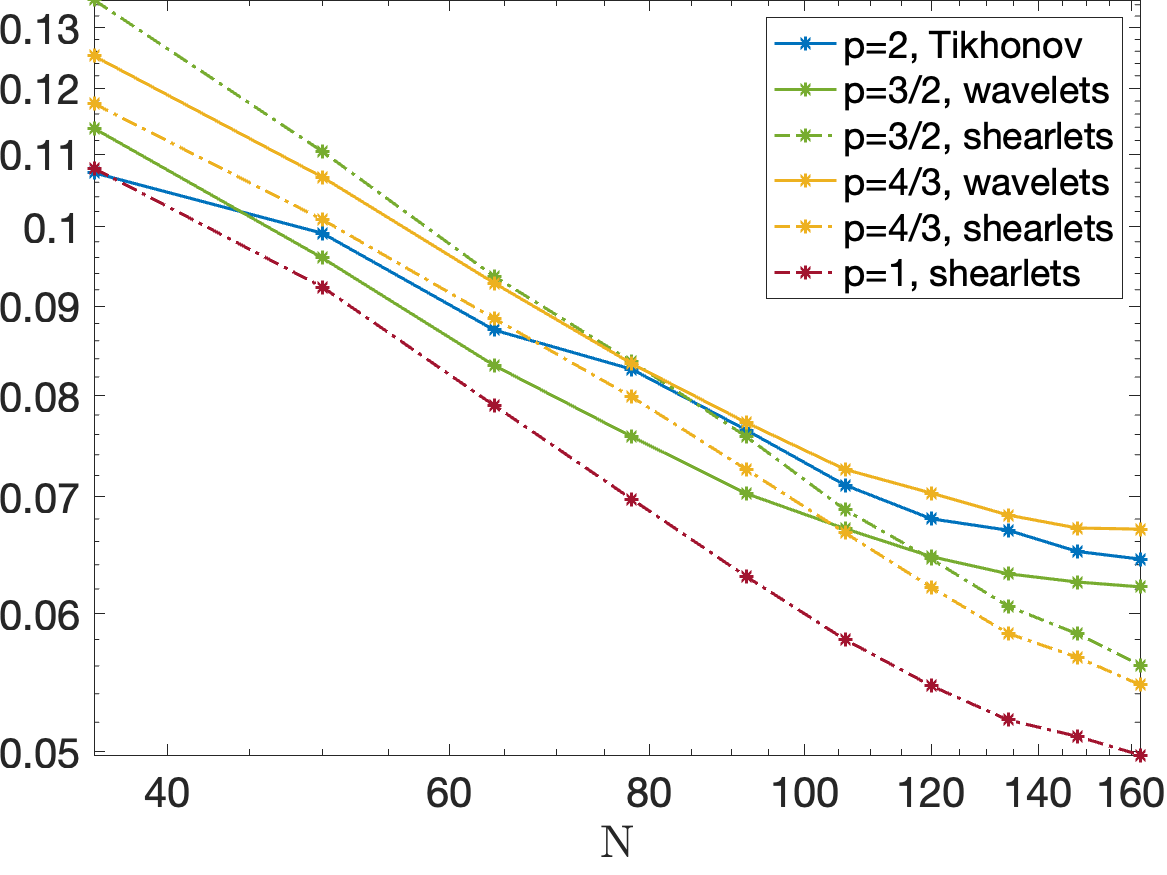} \\
        (a) & (b)
    \end{tabular}
    \caption{Approximate decay of the expected value of the squared relative error, in the fixed noise regime and for different regularization strategies (\ie{}, different values of $p$). (a) Simulated data of a plant phantom. (b) Measured data of a lotus root.}
    \label{fig:Fixed_Comparison}
\end{figure}

To this end, in figure~\ref{fig:Fixed_Comparison} we want to compare the decay of the expected value of the (squared) relative error, that is, $\E \big[\|\f^\dag - \f^\delta_{\alpha,N}\|^2_2/\|\f^\dag\|^2_2 \big]$, for  different regularization strategies. The goal is to show that for $p \rightarrow 1$, and for multiresolution systems which provably yield sparser approximations, the expected value of the relative error gets appreciably smaller, especially when the number of angles increases. To showcase that the plots do not depend on the particular choice of the data, in addition to the simulated plant data used throughout this work, we furthermore make use of real data from a scan of a lotus root~\cite{Bubba16}. In this case, the measured data are corrupted with additional Gaussian noise, using the same strategy adopted for corrupting the simulated data and a noise level $c_\delta$ three times higher than the one described in subsection~\ref{ssec:NonnegConstr_numerics} for simulated data. As usual, the sample averages are computed using 30 random realizations.
For the error computations we use as ground truth a reconstruction of the target using dense angular sampling and $\Npxl = 128^2$; the operator $\RadonD_{\thetab}$ and its adjoint are computed with ASTRA~\cite{VanAarle16,VanAarle15}, which allows to draw random imaging angles within the fan-beam geometry setting. All the reconstructions are computed using either VMILA, for $p=3/2,4/3$ (see equations~\eqref{eq:VMILAiter} and~\eqref{eq:VMILADualObjConstrained} in Appendix~\ref{app:VMILA}) and $p=1$ (see equations~\eqref{eq:VMILAiter} and~\eqref{eq:VMILADualProbl_l1constrained} in Appendix~\ref{app:VMILA}), or SGP, for $p=2$ (see equations~\eqref{eq:ObjFunctSGP}-\eqref{eq:SGPiter} in appendix~\ref{ssec:VMILAnonneg}). 
In particular, we focus on the fixed noise regime and different values of $p \in [1,2]$, considering both wavelets and shearlets for the sparsifying transform: the error decay for the simulated case are reported in figure~\ref{fig:Fixed_Comparison}(a), while for the measured case in figure~\ref{fig:Fixed_Comparison}(b).
The case $p=2$ corresponds to classical Tikhonov regularization, namely, in this case $\Mop = \mathbbm{1}_{\Npxl}$. 
Notice that, in figure~\ref{fig:Fixed_Comparison}(a), for $p=3/2, \, 4/3, \, 1$, these are the relative errors corresponding to the Bregman distances reported in figures~\ref{fig:Plant_wavelets_appSC}, \ref{fig:Plant_shearlets_appSC_NoWeights} and  \ref{fig:Plant_shearlets_p1_NoWeights}, respectively. 
In both cases we can see that, as $N$ increases, the strategy yielding the lowest reconstruction error is shearlet-based regularization with $p=1$, while the highest is essentially given by Tikhonov regularization with $p=2$. The decay rates for $p=3/2$ and $p=4/3$, and with both wavelets and shearlets, fall within the region delimited by the $p=1$ and $p=2$ curves. 
Similar plots can be obtained for the decreasing noise regime, even though the difference between regularization strategies is less noticeable since the noise level is reducing.  These plots are very informative also in view of practical applications, and provide some quantitative insight to relations between sparsity and sufficient (under)sampling. Indeed, depending on the chosen regularization strategy (\ie{}, the value of $p$) one needs a different amount of imaging angles to achieve a fixed value of the error. 
In particular, the plots suggest that with $p=1$ one needs less data (that is, fewer imaging angles) to achieve a much better reconstruction, according to the error metric.
Ultimately, this supports the much advocated paradigm that sparsity-enforcing penalties provide a viable alternative to the usual quadratic ones for the regularization of undersampled ill-posed problems, and gives motivation to this work.

\section*{Acknowledgments}
The authors would like to thank the Isaac Newton Institute for Mathematical Sciences, Cambridge, for support and hospitality during the programme ``Mathematics of Deep Learning'' where work on this paper was undertaken. This work was supported by EPSRC grant no EP/R014604/1. TAB is supported by the Royal Society through the Newton International Fellowship grant n. NIF\textbackslash R1\textbackslash 201695 and was partially supported by the Academy of Finland through the postdoctoral grant, decision number 330522. LR is supported by the Air Force Office of Scientific Research under award number FA8655-20-1-7027.
The authors would also like to thank Tapio Helin and Martin Burger for introducing them to the fascinating field of statistical inverse learning problems.

\begin{appendix}
\section{Implementing VMILA with $1 \leq p < 2$}
\label{app:VMILA}

To solve the minimization problem 
\begin{equation}
\argmin_{\f \in \R^{\Npxl}} \bigg\{ \frac{1}{2}
    \norm{\RadonD_{\thetab} \f - \gNd}_2^2 + 
    \frac{\alpha}{p} \norm{\Mop \f}_p^p \bigg\}
\label{eq:MinProblDiscr}    
\end{equation}
we use the variable metric inexact line-search algorithm (VMILA)~\cite{Bonettini16}. 
Here, $\Mop$ represents either the wavelet transform operator, that is, $\Mop = \Wop \in \R^{\Npxl \times \Npxl}$ with $\sigma=1$, or the shearlet transform operator, that is, $\Mop = \shD \in \R^{\sigma\Npxl \times \Npxl}$ where $\sigma > 1$ is the number of subbands (\ie{}, $\sigma\Npxl=\Nsh$). Now, denoted by  
$\Gamma_0(\f) = \frac{1}{2} \norm{\RadonD_{\thetab} \f - \gNd}_2^2$
and $\Gamma_1(\f) = \frac{\alpha}{p} \norm{\Mop \f}^p_p$, the $(k+1)$-th iteration of VMILA for the minimization of~\eqref{eq:MinProblDiscr} is given by:
\begin{equation}
\f^{(k+1)} = \f^{(k)} + \mu_k ( \vi^{(k)} - \f^{(k)})
\label{eq:VMILAiter}
\end{equation}
where the steplength $\mu_k$ is determined by means of a backtracking loop until a modified Armijo inequality is satisfied and
\begin{equation}
\vi^{(k)} = \prox^{D_k}_{\lambda_k\Gamma_1} (\z^{(k)}) 
\label{eq:VMILAprox}    
\end{equation}
with $\z^{(k)} = \f^{(k)} - \lambda_k D_k^{-1} \nabla \Gamma_0(\f^{(k)})$. Here, $\lambda_k$ is a steplength and $D_k$ a scaling matrix at iteration $k$. In particular, $\lambda_k$ is chosen in the closed interval $[\lambda_{\min}, \lambda_{\max}] \subset \R_+$ according to an adaptive Barzilai-Borwein rule~\cite{Barzilai88,Frassoldati08}. The scaling matrix $D_k$ is chosen in the compact set $\mathcal{D}_L$, where $\mathcal{D}_L$ is the set of the symmetric positive definite matrices $D$ such that $\norm{D} \leq L$ and $\norm{D^{-1}} \leq L$, for a given threshold $L > 1$. The  entries of the diagonal scaling matrix $D = \diag(d_i)$ are given by
\[
d_i^{(k)} = \min \bigg\{L, \max \bigg\{ \frac{1}{L}, \frac{\f_i^{(k)}}{\RadonD_{\thetab}^T \big(\RadonD_{\thetab} \f_i^{(k)} \big)} \bigg\} \bigg\}
\]
following the strategy in~\cite{Bonettini16}.

When $\Gamma_1$ is given by the composition of a $p$-norm with a linear operator, as it is in our case, it is not possible to compute the proximity operator in~\eqref{eq:VMILAprox} in a closed-form. However, it is possible to compute an approximation $\widetilde{\vi}^{(k)}$ of $\vi^{(k)}$: we follow the strategy of the $\eta$-approximation introduced in~\cite{Bonettini16}. There, the authors show that 
\begin{equation}
\vi^{(k)} = \prox^{D_k}_{\lambda_k\Gamma_1} (\z^{(k)}) 
    = \max_{\nu \in \R^{\sigma \Npxl}} H(\nu, \f^{(k)})
\label{eq:VMILAprox2}    
\end{equation}
where 
\begin{equation}
H(\nu, \f^{(k)}) = 
    - \frac{1}{2\lambda_k} \norm{\lambda_k D_k^{-1} \Mop^T \nu - \z^{(k)}}^2_{D_k} - \widehat{\Gamma}_1^\star(\nu) - \Gamma_1(\f^{(k)})
    - \frac{\lambda_k}{2} \norm{\nabla \Gamma_0(\f^{(k)})}^2_{D_k^{-1}} + \frac{1}{2\lambda_k}
    \norm{\z^{(k)}}^2_{D_k}
\label{eq:VMILADualObj}    
\end{equation}
with $\Gamma_1(\f) = \widehat{\Gamma}_1(\Mop \f)$ \ie{}, $\widehat{\Gamma}_1 = \frac{\alpha}{p} \norm{\cdot}_p^p$. Here, $\nu \in \R^{\sigma \Npxl}$ is the dual variable whose primal is $\vi \in \R^{\Npxl}$ and $\widehat{\Gamma}_1^\star$ is the conjugate function of $\widehat{\Gamma}_1$. In particular, we have:
\begin{itemize}
    \item $1 < p < 2$: $\widehat{\Gamma}_1^\star$ is the $q$-th power of the dual norm, \ie{}, $\widehat{\Gamma}_1^\star(\nu) = \frac{\alpha^{1-q}}{q} \norm{\nu}_q^q$, with $q$ H\"{o}lder conjugate of $p$ (for example, $q=3$ when $p=\frac{3}{2}$, $q=4$ when $p=\frac{4}{2}$, $q=11$ when $p=1.1$ and $q=101$ when $p=1.01$). In this case, the problem in~\eqref{eq:VMILAprox2}-\eqref{eq:VMILADualObj} reads as:
    \begin{equation}
    \begin{split}
    \vi^{(k)} = \max_{\nu \in \R^{\sigma \Npxl}} \; &H(\nu, \f^{(k)})    
    \qquad \text{with} \\[0.25em] 
    H(\nu, \f^{(k)}) = 
    - \frac{1}{2\lambda_k} \norm{\lambda_k D_k^{-1} \Mop^T \nu - \z^{(k)}}^2_{D_k} &- \frac{\alpha^{1-q}}{q} \norm{\nu}_q^q 
    - \Gamma_1(\f^{(k)})
    - \frac{\lambda_k}{2} \norm{\nabla \Gamma_0(\f^{(k)})}^2_{D_k^{-1}} + \frac{1}{2\lambda_k}
    \norm{\z^{(k)}}^2_{D_k}
    \end{split}    
    \label{eq:VMILADualProbl_lp}    
    \end{equation}
    \item $p=1$: $\widehat{\Gamma}_1^\star$ is the indicator function of the set $B^{\sigma \Npxl}_{\infty}(0,\alpha) = B_{\infty}(0,\alpha) \times \ldots \times B_{\infty}(0,\alpha)$ ($\sigma \Npxl$-times), being $B_{\infty}(0,\alpha) \subset \R$ the ball in the $\infty$-norm centered in $0$ with radius $\alpha$.
    Therefore, the problem in~\eqref{eq:VMILAprox2}-\eqref{eq:VMILADualObj} becomes the following constrained problem:
    \begin{equation}
    \begin{split}
    \vi^{(k)} = \max_{\norm{\nu}_{\infty} \leq \alpha} \; &H(\nu, \f^{(k)})    
    \qquad \text{with} \\[0.25em] 
    H(\nu, \f^{(k)}) = 
    - \frac{1}{2\lambda_k} \norm{\lambda_k D_k^{-1} \Mop^T \nu - \z^{(k)}}^2_{D_k} &- \Gamma_1(\f^{(k)})
    - \frac{\lambda_k}{2} \norm{\nabla \Gamma_0(\f^{(k)})}^2_{D_k^{-1}} + \frac{1}{2\lambda_k}
    \norm{\z^{(k)}}^2_{D_k}
    \end{split}    
    \label{eq:VMILADualProbl_l1}    
    \end{equation}
\end{itemize}
To solve both~\eqref{eq:VMILADualProbl_l1} and~\eqref{eq:VMILADualProbl_lp} we use the scaled gradient projection (SGP) algorithm~\cite{Bonettini09} (see equations~\eqref{eq:ObjFunctSGP}-\eqref{eq:SGPiter} in appendix~\ref{ssec:VMILAnonneg}) stopping the iterations $l$, for a given $\eta \in (0,1]$, when 
\begin{equation}
h(\widetilde{\vi}^{(k,l)}, f^{(k)}) \leq \eta H(\nu^{(l)}, f^{(k)}),
\label{eq:VMILAinnerStop}    
\end{equation}
where $\widetilde{\vi}^{(k,l)} = \widetilde{\vi}^{(k)}$ and $h$ is such that 
\[
\min_{\vi \in \R^{\Npxl}} h(\vi, f^{(k)}) = 
    \max_{\nu \in \R^{\sigma \Npxl}} H(\nu,f^{(k)}).
\]

\subsection{Adding the non-negativity constaint} 
\label{ssec:VMILAnonneg}
VMILA can be easily modified to include also the non-negativity constraint. The starting point is recasting~\eqref{eq:MinProblDiscr} to include the indicator function $\iota_{\R^{\Npxl}_+}$ of the feasible region, \ie{}, the non-negative orthant $\R^{\Npxl}_+$:
\begin{equation}
\argmin_{\f \in \R^{\Npxl}} \bigg\{ \frac{1}{2}
    \norm{\RadonD_{\thetab} \f - \gNd}_2^2 + \frac{\alpha}{p} \norm{\Mop \f}_p^p + 
    \iota_{\R^{\Npxl}_+}(\f)
    \bigg\},
\label{eq:MinProblDiscrConstrained}    
\end{equation}
where we now have $\Gamma_1(\f) = \frac{\alpha}{p} \norm{\Mop \f}_p^p + \iota_{\R^{\Npxl}_+}(\f)$ (and $\Gamma_0(\f) = \frac{1}{2} \norm{\RadonD_{\thetab} \f - \gNd}_2^2$). As a consequence, when $1 < p < 2$ equation~\eqref{eq:VMILADualProbl_lp} reads as:
\begin{equation}
    \begin{split}
    \vi^{(k)} = \max_{\nu \in \R^{\sigma \Npxl}} \; &H(\nu, \f^{(k)})    
    \qquad \text{with} \\[0.25em]
H(\nu, \f^{(k)}) = 
    - \frac{1}{2\lambda_k} \norm{\lambda_k D_k^{-1} \mbB^T \nu - \z^{(k)}}^2_{D_k} -& \widehat{\Gamma}_1^\star(\nu) - \Gamma_1(\f^{(k)})
    - \frac{\lambda_k}{2} \norm{\nabla \Gamma_0(\f^{(k)})}^2_{D_k^{-1}} + \frac{1}{2\lambda_k}
    \norm{\z^{(k)}}^2_{D_k}
\end{split}    
\label{eq:VMILADualObjConstrained}    
\end{equation}
where the matrix $\mbB$ is a block matrix $\mbB = [\mbB_1^T \; \mbB_2^T]^T$. The first block $\mbB_1 \in \R^{\sigma \Npxl \times \Npxl}$ corresponds to the $p$-norm term while the second one, $\mbB_2 \in \R^{\Npxl \times \Npxl}$, accounts for the indicator function:
\begin{equation}
 \mbB \; = \; \begin{bmatrix}
\Mop \\ \mathbbm{1}_{\Npxl}
\end{bmatrix} 
\quad \in \R^{(\sigma \Npxl+\Npxl) \times \Npxl}.  
\label{eq:MatrixBVMILA}
\end{equation}

Accordingly, also the dual variable is split into blocks, namely, $\nu = [\nu_1^T \; \nu_2^T]^T \in \R^{\sigma \Npxl+\Npxl}$, with $\nu_1 \in \R^{\sigma \Npxl}$ and $\nu_2 \in \R^{\Npxl}$. Next, $\widehat{\Gamma}_1^\star = g_1^\star(\nu_1)+g_2^\star(\nu_2)$, where $g_1^\star(\nu_1) = \frac{\alpha^{1-q}}{q} \norm{\nu_1}_q^q$ is the conjugate function of $g_1 = \frac{\alpha}{p} \norm{\cdot}_p^p$, and
$g_2^\star$ is the indicator function of the set $\R^{\Npxl}_{-}$. 

When $p=1$ equation~\eqref{eq:VMILADualProbl_l1} reads as
\begin{equation}
    \begin{split}
    \vi^{(k)} = \max_{\norm{\nu_1}_{\infty} \leq \alpha, \, \nu_2 \in \R^{\Npxl}_{-}} \; &H(\nu, \f^{(k)})    
    \qquad \text{with} \\[0.25em] 
    H(\nu, \f^{(k)}) = 
    - \frac{1}{2\lambda_k} \norm{\lambda_k D_k^{-1} \mbB^T \nu - \z^{(k)}}^2_{D_k} &- \Gamma_1(\f^{(k)})
    - \frac{\lambda_k}{2} \norm{\nabla \Gamma_0(\f^{(k)})}^2_{D_k^{-1}} + \frac{1}{2\lambda_k}
    \norm{\z^{(k)}}^2_{D_k}
    \end{split}    
\label{eq:VMILADualProbl_l1constrained}    
\end{equation}
where $\mbB$ is as in~\eqref{eq:MatrixBVMILA}. The conjugate function is given by $\widehat{\Gamma}_1^\star = g_1^\star(\nu_1)+g_2^\star(\nu_2)$, where $g_2^\star$ is again the indicator function of the set $\R^{\Npxl}_{-}$ and $g_1^\star(\nu_1)$ is the indicator function of the set $B^{\sigma \Npxl}_{\infty}(0,\alpha) = B_{\infty}(0,\alpha) \times \ldots \times B_{\infty}(0,\alpha)$ ($\sigma \Npxl$-times), being $B_{\infty}(0,\alpha) \subset \R$ the ball in the $\infty$-norm centered in $0$ with radius $\alpha$. Finally, to preserve feasibility (see~\cite[section 4.3]{Bonettini16}), we consider the sequence generated by projecting the corresponding primal sequence $\widetilde{\vi}^{k,l}$ onto the feasible set $\R^{\Npxl}_{+}$.

In all our experiments, we fix $\eta = 10^{-5}$, $\lambda_{\min} = 10^{-5}, \, \lambda_{\max} = 10^5$ and $\lambda_0 = 1.3$. The threshold $L$ for the scaling matrix $D_k$ has been set equal to $10^{10}$. We initialize the algorithm with $\f^{(0)} = 0$ and the inner loop of VMILA uses ``warm restart'' (after the very first iteration where the initial guess is $\vi^{(0)}=0$).
Computations were implemented with Matlab R2021a, running on a laptop with 16GB RAM and Apple M1 chip.

\begin{remark}
\label{rm:SGP}
Notice that in the case $1<p<2$ another choice for the functionals $\Gamma_0, \, \Gamma_1$ is possible, namely, $\Gamma_0(\f) = \frac{1}{2} \norm{\RadonD_{\thetab} \f - \gNd}_2^2 + \frac{\alpha}{p} \norm{\Mop \f}^p_p$
and $\Gamma_1(\f) = \iota_{\R^{\Npxl}_+}(\f)$ (or, $\Gamma_1(\f) = 0$ in the unconstrained case). In fact, we are dealing with a smooth, possibly constrained problem:
\begin{equation}
\argmin_{\f \in \Omega_{\f}} \; \Gamma(\f) := 
 \argmin_{\f \in \Omega_{\f}} \; \bigg\{ \frac{1}{2} \norm{\RadonD_{\thetab} \f - \gNd}_2^2 + \frac{\alpha}{p} \norm{\Mop \f}^p_p \bigg\}
\qquad \text{with } \; p \in (1,2) 
\label{eq:ObjFunctSGP}    
\end{equation}
where $\Omega_{\f}$ is either the feasible region $\R^{\Npxl}_+$ (constrained formulation) or $\R^{\Npxl}$ (unconstrained formulation).
In this case, and generally whenever the objective function is smooth, VMILA concides with SGP~\cite{Bonettini09} and no inner loop for the computation of the proximal operator is needed. Indeed, the $(k+1)$-th iteration of SGP, when the objective function reads as in~\eqref{eq:ObjFunctSGP}, is given by:
\begin{equation}
\label{eq:SGPiter}
\f^{(k+1)} = \f^{(k)} + \mu_k ( \mcP_{\Omega_{\f}}^{D_k^{-1}} \big( \f^{(k)} - \lambda_k D_k \nabla \Gamma (\f^{(k)}) \big) - \f^{(k)})
\end{equation}
where the projection $\mcP_{\Omega_{\f}}^{D_k^{-1}}$ onto the feasible set $\Omega_{\f}$  with respect to the metric induced by the scaling matrix $D_k$ replaces the proximal operator. Also for SGP we can apply the same updating rules for the steplength $\lambda_k$ and the scaling matrix $D_k$ exploited by VMILA.

In all tests reported in this paper, nonetheless, we always chose to follow the strategy presented at the beginning of the appendix, namely, $\Gamma_0(\f) = \frac{1}{2} \norm{\RadonD_{\thetab} \f - \gNd}_2^2$
and $\Gamma_1(\f) = \frac{\alpha}{p} \norm{\Mop \f}^p_p+\iota_{\R^{\Npxl}_+}(\f)$. While it might seem counterintuitive choosing to treat the case $1<p<2$ as the non-smooth case $p=1$, in our experience this turned out to be numerically more stable and computationally faster than having to compute a gradient updated involving the transform $\Mop$, its adjoint and the signed power operation, especially when $\Mop$ is the shearlet operator. In fact, even if the price to pay is to have an inner loop for the approximate computation of the proximity operator, the stopping criterion~\eqref{eq:VMILAinnerStop} for the inner loop of all tests reported here is met in just one iteration.
\end{remark}
\end{appendix}

\bibliographystyle{amsplain}

\bibliography{IPETbiblio}

\end{document}